\documentclass[11pt,twoside]{article}
\usepackage[utf8]{inputenc}

\usepackage{colortbl}

\usepackage{graphicx}
\usepackage[active]{srcltx}

\usepackage{amsmath,amssymb}
\usepackage{bm}
\usepackage{amsthm}

\usepackage[all,2cell]{xy}
\usepackage{mathrsfs}
\usepackage{mathtools}
\mathtoolsset{showonlyrefs}

\usepackage{tikz-cd}
\usetikzlibrary{cd}

\usepackage{xcolor}

\usepackage[shortlabels]{enumitem}
\setlist[itemize,enumerate]{
    itemsep=0.5ex,     
    topsep=3pt,    
    partopsep=3pt,  
    parsep=0pt
}

\usepackage[a4paper, left=25mm, right=25mm, top=25mm, bottom=25mm]{geometry}

\usepackage{marginnote}

\usepackage{imakeidx}
\makeindex[intoc]

\usepackage[nottoc]{tocbibind}
\usepackage[colorinlistoftodos]{todonotes} 

\usepackage[colorlinks=true,pagebackref=false]{hyperref}
\hypersetup{urlcolor=blue, citecolor=red, linkcolor=blue}

\theoremstyle{plain}
\newtheorem{theorem}{Theorem}[section]
\newtheorem{proposition}[theorem]{Proposition}
\newtheorem{corollary}[theorem]{Corollary}
\newtheorem{lemma}[theorem]{Lemma}

\theoremstyle{definition}
\newtheorem{definition}[theorem]{Definition}
\newtheorem{remark}[theorem]{Remark}
\newtheorem{example}[theorem]{Example}

\AtBeginEnvironment{remark}{%
  \pushQED{\qed}%
}
\AtEndEnvironment{remark}{\popQED\endexample}

\AtBeginEnvironment{example}{%
  \pushQED{\qed}%
}
\AtEndEnvironment{example}{\popQED\endexample}

\newcommand\restr[2]{{
  \left.\kern-\nulldelimiterspace 
  #1 
  \right|_{#2} 
}}

\newcommand{\R}{\mathbb{R}}

\renewcommand{\d}{\mathrm{d}}

\newcommand{\Cinfty}{\mathscr{C}^\infty}
\newcommand{\T}{{\mathrm T}}
\let\Tan\T
\newcommand{\cT}{\T^{\ast}}
\newcommand{\Id}{\mathrm{Id}}

\newcommand*{\inn}[1]{\iota_{#1}}

\newcommand{\Lie}{\mathscr{L}}

\newcommand{\X}{\mathfrak{X}}

\newcommand{\parder}[2]{\frac{\partial #1}{\partial #2}}
\newcommand{\dparder}[2]{\dfrac{\partial #1}{\partial #2}}

\DeclareMathOperator{\Ima}{Im}
\DeclareMathOperator{\rk}{rank}
\DeclareMathOperator{\cl}{cl}

\DeclareMathOperator{\codim}{codim}

\DeclareMathAlphabet{\mathpzc}{OT1}{pzc}{m}{it}
\def\d{\mathrm{d}}

\newcommand\dfn[1]{{\textbf{\emph{#1}}}}

\newcommand{\fracp}[2]{\frac{\partial #1}{\partial #2}}

\def\X{\mathfrak{X}}
\def\d{\mathrm{d}}

\def\Im{\mathrm{Im \,}}
\def\Ker{\mathrm{Ker \,}}
\def\Tan{\mathrm{T}}
\def\Lie{\mathscr{L}}

\def\K{\mathcal{K}}
\def\longto{\longrightarrow}

\usepackage{fancyhdr}

\begin{document}
\parskip=3pt

{\huge\sffamily\raggedright
Pairs of differential forms:\\[10pt]
a framework for precontact geometry
}

\vspace{1em}

{\large\raggedright\sffamily
4 February 2026
}

\vspace{1em}

{\Large\raggedright\sffamily
    Xavier Gràcia
}\vspace{1mm}\newline
{\raggedright
    Dept.\ of Mathematics, Universitat Politècnica de Catalunya, Barcelona\\
    e-mail: 
    \href{mailto:xavier.gracia@upc.edu}{xavier.gracia@upc.edu} --- {\sc orcid}: 
    \href{https://orcid.org/0000-0003-1006-4086}{0000-0003-1006-4086}
}

\medskip

{\Large\raggedright\sffamily
    Ángel Martínez-Muñoz
}\vspace{1mm}\newline
{\raggedright
    Dept.\ of Computer Engineering and Mathematics, Universitat Rovira i Virgili, Tarragona\\
    e-mail: \href{mailto:angel.martinezm@urv.cat}{angel.martinezm@urv.cat} --- {\sc orcid}: \href{https://orcid.org/0009-0002-8944-5403}{0009-0002-8944-5403}
}

\medskip

{\Large\raggedright\sffamily
    Xavier Rivas
}\vspace{1mm}\newline
{\raggedright
    Dept.\ of Computer Engineering and Mathematics, Universitat Rovira i Virgili, Tarragona\\
    e-mail: \href{mailto:xavier.rivas@urv.cat}{xavier.rivas@urv.cat} --- {\sc orcid}: \href{https://orcid.org/0000-0002-4175-5157}{0000-0002-4175-5157}
}

\vspace{.7em}

{\large\bf\raggedright
    Abstract
}\vspace{1mm}\newline
{\parindent 0pt
Precontact manifolds extend contact geometry by weakening the maximal non-integrability condition of the defining $1$-form. 
We clarify the geometric foundations of this structure by studying general pairs of a $1$-form and a $2$-form under mild regularity conditions. 
We characterize them through their class, analyse the role of distinguished vector fields, such as Reeb or Liouville fields, and study other associated geometrical objects. 
Precontact structures are then treated as the special case of pairs formed by a nowhere-vanishing $1$-form and its exterior derivative. 
We also define Hamiltonian dynamics on precontact manifolds.
Several examples are presented to illustrate the theory.
}
\smallskip

{\large\bf\raggedright
    Keywords:
}
class of a differential form, contact structure, precontact form, Reeb vector field, Liouville vector field, conformal transformation, presymplectic and cosymplectic geometry
\medskip

{\large\bf\raggedright
    MSC2020 codes:
}
53D10,
53D05,
58A10,
37J39

\medskip


{\setcounter{tocdepth}{2}
\def\baselinestretch{0.79}
\small
\def\addvspace#1{\vskip 1pt}
\parskip 0pt plus 0.1mm
\tableofcontents
}

\newpage

\pagestyle{fancy}

\fancyhead[LE]{Pairs of differential forms: a framework for precontact geometry} 
\fancyhead[RE]{}
\fancyhead[RO]{X. Gràcia, À. Martínez-Muñoz and X. Rivas} 
\fancyhead[LO]{}    

\fancyfoot[L]{}     
\fancyfoot[C]{\thepage}                  
\fancyfoot[R]{}            

\setlength{\headheight}{17pt}

\renewcommand{\headrulewidth}{0.1pt}  
\renewcommand{\footrulewidth}{0pt}    

\renewcommand{\headrule}{%
    \vspace{3pt}                
    \hrule width\headwidth height 0.4pt 
    \vspace{0pt}                
}

\setlength{\headsep}{30pt}  

\section{Introduction}

In the late 19th century, Sophus Lie introduced the concept of contact transformations, a class of transformations that preserve the solution sets of certain differential equations. 
His work laid the groundwork for the modern theory of contact geometry, which has since found applications in a wide range of fields, including geometric optics, thermodynamics, Hamiltonian dynamics, and fluid mechanics. 
For a detailed historical account of the development of contact geometry and topology, as well as a thorough bibliography, see~\cite{Gei_01}.

A \emph{contact distribution} on a $(2n{+}1)$-dimensional manifold is a maximally non-integrable hyperplane field. 
Locally, this hyperplane field can be expressed as the kernel of a so-called \emph{contact form}~$\eta\in \Omega^1(M)$ satisfying the condition
\[
\eta \wedge (\d\eta)^n \neq 0
\]
at every point. 
This condition ensures that the distribution defined by $\Ker\eta$ is as far from being integrable as possible, in the sense of Frobenius' theorem. 
Note that, if $f \in \Cinfty(M)$ is a nowhere-vanishing function, 
then $f\eta \in \Omega^1(M)$ generates the same contact distribution as $\eta$.
Thus, a contact distribution is locally equivalent to a \emph{conformal class} of $1$-forms satisfying the preceding non-integrability condition. 

The tools and structures of contact geometry have proven to be very effective to model dissipative systems~(see for instance~\cite{Bra_18,Got_16,RMS_17} for some applications). 
In particular, for mechanical systems, contact geometry has been successfully applied to describe non-conservative Lagrangian and Hamiltonian dynamics~\cite{Lainz_19,GGMRR_21}. 
The variational formulation of such systems is due to G.~Herglotz (see~\cite{Herglotz1930,Herglotz1985}), around 1930. 
The approach is analogous to the formulation of autonomous and non-autonomous Hamiltonian systems via, respectively, symplectic and cosymplectic geometry~\cite{MR_2025}.

Within the framework of contact Lagrangian mechanics, there is a topic that has received comparatively little attention: 
the analysis of \emph{singular} Lagrangian systems. 
For such systems, the underlying geometrical framework is not that of contact geometry, as the structures involved exhibit some degeneracy. 
This situation parallels that of ordinary (symplectic) mechanics, where the geometric study of singular Lagrangian functions naturally involves that of presymplectic geometry (see~\cite{Car_1990, GNH_1978}).

A first attempt to study singular Lagrangian systems in the context of contact mechanics can be found in~\cite{LL_19a}, where the authors give a definition of \emph{precontact form}. 
As later pointed out in~\cite{LGGMR_23}, this analysis 
is not general enough
since it only considers the case in which the geometric structure admits a Reeb vector field, 
an assumption that is not always satisfied.
The main reason for this limitation is that \cite{LL_19a} was aimed at the study of Hamiltonian systems on precontact manifolds, and the equations of motion explicitly relied on the existence of a Reeb vector field. 
However, in a recent paper~\cite{GMRR_2025}, we derived two new equivalent formulations of the contact Hamiltonian equations. 
Notably, these do not depend on the Reeb vector field, whose existence and uniqueness are not guaranteed.
This development naturally motivates the study of precontact forms in full generality, which is precisely the goal of the present work.

Precontact structures defined as hyperplane fields have also been studied, see for instance~\cite{GG_23,Tor_2018,Vit_2018}.  
In contrast, our approach focuses on the case where the precontact structure is given by a global $1$-form%
\footnote{In the literature this case is sometimes referred to as \emph{co-oriented} (pre)contact structures.}. 
This more specific viewpoint is the usual one in some applications,
as for instance geometric mechanics,
but will also be key for our study,
as we are going to explain.

Similarly to the definition of contact form,
we will define a \emph{precontact form} to be a nowhere-vanishing 1-form 
$\eta\in \Omega^1(M)$ such that the rank of $\d\eta$ is constant and, for a certain~$r$,
$$    
\eta\wedge(\d\eta)^r \neq 0\,,
\qquad
\eta\wedge(\d\eta)^{r+1} = 0
$$
at every point of~$M$. 
This condition is related to the notion of \emph{class of a differential form} (see~\cite[Ch.\,VI]{god_1969}). 
The class of a differential form, when constant, intuitively represents the minimum number of coordinates required to express the form in a local.
It can be computed as an algebraic condition involving the form and its differential.
Then, for a 1-form $\eta$, it is a precontact form if and only if it has class $2r{+}1$ or $2r{+}2$, provided $\eta$ is nowhere-vanishing.

To establish these and other results, we have found it useful to take a slightly more general approach, 
by considering a pair $(\tau,\omega)$ of a differential $1$-form and a $2$-form.
By generalizing the definition of class to such pairs, we can study the properties of distinguished vector fields, such as Reeb and Liouville vector fields, whose existence is equivalent to the class of the pair $(\tau,\omega)$ being odd or even, respectively. 
This characterizations provide the necessary tools to describe the properties of some objects naturally induced by the geometric structure, 
like the tensor field $\omega + \tau \otimes \tau$ or the $3$-form $\tau \wedge \omega$.
We also establish the conditions under which certain functions can be used to change the parity of the class.
All of these results are applied to the particular pair $(\eta,\d\eta)$, formed by a $1$-form $\eta$ and its exterior derivative, which is precisely the case of precontact manifolds. 
As an application, Hamiltonian dynamics on precontact manifolds is introduced.


The paper is organized as follows.
In Section~\ref{ch_2}, we begin by introducing the notations and definitions of the main objects used throughout the paper and state some basic results. 
Then, we study pairs of a differential $1$-form and a $2$-form.
We characterize such structures in terms of their class, and relate this notion to the existence of Reeb vector fields and Liouville vector fields.
In Section~\ref{ch_3} we introduce and study the properties of some geometrical objects induced by pairs of differentials forms.
Some relevant examples of the framework are presented in Section~\ref{sec_examples}. 
In Section~\ref{sec_precontact}, we apply the previous results to characterize and study precontact manifolds
We also define Hamiltonian mechanics on precontact manifolds, and provide some necessary conditions that have to be satisfied in order for solutions to exist.
Finally, in Section~\ref{sec_changes}, we present some novel results which characterize the functions that can alter the parity of the class of a pair of a $1$-form and a $2$-form. 
We also provide sufficient conditions for the existence of such functions.

Throughout the paper, all the manifolds and maps are smooth.
Sum over crossed repeated indices is understood.

\section{Doublets of a one-form and a two-form} 
\label{ch_2}

This section is devoted to the study of several properties 
of a pair formed by a $1$-form and a $2$-form,
subject to some regularity conditions.
We start with the preliminary definitions and then we state and prove some essential results.

\subsection{Rank of a differential form}
\label{sec_2.1}

\begin{definition}
Let $\alpha \in \Omega^p (M)$ be a $p$-form. 
It defines a vector bundle morphism
$$
\begin{array}{rccc}
     \widehat\alpha\colon & \Tan M & \longto & \bigwedge\nolimits^{p-1}\cT M \\
     & v & \longmapsto & \inn{v}\alpha
\end{array}
$$
whose kernel at a point $x$ is 
$$
\Ker \alpha_x = \{ v\in \Tan_x M \mid \inn{v} \alpha_x =0  \}\,.
$$
Its annihilator is 
$$
(\Ker \alpha_x) ^{\circ} = 
\{  u\in \Tan^*_x M\mid \inn{v}u=0\ \ \text{ for every }v\in \Ker \alpha_x\} \,.
$$
The rank of~$\alpha$ is the rank of this linear map;
it coincides with the codimension of the kernel.
\end{definition}

\begin{theorem}[\cite{god_1969}]
\label{theorem_annihilator}
Given a $p$-form $\alpha \in \Omega^{p} (M)$ on a manifold $M$, the image of the multilinear map
\vadjust{\kern -1em}
\begin{align}
\Tan_xM \times \overset{(p-1)}{\dotsb} \times \Tan_xM &\longrightarrow 
\Tan_x^*M 
\\
(v_1,\dotsc,v_{p-1}) &\longmapsto 
\inn{v_1} \dotsb \inn{v_{p-1}} \alpha
\end{align}
is~$(\Ker \alpha_x)^{\circ}$.
\end{theorem}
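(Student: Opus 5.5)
The statement is pointwise and purely linear-algebraic, so I would fix $x$ and write $E=\Tan_xM$, $a=\alpha_x\in\bigwedge^pE^*$, $K=\Ker a$. Let $W\subset E^*$ be the image of the map $(v_1,\dots,v_{p-1})\mapsto \inn{v_1}\dotsb\inn{v_{p-1}}a$. Since this map is multilinear, its image is a priori only a union of "decomposable" values. I would therefore first show that it is closed under addition, or else argue with its linear span and prove afterwards that the span is attained. The plan is to prove $W\subset K^\circ$, then $K^\circ\subset\operatorname{span}W$, and finally that the image is already a subspace.

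\emph{Inclusion $W\subset K^\circ$.} For $v\in K$, skew-symmetry gives
\[
\inn{v}\inn{v_1}\dotsb\inn{v_{p-1}}a=\pm\,\inn{v_1}\dotsb\inn{v_{p-1}}\inn{v}a=0 .
\]

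\emph{Reverse inclusion.} I would dualize. Suppose $w\in E$ is annihilated by every element of $W$. Then $a(v_1,\dots,v_{p-1},w)=0$ for all $v_i$, that is, $\inn{w}a=0$ up to sign, so $w\in K$. This shows $W^\circ\subset K$, and taking annihilators gives $K^\circ\subset(\operatorname{span}W)$.

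\emph{Image is a subspace.} This is the step I expect to be the main obstacle. I would choose a complement $C$ of $K$ in $E$, and a basis $e_1,\dots,e_m$ of $C$ with dual forms $e^1,\dots,e^m$ extended by zero on $K$, so that they form a basis of $K^\circ$. Since $\inn{v}a=0$ for $v\in K$, $a$ lies in $\bigwedge^p K^\circ$:
\[
a=\sum_I a_I\,e^{I}.
\]
Given a target $u=c_j e^j\in K^\circ$, I would look for $v_1,\dots,v_{p-1}$ realizing it. The first attempt would be an inductive normal form: choose $v_1$ so that $a'=\inn{v_1}a$ keeps the same "kernel annihilator" restricted appropriately, and recurse on degree. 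If that fails in general, then since the paper cites \cite{god_1969}, the intended statement may well mean the subspace generated by the image. In that case the span argument above already completes the proof. For $p\le 2$ the map is linear and the image is automatically a subspace, which covers the forms actually used later ($1$-forms and $2$-forms), so I would at least state that case cleanly.
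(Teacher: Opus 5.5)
Your first two steps already prove the statement in the only form in which it is true, which is also the form the paper uses. With $W$ the image, skew-symmetry gives $K\subset W^\circ$, and evaluation gives $W^\circ\subset K$. Hence $\operatorname{span}W=W^{\circ\circ}=K^\circ$. The paper gives no proof of its own here; it only cites Godbillon. Your duality argument is the standard one, and it also works for $p=1$ with the empty tuple.

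Your third step, however, is not merely an obstacle: it is false, so the inductive normal form cannot succeed and should be dropped.

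\emph{The case $p=1$.} The map has no arguments, so its image is the single point $\{\alpha_x\}$. The paper nevertheless concludes $(\Ker\alpha_x)^\circ=\langle\alpha_x\rangle$. So your remark that the image is automatically a subspace for $p\le 2$ holds only for $p=2$.

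\emph{A counterexample for $p=3$.} Let $E=\R^7\oplus\R e_0$ and let $\varphi(x,y,z)=(x\times y)\cdot z$ be the $G_2$ form on $\R^7$. Then $\varphi(h_1,h_2,\cdot)=0$ only when $h_1,h_2$ are linearly dependent, and $\inn{h}\varphi$ has rank $6$ for $h\neq 0$. Let $\theta$ satisfy $\theta(e_0)=1$ and $\theta|_{\R^7}=0$. Set $a=\varphi+\theta\wedge c$ with $c=e^1\wedge e^2$, where $\varphi$ and $c$ are extended by zero on $e_0$.
\begin{itemize}
\item[] \emph{$\theta\in K^\circ$.} For $w=h+te_0$ one has $\inn{w}a=(\inn{h}\varphi+tc)-\theta\wedge\inn{h}c$. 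Comparing ranks ($6$ versus at most $2$) forces $h=0$ and then $t=0$. So $\Ker a=0$ and $K^\circ=E^*\ni\theta$.
\item[] \emph{$\theta$ is not in the image.} Suppose $a(v_1,v_2,\cdot)=\theta$ with $v_i=h_i+t_ie_0$. Evaluating at $e_0$ gives $c(h_1,h_2)=1$. Evaluating at $h_1$ and at $h_2$ gives $-t_1=0$ and $-t_2=0$. Then $\varphi(h_1,h_2,\cdot)=0$ on $\R^7$, so $h_1,h_2$ are dependent, which contradicts $c(h_1,h_2)=1$.
\end{itemize}

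So the theorem must be read as ``the image \emph{spans} $(\Ker\alpha_x)^\circ$''. That is exactly what your first two steps prove, and it is all the paper needs.
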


In particular, this theorem implies that,
for a $1$-form $\alpha\in \Omega^{1} (M)$, we have
$$
(\Ker \alpha_x)^{\circ} = \langle \alpha_x\rangle\,.
$$ 
And if $\omega \in\Omega^2 (M)$ is a $2$-form, then 
$$
(\Ker\omega_x)^{\circ} =  \langle \inn{e_1}\omega_x,\dotsc,\inn{e_m}\omega_x \rangle\,,
$$
where $e_1,\dotsc, e_m$ constitute a basis of $\Tan_xM$.

\begin{theorem}[\cite{god_1969,LM_1987}]
\label{thm:even_rank}
Let $\omega \in \Omega ^2(M)$ be a $2$-form on $M$.
Then the rank of $\omega$ is even at every point of $M$. 
Also, this rank is $2r$ at a point~$x$ if and only if
$$
(\omega_x)^r\neq0\,, \qquad 
(\omega_x)^{r+1}=0\,.
$$
\end{theorem}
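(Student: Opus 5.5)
The argument is pointwise linear algebra, so I fix $x\in M$ and work with the skew-symmetric bilinear form $\omega_x$ on the vector space $V=\Tan_xM$. The plan is to put $\omega_x$ in a normal form by a symplectic Gram--Schmidt procedure, and then read off both the parity of the rank and the wedge-power criterion from that form.

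\emph{Step 1: normal form.} If $\omega_x=0$, the rank is $0=2\cdot 0$ and $(\omega_x)^0=1\neq0$, $(\omega_x)^1=0$, so both claims hold. Otherwise I pick $u_1,v_1$ with $\omega_x(u_1,v_1)=1$. Let $W_1=\langle u_1,v_1\rangle$ and $W_1^\omega=\{w\mid \omega_x(w,W_1)=0\}$. The decomposition $w=\bigl(w-\omega_x(w,v_1)u_1+\omega_x(w,u_1)v_1\bigr)+\bigl(\omega_x(w,v_1)u_1-\omega_x(w,u_1)v_1\bigr)$ shows $V=W_1\oplus W_1^\omega$. I then iterate the construction on the restriction of $\omega_x$ to $W_1^\omega$, which is again skew-symmetric. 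By induction on $\dim V$ I obtain a basis $u_1,v_1,\dots,u_r,v_r,z_1,\dots,z_k$ with dual basis $u^i,v^i,z^j$ such that
\[
\omega_x=\sum_{i=1}^r u^i\wedge v^i .
\]

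\emph{Step 2: parity of the rank.} In this basis $\Ker\omega_x=\langle z_1,\dots,z_k\rangle$, because $\inn{u_i}\omega_x=v^i$ and $\inn{v_i}\omega_x=-u^i$ are linearly independent. Hence the rank is $\dim V-k=2r$, which is even.

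\emph{Step 3: wedge powers.} The $2$-forms $u^i\wedge v^i$ commute and each squares to zero. Expanding by the multinomial formula gives
\[
(\omega_x)^s=s!\sum_{i_1<\dots<i_s}u^{i_1}\wedge v^{i_1}\wedge\dots\wedge u^{i_s}\wedge v^{i_s}.
\]
This is nonzero for $s\le r$, since for $s=r$ it is $r!$ times a wedge of $2r$ independent covectors. It vanishes for $s>r$, because every term then repeats an index. So rank $2r$ implies $(\omega_x)^r\neq0$ and $(\omega_x)^{r+1}=0$. Conversely, suppose these two conditions hold and the rank is $2r'$. By what was just proved, $(\omega_x)^{s}\neq0$ exactly when $s\le r'$. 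The conditions then force $r\le r'$ and $r+1>r'$, so $r=r'$.

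The only step with real content is Step 1, specifically the splitting $V=W_1\oplus W_1^\omega$ and checking that the induction closes. The rest is bookkeeping. Note that the rank may vary with $x$: the statement is purely pointwise, and no smoothness of the basis is needed.
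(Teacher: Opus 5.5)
Your proof is correct. The paper gives no argument of its own for this statement and only cites Godbillon and Libermann--Marle; your route, building the canonical form $\omega_x=\sum_{i=1}^r u^i\wedge v^i$ by inductively splitting off $\langle u_1,v_1\rangle$ and then reading off the rank and the powers $(\omega_x)^s$, is the standard argument in those references. The one step you leave implicit is $W_1\cap W_1^\omega=0$, which follows at once from $\omega_x(au_1+bv_1,v_1)=a$ and $\omega_x(au_1+bv_1,u_1)=-b$.
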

(We denote by $\omega^r$ the exterior product of $r$ copies of $\omega$.)

\begin{proposition}[\cite{god_1969}]\label{rank_powers}
    Let $\omega \in \Omega^2(M)$ be a $2$-form of constant rank. Then, if $\omega$ has rank~$2r$, we have
    $$(\Ker \omega)^{\circ}=(\Ker \omega^s)^{\circ}\,,$$
    for all $s=1,\dotsc,r$.
\end{proposition}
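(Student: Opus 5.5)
The statement is pointwise linear algebra, so I fix $x\in M$ and work in $V=\Tan_xM$ with $\omega=\omega_x$ of rank~$2r$. By Theorem~\ref{thm:even_rank} I may pick a basis $e^1,\dots,e^m$ of $V^*$ adapted to $\omega$, namely
\[
\omega = e^1\wedge e^2+\dots+e^{2r-1}\wedge e^{2r}.
\]
This is the standard Darboux normal form for a skew bilinear form. In this basis $\Ker\omega$ is spanned by the dual vectors $e_{2r+1},\dots,e_m$. Hence $(\Ker\omega)^\circ=\langle e^1,\dots,e^{2r}\rangle$. The plan is to compute $\Ker\omega^s$ directly in this basis and check that it coincides with $\Ker\omega$, which immediately gives equality of the annihilators.

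The inclusion $\Ker\omega\subseteq\Ker\omega^s$ is immediate: $\inn{v}$ is a graded derivation, so
\[
\inn{v}\omega^s = s\,(\inn{v}\omega)\wedge\omega^{s-1}.
\]
For the reverse inclusion, write $v=\sum v^ie_i$ and suppose $\inn{v}\omega^s=0$ with $1\le s\le r$. Set
\[
\beta=\inn{v}\omega=\sum_{k=1}^r\bigl(v^{2k-1}e^{2k}-v^{2k}e^{2k-1}\bigr).
\]
This is a $1$-form in $W^*=\langle e^1,\dots,e^{2r}\rangle$, and $\omega$ is symplectic on $W$. The key step is to show that $\beta\wedge\omega^{s-1}=0$ forces $\beta=0$ whenever $s-1\le r-1$, i.e.\ that wedging with $\omega^{s-1}$ is injective on $1$-forms of $W$. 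This is the main (mild) obstacle.

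My first attempt at that step is the explicit expansion
\[
\omega^{s-1}=(s-1)!\sum_{|I|=s-1}\omega_I, \qquad \omega_I=\bigwedge_{k\in I}e^{2k-1}\wedge e^{2k}.
\]
Suppose a coefficient of $\beta$ is nonzero, say that of $e^{2k}$. Since $s-1\le r-1$, I can choose $I$ with $|I|=s-1$ and $k\notin I$. Then the coefficient of the monomial $e^{2k}\wedge\omega_I$ in $\beta\wedge\omega^{s-1}$ receives exactly one contribution, namely the coefficient of $e^{2k}$ in $\beta$ times $(s-1)!$. All other terms either contain a repeated $e^j$ or give a different monomial. So that coefficient is nonzero, a contradiction. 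Hence $\beta=0$, which means $v^1=\dots=v^{2r}=0$, i.e.\ $v\in\Ker\omega$.

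This gives $\Ker\omega^s=\Ker\omega$, and taking annihilators yields the claim at each point. Constant rank is only needed so that the statement holds uniformly over $M$. Alternatively, the injectivity could be cited as the Lefschetz-type fact that $L^{s-1}\colon\Lambda^1W^*\to\Lambda^{2s-1}W^*$ is injective for $2s-1\le 2r-1$. The direct monomial count is self-contained, though, and I would use it.
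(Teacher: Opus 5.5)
Your proof is correct: the derivation identity, the expansion of $\omega^{s-1}$ and the monomial count are all sound, and the count needs exactly $s\le r$.

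The paper itself gives no proof of this proposition and only cites Godbillon. So the useful comparison is with the route its own tools suggest.

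You work on the kernel side. The identity $\inn{v}\omega^s=s\,(\inn{v}\omega)\wedge\omega^{s-1}$ reduces the claim to the injectivity of $\beta\mapsto\beta\wedge\omega^{s-1}$ on $\langle e^1,\dots,e^{2r}\rangle$, and you then pass to annihilators.

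The alternative stays on the annihilator side via Theorem~\ref{theorem_annihilator}: $(\Ker\omega^s)^\circ$ is spanned by the contractions $\inn{v_1}\cdots\inn{v_{2s-1}}\omega^s$. Since $\omega^s\in\bigwedge^{2s}\langle e^1,\dots,e^{2r}\rangle$, all of these lie in $(\Ker\omega)^\circ$. Conversely, pick $I$ with $|I|=s-1$ and $k\notin I$, and contract with $e_{2k-1}$ and with $e_{2l-1},e_{2l}$ for $l\in I$. This gives a nonzero multiple of $e^{2k}$, by the same combinatorial fact you use.

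What your version buys is that it proves $\Ker\omega^s=\Ker\omega$ directly. That is exactly the form in which the paper later uses the proposition, in the proof of Lemma~\ref{lem:lepage} and the remark after it. It also makes clear that the real content is a Lefschetz-type injectivity.

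One small correction: Theorem~\ref{thm:even_rank}, as stated in the paper, does not give the normal form $\omega=\sum_k e^{2k-1}\wedge e^{2k}$. Cite that instead as the standard canonical form of a skew-symmetric bilinear form.
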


\subsection{Pairs of differential forms}

\begin{definition}
Let $\alpha\in \Omega^p(M)$ be a $p$-form on $M$ and $\beta \in \Omega^q(M)$. 
The \dfn{characteristic distribution} of the pair $(\alpha, \beta)$ at a point $x\in M$ is 
$$
\K_{(\alpha,\beta)_x} = \Ker \alpha_x \cap \Ker \beta_x \,.
$$
The \dfn{class} of $(\alpha, \beta)$ at~$x$ is the codimension of its characteristic distribution at~$x$.
$$
\cl(\alpha,\beta)_x = 
\codim \K_{(\alpha,\beta)_x} 
\,.
$$
\end{definition}

The annihilator of the characteristic distribution is
$$
    \K_{(\alpha,\beta)_x}^{\circ} =
    (\Ker \alpha_x \cap \Ker\beta_x)^{\circ} =
    (\Ker \alpha_x)^\circ + (\Ker\beta_x)^{\circ}
\,,
$$
therefore the class is equal to the dimension of this annihilator,
$\cl(\alpha,\beta)_x = 
\dim \left( (\Ker \alpha_x)^\circ + (\Ker\beta_x)^{\circ} \right)
$.

By the Grassmann identity we have
$$
\dim ((\Ker \alpha )^{\circ}+(\Ker \beta)^{\circ}) 
=
\dim (\Ker \alpha)^{\circ} +
\dim (\Ker \beta)^{\circ} -
\dim ((\Ker \alpha)^{\circ}\cap(\Ker \beta)^{\circ}) 
\,,
$$
or, in other words,
\begin{equation}
\label{eq:grassmann_rank}
\cl(\alpha,\beta)
=
\rk \alpha +
\rk \beta -
\dim ((\Ker \alpha)^{\circ}\cap(\Ker \beta)^{\circ}) 
\,.
\end{equation}

\begin{remark}
\label{rmrk:class}
Given a $p$-form $\alpha \in \Omega^p(M)$, the usual definition of characteristic distribution is
$$
\K \coloneqq \K_{(\alpha,\d\alpha)} = 
\Ker \alpha \cap \Ker \d \alpha\,,
$$
and the class of $\alpha$ is the codimension of the characteristic distribution~\cite{god_1969,LM_1987}.

If the form $\alpha\in\Omega^p(M)$ is closed, then 
$$
\K= \Ker \alpha\,,
$$
because $\d\alpha = 0$. 
This, in particular, implies that if $\alpha$ is a closed $p$-form, then the rank of $\alpha$ is equal to its class.
Thus, by Theorem \ref{thm:even_rank}, every closed $2$-form has even class.
\end{remark}

\subsection{Doublets of a 1-form and a 2-form}
\label{sec_pairs}

From now on we will consider a pair $(\tau,\omega)$, 
with $\tau \in \Omega^1(M)$ and $\omega \in \Omega^2(M)$. 
For most of our results we will need these forms to satisfy some regularity conditions leading to constant rank distributions. 
Prior to this, let us explore some situations where nonconstant rank might arise.

First, if $\tau$ is a 1-form then 
its rank at a point $x$ is either $0$ if $\tau_x=0$, 
or $1$ if $\tau_x \neq 0$.
\begin{itemize}
\item 
If $\tau_x = 0$, then 
$\codim (\Ker \tau_x) = \dim (\Ker \tau_x)^\circ = 0$, and 
$\dim \left( \strut
(\Ker \tau_x)^{\circ}\cap(\Ker \omega_x)^{\circ}
\right) =0$. 
So equation~\eqref{eq:grassmann_rank} reduces to
\begin{equation}
\label{eq:grassman_even}
\cl(\tau_x,\omega_x) = \rk \omega_x\,,   
\end{equation}
and so the class is even.
\item  
If $\tau_x \neq 0$, then 
$(\Ker \tau_x)^\circ = \langle \tau_x \rangle$, therefore  
$\codim (\Ker \tau_x) = \dim (\Ker \tau_x)^\circ = 1$.
From this,
$$
\dim \left( \strut
(\Ker \tau_x)^{\circ}\cap(\Ker \omega_x)^{\circ}
\right) 
\in \{0,1\}
.
$$
Moreover, equation~\eqref{eq:grassmann_rank} reduces to
\begin{equation}
\label{eq:grassmann_doublets}
\cl(\tau_x,\omega_x)
=
1 +
\rk \omega_x -
\dim ((\Ker \tau_x)^{\circ}\cap(\Ker \omega_x)^{\circ}) 
\,.
\end{equation}
\end{itemize}

\begin{proposition}
\label{prop:doublet_conditions}
Consider a pair $(\tau,\omega)$.
The class at a point~$x$ is $2r$ or $2r+1$ if, and only if, the rank of $\omega_x$ is $2r$.
Moreover:
\begin{enumerate}[$(a)$]
\item 
If the class of a pair is constant, 
then the rank of $\omega$ is constant. 
\item 
If the class is constant and odd,
then $\tau$ is nowhere-vanishing.
\end{enumerate} 
    
\end{proposition}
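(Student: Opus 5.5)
The plan is a pointwise dimension count using the Grassmann identity~\eqref{eq:grassmann_rank} and its two special forms \eqref{eq:grassman_even} and \eqref{eq:grassmann_doublets}, together with Theorem~\ref{thm:even_rank}, which says that $\rk\omega_x$ is always even.

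Fix $x$ and write $\rk\omega_x = 2s$. First I show that $\cl(\tau,\omega)_x \in \{2s, 2s+1\}$, splitting into the two cases already described before the statement.
\begin{itemize}
\item If $\tau_x=0$, then \eqref{eq:grassman_even} gives $\cl(\tau,\omega)_x = 2s$.
\item If $\tau_x\neq0$, then \eqref{eq:grassmann_doublets} gives $\cl(\tau,\omega)_x = 1+2s-\delta$, where $\delta = \dim\bigl((\Ker\tau_x)^\circ\cap(\Ker\omega_x)^\circ\bigr)\in\{0,1\}$. So the class is $2s+1$ or $2s$.
\end{itemize}

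Both directions of the equivalence follow from this. Suppose $\rk\omega_x = 2r$; then $s=r$ and the class is $2r$ or $2r+1$. Conversely, suppose the class is $2r$ or $2r+1$. We know it is also $2s$ or $2s+1$. The sets $\{2r,2r+1\}$ and $\{2s,2s+1\}$ are blocks of the partition of $\mathbb{N}$ into consecutive pairs, so they intersect only if $r=s$, i.e.\ $\rk\omega_x=2r$. Equivalently, $r = \lfloor \cl(\tau,\omega)_x/2\rfloor$.

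For $(a)$, if the class is constant, the formula $\rk\omega_x = 2\lfloor \cl(\tau,\omega)_x/2\rfloor$ shows that $\rk\omega$ is constant. For $(b)$, if the class is constant and odd, then at every point it is odd. But the case $\tau_x=0$ forces even class, so $\tau_x\neq0$ everywhere.

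The only step requiring care is the converse direction of the equivalence, which needs the parity fact from Theorem~\ref{thm:even_rank} to make the pairs $\{2s,2s+1\}$ disjoint for different $s$. Everything else is direct substitution into the earlier formulas.
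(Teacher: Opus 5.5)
Your proof is correct and follows essentially the same route as the paper. Both arguments specialize the Grassmann identity to the cases $\tau_x=0$ and $\tau_x\neq0$ and combine this with the evenness of $\rk\omega_x$ from Theorem~\ref{thm:even_rank}; where the paper checks class $2r$ and $2r+1$ separately, you get the converse in one step from the disjointness of the blocks $\{2s,2s+1\}$, and you also write out $(a)$ and $(b)$ explicitly, which the paper leaves implicit.
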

\begin{proof}
If the rank of $ \omega_x$ is $2r$, then it is clear from the previous comments that the class must be $2r$ or $2r+1$. 

Assume that $\cl(\tau_x,\omega_x)=2r$.
Equation~\eqref{eq:grassmann_rank}
implies that $\rk(\omega_x)=2r - \rk{\tau_x} +
\dim ((\Ker \tau_x)^{\circ}\cap(\Ker \omega_x)^{\circ})$,
and the last two terms are either 0 or~1, 
so they cannot sum up to be bigger than $1$ or smaller than $-1$.
Hence, since the rank of $\omega$ at $x$ must be even, $\rk (\omega_x)= 2r$.

If $\cl(\tau_x,\omega_x)=2r+1$, then it follows from equation~\eqref{eq:grassman_even} that  $\tau_x\neq 0$. Now, using equation~\eqref{eq:grassmann_doublets}, and the fact that 
$
\dim \left( \strut
(\Ker \tau_x)^{\circ}\cap(\Ker \omega_x)^{\circ}
\right) 
\in \{0,1\}
$, we have $\rk (\omega_x)= 2r$.

\end{proof}

The converse of the previous proposition is not true, as shown in the following counterexamples.
    
\begin{example}
Constant rank of the forms does not guarantee that the pair $(\tau,\omega)$ has constant class. 
Consider $M=\mathbb{R}^3$ with coordinates $(x,y,z)$, and let
$$
    \tau = \d z\,, \qquad 
    \omega = \d y \wedge \d z + z\, \d x \wedge \d y\,.
$$
Both forms have constant rank ($\rk\tau = 1$, $\rk\omega = 2$), but their class is not constant: 
it is $3$ for $z\neq 0$ and drops to $2$ at $z=0$. 
\end{example}

\begin{example} 
If the rank of $\omega$ is constant, and the class of the pair is constant and even, then $\tau$ is not necessarily nowhere-vanishing. As an example, we take $M= \R^2$, with coordinates $(x,y)$, and
$$
    \tau = x\d x\,, \qquad 
    \omega = \d x \wedge \d y\,.
$$
Clearly $\tau$ vanishes at the points where $x=0$, $\omega$ has constant rank $2$ and the class of the pair is constant and equal to $2$.
\end{example}

From their definitions it is clear that these statements are equivalent:
(a) $\K_{(\tau,\omega)} = \Ker \tau \cap \Ker \omega = 0$;
(b) $\cl(\tau,\omega) = \dim M$;
(c) $\T^*M = (\Ker \tau)^{\circ} + (\Ker \omega)^{\circ}$.
Moreover:

\begin{proposition}
\label{prop:directsum}
Let $(\tau,\omega)$ be a pair of a 1-form and a 2-form. 
The following are equivalent:
\begin{enumerate}[$(i)$]
\item 
$\T^* M =(\Ker \tau)^{\circ} \oplus (\Ker \omega)^{\circ}$;
\item
$\T M = \Ker \tau \oplus \Ker \omega $;
\item 
$\cl (\tau,\omega) = \dim M$ and either the class is odd or $\tau=0$.
\end{enumerate}
\end{proposition}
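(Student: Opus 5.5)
The argument is pointwise linear algebra at a point $x$. The main tool is the Grassmann identity~\eqref{eq:grassmann_rank} together with the dichotomy $\tau_x=0$ / $\tau_x\neq0$ from the discussion preceding Proposition~\ref{prop:doublet_conditions}. Throughout write $m=\dim M$.

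\emph{(i)$\Leftrightarrow$(ii).} This is duality for subspaces of a finite-dimensional space. Taking annihilators gives $(\Ker\tau_x+\Ker\omega_x)^\circ=(\Ker\tau_x)^\circ\cap(\Ker\omega_x)^\circ$ and $(\Ker\tau_x\cap\Ker\omega_x)^\circ=(\Ker\tau_x)^\circ+(\Ker\omega_x)^\circ$. Hence $\Ker\tau_x+\Ker\omega_x=\T_xM$ if and only if the annihilators intersect trivially, and $\Ker\tau_x\cap\Ker\omega_x=0$ if and only if the annihilators sum to $\T^*_xM$. The two direct-sum conditions are therefore equivalent.

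\emph{(i)$\Rightarrow$(iii).} From the equivalences noted just before the statement, $\T^*M=(\Ker\tau)^\circ+(\Ker\omega)^\circ$ already gives $\cl(\tau,\omega)=m$. Directness means the intersection term in~\eqref{eq:grassmann_rank} vanishes, so $\cl=\rk\tau+\rk\omega$.
\begin{itemize}
\item If $\tau_x=0$, the condition ``$\tau=0$'' holds at $x$.
\item If $\tau_x\neq0$, then $\cl=1+\rk\omega_x$. Since $\rk\omega_x$ is even by Theorem~\ref{thm:even_rank}, the class is odd.
\end{itemize}

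\emph{(iii)$\Rightarrow$(i).} Again $\cl=m$ gives the sum $\T^*M=(\Ker\tau)^\circ+(\Ker\omega)^\circ$, so only directness remains.
\begin{itemize}
\item If $\tau_x=0$, then $(\Ker\tau_x)^\circ=0$ and the sum is trivially direct.
\item If the class is odd, equation~\eqref{eq:grassman_even} forces $\tau_x\neq0$, so~\eqref{eq:grassmann_doublets} applies: $\cl=1+\rk\omega_x-\delta$ with $\delta=\dim\bigl((\Ker\tau_x)^\circ\cap(\Ker\omega_x)^\circ\bigr)\in\{0,1\}$. As $\rk\omega_x$ is even and $\cl$ is odd, $\delta$ must be even, so $\delta=0$ and the sum is direct.
\end{itemize}

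The one point needing care is how to read ``either the class is odd or $\tau=0$'' in (iii). I will read it pointwise, which makes all three conditions pointwise and the argument above complete. In fact, since $\cl=m$ is constant, its parity is the parity of $m$. For $m$ odd, Proposition~\ref{prop:doublet_conditions}(b) gives $\tau$ nowhere vanishing. For $m$ even, the class is never odd, so the parity argument above shows that directness forces $\tau_x=0$ at every point; that is, $\tau=0$ globally. The global and pointwise readings therefore agree, and I expect no real obstacle beyond this bookkeeping.
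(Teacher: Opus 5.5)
Your proof is correct and follows the route the paper intends: the paper gives no separate proof of this proposition, treating it as a direct consequence of the Grassmann identity \eqref{eq:grassmann_rank} and the dichotomy $\tau_x=0$ / $\tau_x\neq0$ (equations \eqref{eq:grassman_even} and \eqref{eq:grassmann_doublets}), which is exactly what you use, together with annihilator duality for (i)$\Leftrightarrow$(ii). Your remark that the pointwise and global readings of (iii) agree, because $\cl(\tau,\omega)=\dim M$ fixes the parity of the class at every point, is a useful clarification that the paper leaves implicit.
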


In view of Proposition~\ref{prop:doublet_conditions} and Proposition~\ref{prop:directsum},
it is apparent that the cases where the class is constant
have nicer properties.
To avoid repetitions,
we will use the following terminology:

\begin{definition}
We call $(\tau,\omega)$ a \dfn{doublet} if the
pair has constant class.
\end{definition}

\subsection{Reeb and Liouville vector fields}
\label{sec_ReebLiouville}


In this section, we introduce two distinguished types of vector fields associated with a doublet, highlighting their existence in connection to the class. 
These vector fields are central to the subsequent sections and have significant applications in geometry and analytical mechanics.

\begin{definition}
Let $(\tau,\omega)$ be a pair of a 1-form and a 2-form.
\begin{itemize}
\item A \dfn{Reeb vector field} for the pair 
is a vector field $R$ satisfying
\begin{equation}
\label{reeb_definition}
\inn{R}\tau= 1\,, \qquad \inn{R}\omega=0\,. 
\end{equation}

\item A \dfn{Liouville vector field} for the pair 
is a vector field $\Delta$ satisfying
\begin{equation}
\label{Liouville_vf}
\inn{\Delta}\omega = \tau \,.
\end{equation}
\end{itemize}

A similar nomenclature will be applied at a specific point $x \in M$:
a vector $R_x$ is a \dfn{Reeb vector}, 
or a vector $\Delta_x$ is a \dfn{Liouville vector},
for the pair $(\tau_x,\omega_x)$,
when the same relations hold at~$x$.
\end{definition}

Notice that Reeb and Liouville vector fields cannot coexist.
Indeed, if both a
Reeb vector field~$R$ and a Liouville vector field $\Delta$ existed simultaneously, then
$$
\inn{R} \inn{\Delta} \omega
  = \inn{R} \tau
  = 1\,,
$$
but also, since $\inn{R} \omega = 0$, we have
$$
\inn{R} \inn{\Delta} \omega
  = - \inn{\Delta} \inn{R} \omega
  = 0\,,
$$
in contradiction with the preceding equation.

Then, the parity of the class gives 
necessary and sufficient conditions
for the existence of Reeb or Liouville vector fields, as stated in the next theorem.

\begin{theorem}
\phantomsection\label{thm:class-parity}
Let $(\tau,\omega)$ be a doublet.
\begin{itemize}
\item 
The following statements are equivalent:
\begin{enumerate}[$(i)$]
\item 
The class of the doublet is odd.
\item 
$\Ker \omega \not\subset \Ker\tau$.
\item 
The doublet has a Reeb vector field.
\end{enumerate}
\item 
The following statements are equivalent:
\begin{enumerate}[$(i)$]
\item 
The class of the doublet is even.
\item 
$\Ker \omega \subset \Ker \tau$.
\item 
The doublet has a Liouville vector field.
\end{enumerate}
\end{itemize}
Similar statements hold for a pair $(\tau_x,\omega_x)$
and Reeb or Liouville vectors.
\end{theorem}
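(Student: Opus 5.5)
The plan is to prove the pointwise statements first, using linear algebra at a fixed $x$, and then pass to vector fields using the constancy of the class and of $\rk\omega$ (Proposition~\ref{prop:doublet_conditions}).

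\emph{Pointwise equivalence of (i) and (ii).} By Proposition~\ref{prop:doublet_conditions}, $\rk\omega_x=2r$ and the class is $2r$ or $2r+1$. Equation~\eqref{eq:grassmann_rank} gives the class as $2r+1$ exactly when $\tau_x\neq0$ and $\langle\tau_x\rangle\cap(\Ker\omega_x)^\circ=0$. This is equivalent to $\tau_x\notin(\Ker\omega_x)^\circ$, that is, $\Ker\omega_x\not\subset\Ker\tau_x$. When $\tau_x=0$ the class is even and $\Ker\omega_x\subset\Ker\tau_x=\T_xM$ holds trivially. Taking complements, even class is equivalent to $\Ker\omega_x\subset\Ker\tau_x$.

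\emph{Pointwise existence of Reeb and Liouville vectors.} If $\Ker\omega_x\not\subset\Ker\tau_x$, choose $v\in\Ker\omega_x$ with $\tau_x(v)\neq0$ and rescale it to obtain a Reeb vector. Conversely, a Reeb vector lies in $\Ker\omega_x$ and is not in $\Ker\tau_x$. For Liouville vectors, the image of $\widehat\omega_x$ is $(\Ker\omega_x)^\circ$: this is Theorem~\ref{theorem_annihilator}, or simply the fact that the image of a skew map is the annihilator of its kernel. So a Liouville vector exists iff $\tau_x\in(\Ker\omega_x)^\circ$, iff $\Ker\omega_x\subset\Ker\tau_x$. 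This gives all the pointwise statements. Since the class is constant, parity is the same at every point, so condition (ii) holds globally iff it holds at each point.

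\emph{Global vector fields.} This is the main obstacle: we must pass from pointwise existence to smooth global sections. Since $\rk\omega$ is constant, $\widehat\omega\colon\T M\to\cT M$ is a constant-rank bundle morphism. Hence $\Ker\omega$ is a smooth subbundle and $\Ima\widehat\omega=(\Ker\omega)^\circ$ is a smooth subbundle.

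In the even case, $\tau$ is a smooth section of $\Ima\widehat\omega$. Choose a smooth complement $C$ of $\Ker\omega$, for instance via a Riemannian metric. Then $\widehat\omega|_C\colon C\to\Ima\widehat\omega$ is a vector bundle isomorphism, and $\Delta=(\widehat\omega|_C)^{-1}(\tau)$ is a smooth Liouville vector field.

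In the odd case, $\tau$ restricted to $\Ker\omega$ is a nowhere-vanishing section of $(\Ker\omega)^*$, by the pointwise equivalence above. Let $g$ be a Riemannian metric and let $Z\in\Gamma(\Ker\omega)$ be the $g$-dual of $\tau|_{\Ker\omega}$ inside $\Ker\omega$. Then $\tau(Z)=\|Z\|^2>0$, and $R=Z/\tau(Z)$ is a smooth Reeb vector field. Alternatively, one can build local solutions and glue them with a partition of unity, since the equations defining both kinds of fields are affine.

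Finally, global existence of either field implies pointwise existence, which closes the chain of equivalences in both items. The mutual exclusivity shown before the theorem is consistent with this, since the two parities are complementary.
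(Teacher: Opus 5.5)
Your proof is correct and follows essentially the paper's route. You get (i)$\Leftrightarrow$(ii) from the Grassmann formula \eqref{eq:grassmann_rank} on annihilators, and (ii)$\Rightarrow$(iii) by splitting constant-rank bundle maps, which you realize concretely with a Riemannian metric (a complement of $\Ker\omega$, and the normalized metric dual of $\tau|_{\Ker\omega}$); the paper instead invokes the splitting lemma for $0\to\K_{(\tau,\omega)}\to\Ker\omega\to M\times\R\to0$ and $0\to\Ker\widehat\omega\to\T M\to\Ima\widehat\omega\to0$.
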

\begin{proof}\,
\begin{itemize}
\item \textbf{Odd case}.
Assume that the class of the doublet is odd. 
It follows from equation~\eqref{eq:grassmann_doublets} that this is equivalent to the intersection of the annihilators of the kernels being zero and also to
$$
(\Ker \tau)^{\circ} \not\subset (\Ker \omega)^{\circ}\,.
$$
This is the same as
\begin{equation} \label{eq:annihilator_inclusion_odd}
\Ker \omega \not\subset \Ker\tau\,.
\end{equation}
This proves the equivalence between (i) and (ii).

From the definition of Reeb vector field, 
if condition (iii) holds, 
then condition~(ii) follows directly.

Conversely, let us assume condition~(ii). 
Since $\Ker \omega\subset \T M$ has constant dimension by assumption, it defines a vector subbundle, and the restricted map
$\phi\coloneqq \tau|_{\Ker \omega} \colon \Ker \omega \longto M \times \R\,,$
is a vector bundle morphism, which is surjective thanks to condition~(ii). 
We can then apply the splitting lemma (see for instance 
\cite[p.\,118]{Die_EA3})
to the short exact sequence
$$
0 \longrightarrow \K_{(\tau,\omega)} \longrightarrow \Ker \omega \xrightarrow{\ \phi\ } M \times \R \longrightarrow 0
$$
to obtain a map $j\colon M\times \R \to \Ker \omega$ such that $\phi \circ j = \Id$. Finally, we can use $j$ to lift the constant section $s\colon (x)\in M \mapsto (x,1)\in  M\times \R$ to a section of $\Ker \omega$ that satisfies the properties of the Reeb vector field by construction.

\item \textbf{Even case}.
Assume that the class of the doublet is even. 
By the same argument as before, this is equivalent to
\begin{equation}
\label{annihilator_inclusion_even}
(\Ker \tau)^{\circ} \subset (\Ker \omega)^{\circ}
\,,
\end{equation}
which amounts to
$$
\Ker \omega \subset \Ker \tau
\,.
$$
This proves the equivalence between (i) and~(ii).

If there exists a Liouville vector field $\Delta$, then condition~(ii) follows from the fact that, if $X\in \Ker \omega$, we have
$$\inn{X} \tau = \inn{X}\inn{\Delta} \omega = -\inn{\Delta}\inn{X}\omega =0\,.$$
Conversely, assume condition (ii). 
We have 
$$
\Ima \widehat\omega =
(\Ker \widehat\omega)^\circ \supset
(\Ker \widehat\tau)^\circ = 
\langle \tau \rangle
\,,
$$
where we have used Theorem~\ref{theorem_annihilator}
and~(ii).
But since
$0\to\Ker \widehat\omega \to 
\Tan M \to
\Ima \widehat\omega\to0$
is exact it splits, and thus we have a map $j\colon \Im \widehat \omega \to \T M$, such that $\widehat \omega\circ  j =\Id$, which can then be used to construct a section $\Delta = j\circ\tau$ satisfying the conditions of a Liouville vector field.
\end{itemize}
The statements at a given point don't require any regularity condition
and their proof is indeed easier.
\end{proof}

We finish this section with a short analysis of the Lie derivative of a doublet 
with respect to the Reeb or Liouville vector fields.
Its proof is direct from the definitions:
\begin{lemma}
For a Reeb vector field $R$,
$$
\Lie_R \tau = \inn{R} \d\tau \,, \qquad \Lie_{R} \omega = \inn{R} \d\omega\,;
$$
and for a Liouville vector field $\Delta$,
$$
\Lie_{\Delta} \tau = \inn{\Delta} \d \tau\,, \qquad \Lie_{\Delta} \omega  = \inn{\Delta} \d \omega + \d \tau\,.
$$
\end{lemma}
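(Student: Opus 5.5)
The plan is to apply Cartan's magic formula $\Lie_X = \inn{X}\d + \d\inn{X}$ to each of the four Lie derivatives and then use the defining equations \eqref{reeb_definition} and \eqref{Liouville_vf} to simplify the term $\d\inn{X}(\cdot)$. No regularity assumption (constant class) is needed; the identities are pointwise consequences of the definitions.

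For a Reeb vector field $R$ we have $\Lie_R\tau = \inn{R}\d\tau + \d(\inn{R}\tau)$. Since $\inn{R}\tau=1$ is a constant function, its differential vanishes, and this leaves $\Lie_R\tau=\inn{R}\d\tau$. In the same way, $\Lie_R\omega = \inn{R}\d\omega + \d(\inn{R}\omega)$, and the second term is zero because $\inn{R}\omega=0$.

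For a Liouville vector field $\Delta$, Cartan's formula gives $\Lie_\Delta\omega = \inn{\Delta}\d\omega + \d(\inn{\Delta}\omega)$. Substituting $\inn{\Delta}\omega=\tau$ turns this into $\inn{\Delta}\d\omega+\d\tau$. For $\tau$ we write $\Lie_\Delta\tau=\inn{\Delta}\d\tau+\d(\inn{\Delta}\tau)$. Here the key observation is that
$$
\inn{\Delta}\tau=\inn{\Delta}\inn{\Delta}\omega=0
$$
by the skew-symmetry of $\omega$, so the exact term drops out.

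The only step that is not a direct substitution is this last vanishing of $\inn{\Delta}\tau$, and it is immediate. I expect no real obstacle.
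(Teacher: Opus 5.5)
Your proof is correct and matches the paper's approach: the paper gives no written proof and only notes that the lemma is direct from the definitions. Your argument supplies exactly that, namely Cartan's formula combined with $\inn{R}\tau=1$, $\inn{R}\omega=0$, $\inn{\Delta}\omega=\tau$, and $\inn{\Delta}\tau=\inn{\Delta}\inn{\Delta}\omega=0$.
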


\subsection{Other characterizations of the class}

\begin{lemma}\label{lem:odd_conditions}
    Let $(\tau,\omega)$ be a pair of a differential 1-form and a 2-form and $x\in M$.  
Then $\cl(\tau_x,\omega_x)=2r+1$ if, and only if,
    \begin{equation}
            \omega_x ^{r+1} = 0\,,
            \qquad
            \tau_x \wedge \omega_x^r \neq 0\,.
    \end{equation}
\end{lemma}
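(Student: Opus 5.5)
Everything here is pointwise linear algebra at the fixed point $x$, so I will work on the vector space $V=\Tan_xM$ with $\tau=\tau_x$ and $\omega=\omega_x$. The plan is to combine three facts from above. Proposition~\ref{prop:doublet_conditions} says the class is $2r$ or $2r+1$ exactly when $\rk\omega=2r$. Theorem~\ref{thm:even_rank} says $\rk\omega=2r$ iff $\omega^r\neq0$ and $\omega^{r+1}=0$. Finally, the pointwise version of Theorem~\ref{thm:class-parity} (or equation~\eqref{eq:grassmann_doublets}) says that, given $\rk\omega=2r$, the class is $2r+1$ iff $\tau\neq0$ and $\langle\tau\rangle\not\subset(\Ker\omega)^\circ$. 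So the lemma reduces to the claim: if $\rk\omega=2r$, then $\tau\wedge\omega^r\neq0$ iff $\tau\notin(\Ker\omega)^\circ$.

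To prove the claim I would use a normal form. Choose a basis $e^1,\dots,e^m$ of $V^*$ with $\omega=\sum_{i=1}^r e^{2i-1}\wedge e^{2i}$, which exists for a $2$-form of rank $2r$. Then $(\Ker\omega)^\circ=\langle e^1,\dots,e^{2r}\rangle$, and $\omega^r=r!\,e^1\wedge\dots\wedge e^{2r}$. Write $\tau=\sum_j \tau_j e^j$. Then $\tau\wedge\omega^r=r!\sum_{j>2r}\tau_j\,e^j\wedge e^1\wedge\dots\wedge e^{2r}$, which vanishes iff $\tau_j=0$ for all $j>2r$, that is, iff $\tau\in(\Ker\omega)^\circ$. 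This gives the claim. Alternatively one could cite Proposition~\ref{rank_powers} to get $(\Ker\omega^r)^\circ=(\Ker\omega)^\circ$ and the fact that a $1$-form wedged with a decomposable form vanishes iff it lies in the span of its factors; the basis computation is more self-contained.

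Now assemble the two directions.

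\emph{($\Rightarrow$)} Suppose the class is $2r+1$. Then $\rk\omega=2r$, so $\omega^{r+1}=0$. By Theorem~\ref{thm:class-parity}, $\Ker\omega\not\subset\Ker\tau$, which is equivalent to $\tau\notin(\Ker\omega)^\circ$. The claim then gives $\tau\wedge\omega^r\neq0$.

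\emph{($\Leftarrow$)} Suppose $\omega^{r+1}=0$ and $\tau\wedge\omega^r\neq0$. Then $\omega^r\neq0$, so $\rk\omega=2r$ by Theorem~\ref{thm:even_rank}. The claim gives $\tau\notin(\Ker\omega)^\circ$, so the class is odd, and hence it equals $2r+1$.

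The only point that needs care is the normal-form step, namely the existence of the adapted basis and the identification $(\Ker\omega)^\circ=\langle e^1,\dots,e^{2r}\rangle$. This is standard and is implicit in Theorem~\ref{thm:even_rank}, so I do not expect a real obstacle.
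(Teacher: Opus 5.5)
Your proof is correct, but its key step differs from the paper's.

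Both arguments use the same reduction. Proposition~\ref{prop:doublet_conditions} and Theorem~\ref{thm:even_rank} bring everything down to the case $\rk\omega_x=2r$, where the class is $2r$ or $2r+1$, so only the parity remains to be decided.

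The paper decides the parity with the pointwise Reeb and Liouville vectors of Theorem~\ref{thm:class-parity}. If the class is odd, a Reeb vector gives $\inn{R_x}(\tau_x\wedge\omega_x^r)=\omega_x^r\neq0$. In the converse, if the class were even, a Liouville vector would give $0=\inn{\Delta_x}(\omega_x^{r+1})=(r+1)\,\tau_x\wedge\omega_x^r$, a contradiction.

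You instead use the linear normal form of $\omega_x$ to prove a parity-independent criterion: when $\rk\omega_x=2r$, one has $\tau_x\wedge\omega_x^r\neq0$ if and only if $\tau_x\notin(\Ker\omega_x)^\circ$. You then read off the parity from equation~\eqref{eq:grassmann_doublets}.

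The paper's route is basis-free and stays inside its own Reeb/Liouville framework. The same contraction pattern is reused in Lemma~\ref{lem:lepage}.

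Your route relies on the canonical form of a skew-symmetric form, which the paper only cites. As you note, Proposition~\ref{rank_powers} plus the divisibility criterion for decomposable forms would also do. In exchange you get a sharper, symmetric statement. It gives both directions at once, with no argument by contradiction. It also gives directly the first equivalence of Lemma~\ref{lem:lepage}: $\cl(\tau_x,\omega_x)=2r$ if and only if $\omega_x^r\neq0$, $\omega_x^{r+1}=0$ and $\tau_x\wedge\omega_x^r=0$.
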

\begin{proof}
    Assume that $\cl(\tau_x ,\omega_x)=2r+1$. Then, the rank of $\omega$ must be $2r$ by Proposition~\ref{prop:doublet_conditions}, which implies that
    $$\omega_x^{r}\neq 0, \qquad
            \omega_x ^{r+1} = 0\,.$$
    Now, as the class is odd, $\tau_x\neq 0$ and there exists a Reeb vector $R_x$. If we assume that $\tau_x \wedge \omega_x ^r = 0$, we have
    $$0=\inn{R_x} (\tau_x\wedge \omega_x^{r})= \omega_x^r$$
    which is a contradiction, and thus we must have $\tau_x \wedge \omega_x^r \neq 0$.
    
    For the converse, note that the two conditions imply that $\rk(\omega_x)=2r$ and that $\tau_x\neq0$. Thus, the class is either $2r$ or $2r+1$. However, if the class was even, then we would have existence of a Liouville vector $\Delta_x$ and 
    $$0=\inn{\Delta_x} (\omega_x^{r+1}) =(r+1)\tau_x \wedge \omega_x ^r\,,$$
    which is a contradiction, so the class must be $2r+1$.
\end{proof}

\begin{proposition}\label{conditions_odd_class}
   Let $(\tau,\omega)$ be a pair of a differential 1-form and a 2-form.  
Then $(\tau,\omega)$ is a doublet of class $2r+1$ if, and only if,
    \begin{equation}\label{eq:odd_conditions}
            \omega ^{r+1} = 0\,,
            \qquad
            \tau \wedge \omega^r \neq 0\,.
    \end{equation}
\end{proposition}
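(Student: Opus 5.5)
The plan is to reduce the statement to the pointwise Lemma~\ref{lem:odd_conditions}, applied at every point $x\in M$. The conditions in \eqref{eq:odd_conditions} are to be read pointwise: $\omega^{r+1}=0$ means $\omega_x^{r+1}=0$ for all $x$, and $\tau\wedge\omega^r\neq 0$ means $\tau_x\wedge\omega_x^r\neq0$ at every point, that is, the form is nowhere-vanishing. This matches the usage for contact forms in the introduction.

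For the direct implication, assume $(\tau,\omega)$ is a doublet of class $2r+1$. By definition of doublet the class is constant, so $\cl(\tau_x,\omega_x)=2r+1$ for every $x\in M$. Lemma~\ref{lem:odd_conditions} then gives $\omega_x^{r+1}=0$ and $\tau_x\wedge\omega_x^r\neq0$ at every $x$, which are exactly the conditions \eqref{eq:odd_conditions}.

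Conversely, assume the conditions \eqref{eq:odd_conditions} hold at every point. Applying the converse direction of Lemma~\ref{lem:odd_conditions} at each $x$ gives $\cl(\tau_x,\omega_x)=2r+1$ for all $x$. Hence the class is constant, so $(\tau,\omega)$ is a doublet, and its class is $2r+1$.

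There is no real obstacle here: all the algebra (the rank of $\omega$ from Theorem~\ref{thm:even_rank}, the existence of Reeb or Liouville vectors from the pointwise version of Theorem~\ref{thm:class-parity}) is already contained in the lemma. The only point needing care is the pointwise reading of ``$\neq 0$'', since it is everywhere-nonvanishing that guarantees constancy of the class. It is also worth noting that constancy of $r$ is built into the statement, which fixes a single $r$ for all points.
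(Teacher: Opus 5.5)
Your proposal is correct and matches the paper's proof, which simply says the result follows directly from Lemma~\ref{lem:odd_conditions} applied at each point. Your explicit remark that $\tau\wedge\omega^r\neq 0$ must be read as nowhere-vanishing is exactly the detail the paper leaves implicit: it is what makes the class equal to $2r+1$ at every point, hence constant, so that the pair is a doublet.
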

\begin{proof}
    The result follows directly from the previous lemma.
\end{proof}


\begin{lemma}\label{lem:lepage}
    Let $(\tau,\omega)$ be a pair of a differential 1-form and a 2-form and $x\in M$ a point.
Then, $\cl (\tau_x,\omega_x) =2r$ if, and only if,
\begin{equation}
    \omega_x^{r}\neq 0\,, 
            \qquad
            \omega_x^{r+1}=0\,,\qquad 
            \tau_x \wedge \omega_x^r =0\,.
    \end{equation}
If $\tau_x\neq 0$, then $\omega_x^{r}\neq 0$ and $\tau_x \wedge \omega_x^r =0$ imply that $\omega_x^{r+1}=0$.
\end{lemma}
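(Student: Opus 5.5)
The plan is to derive the even case from Theorem~\ref{thm:even_rank}, Proposition~\ref{prop:doublet_conditions} and Lemma~\ref{lem:odd_conditions}, and then to treat the final claim (with $\tau_x\neq0$) by a direct linear-algebra argument.

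\emph{Forward direction.} Assume $\cl(\tau_x,\omega_x)=2r$. By Proposition~\ref{prop:doublet_conditions}, $\rk\omega_x=2r$. Theorem~\ref{thm:even_rank} then gives $\omega_x^r\neq0$ and $\omega_x^{r+1}=0$. Since the class is even, the pointwise version of Theorem~\ref{thm:class-parity} provides a Liouville vector $\Delta_x$ with $\inn{\Delta_x}\omega_x=\tau_x$. Contracting $0=\omega_x^{r+1}$ with $\Delta_x$ gives $0=(r+1)\tau_x\wedge\omega_x^r$, so $\tau_x\wedge\omega_x^r=0$.

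\emph{Converse.} Assume the three conditions. The first two give $\rk\omega_x=2r$ by Theorem~\ref{thm:even_rank}, so by Proposition~\ref{prop:doublet_conditions} the class is $2r$ or $2r+1$. If it were $2r+1$, Lemma~\ref{lem:odd_conditions} would force $\tau_x\wedge\omega_x^r\neq0$, which contradicts the third condition. Hence the class is $2r$.

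\emph{Final claim.} Assume $\tau_x\neq0$, $\omega_x^r\neq0$ and $\tau_x\wedge\omega_x^r=0$; we must show $\omega_x^{r+1}=0$.
\begin{enumerate}
\item Since $\omega_x^r\neq0$, the rank of $\omega_x$ is $2s$ for some $s\geq r$.
\item Suppose $s>r$. Choose a Darboux basis with $\omega_x=\sum_{i=1}^s e^{2i-1}\wedge e^{2i}$, where the covectors $e^1,\dots,e^{2s}$ span $(\Ker\omega_x)^\circ$, and complete them to a basis of $\T_x^*M$.
\item Write $\tau_x$ in this basis. Its component in the span of the complementary covectors gives a nonzero contribution to $\tau_x\wedge\omega_x^r$ unless it vanishes.
\item If that component vanishes, $\tau_x$ involves some $e^{j}$. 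Pick a monomial of $\omega_x^r$ that avoids the pair containing $e^j$. Such a monomial exists because $s>r$, i.e.\ there are more than $r$ pairs available. Its product with $\tau_x$ is nonzero, and it cannot cancel against any other term.
\item Either way $\tau_x\wedge\omega_x^r\neq0$, a contradiction. So $s=r$, and Theorem~\ref{thm:even_rank} gives $\omega_x^{r+1}=0$.
\end{enumerate}

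\emph{Main obstacle.} The delicate step is this non-cancellation argument, i.e.\ checking that the chosen monomial of $\tau_x\wedge\omega_x^r$ really survives. An alternative route to the same step is the classical fact that $\theta\wedge\omega^r=0$ with $\theta\neq0$ and $\rk\omega\ge 2r+2$ is impossible; this follows from injectivity of the Lefschetz-type map $\theta\mapsto\theta\wedge\omega^r$ when $r\le s-1$.
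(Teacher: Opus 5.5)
Your proof is correct. The forward direction coincides with the paper's. Your converse, which excludes class $2r+1$ through Lemma~\ref{lem:odd_conditions}, is the paper's Reeb-vector contraction in packaged form.

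The genuine difference is in the final (Lepage) claim. The paper stays inside its Reeb/Liouville framework:
\begin{itemize}
\item a Reeb contraction shows that $\omega_x^r\neq0$ and $\tau_x\wedge\omega_x^r=0$ force the class to be even, so there is a Liouville vector $\Delta_x$;
\item since $\tau_x\neq0$, $\Delta_x$ is not in $\Ker\omega_x$, yet $\inn{\Delta_x}\omega_x^{r+1}=(r+1)\,\tau_x\wedge\omega_x^r=0$;
\item Proposition~\ref{rank_powers} (the kernel of $\omega_x^k$ equals $\Ker\omega_x$ as long as $\omega_x^k\neq0$) then rules out $\omega_x^{r+1}\neq0$.
\end{itemize}

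You instead show directly, in a Darboux basis, that a nonzero covector wedged with $\omega_x^r$ cannot vanish when $\rk\omega_x\geq 2r+2$. Your non-cancellation step is sound. The component of $\tau_x$ along the complementary covectors lands in a different bidegree. The products of $e^j$ with the monomials of $\omega_x^r$ that avoid the pair of $e^j$ are pairwise distinct basis elements, because the unpaired index recovers $j$ and the complete pairs recover the monomial.

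The paper's route is shorter and reuses the machinery of the section, but it relies on the cited Proposition~\ref{rank_powers}, whose proof is itself a normal-form computation. Yours is self-contained linear algebra and does not need to establish the parity of the class or produce a Liouville vector first.
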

\begin{proof}
Suppose that $\cl(\tau_x,\omega_x)=2r$. 
Then, it follows from Proposition \ref{prop:doublet_conditions} that 
$\rk \omega_x=2r$. 
Hence, by Theorem~\ref{thm:even_rank}, we have
    $$\omega_x^{r}\neq 0, \qquad
            \omega_x ^{r+1} = 0\,.$$
Now, as the class is even, there exists a Liouville vector $\Delta_x$, and we have
    $$0=\inn{\Delta_x} (\omega_x^{r+1})= (r+1)\tau_x\wedge\omega_x^r\,,$$
which implies that $\tau_x\wedge \omega_x^r=0$.

For the converse,
first note that if $\omega^r_x\neq0$ and $\tau_x \wedge \omega^r_x=0$ then the class must be even. Indeed, if the class was odd, there would exist a Reeb vector $R_x$ such that
$$
0=\inn{R_x} (\tau_x \wedge \omega_x ^r)=\omega_x^r\,,
$$
which is a contradiction with $\omega_x^r\neq0$. 

If we, additionally, assume that $\tau_x\neq 0$, then from the fact there exists a Liouville vector $\Delta_x$, and
$$
\inn{\Delta_x} (\omega_x^{r+1})= (r+1)\tau_x\wedge\omega_x^r=0\,,
$$
we have, by Proposition~\ref{rank_powers},
that necessarily $\omega_x^{r+1}=0$, as $\Delta_x \notin \Ker \omega_x$.

The conditions $\omega_x^r\neq 0$ and $\omega_x^{r+1}=0$ imply that $\omega_x$ has rank $2r$, by Theorem~\ref{thm:even_rank}, and so the class must also be $2r$.

\end{proof}

\begin{remark}
  The last result in the preceding lemma is also known as Lepage's theorem, see \cite[Prop.\,2.5]{LM_1987}.  
\end{remark}

\begin{remark}
    Observe that if $\cl(\tau_x,\omega_x)=2r$ and $\tau_x\neq 0$, then as a direct consequence of Proposition~\ref{rank_powers} and the existence of a Liouville vector field, we have
    $$\tau_x \wedge \omega_x ^i \neq 0\,,$$
    for all $i=1,\dotsc, r-1$.
\end{remark}

\begin{proposition}\label{prop:even_conditions}
  
    Let $(\tau,\omega)$ be a pair of a differential 1-form and a 2-form. The pair $(\tau,\omega)$ is a doublet of class $2r$ if, and only if,
    \begin{equation}\label{eq:even_conditions}
\omega^r \neq 0\,, \qquad \omega ^{r+1}=0\,, \qquad \tau\wedge \omega^r=0\,.
    \end{equation}

    If, additionally, $\tau$ is nowhere-vanishing, then
    $(\tau,\omega)$ is a doublet of class $2r$ if, and only if,
    $$\omega^r \neq 0\,, \qquad \tau\wedge \omega^r=0\,.$$
    
\end{proposition}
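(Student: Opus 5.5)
The proof will reduce the global statement to Lemma~\ref{lem:lepage}, applied pointwise, in the same way that Proposition~\ref{conditions_odd_class} was deduced from Lemma~\ref{lem:odd_conditions}. Throughout, a condition such as $\omega^r\neq 0$ is read as holding at every point of~$M$, while an equality such as $\omega^{r+1}=0$ is the usual vanishing of a form. With this reading, $(\tau,\omega)$ is a doublet of class $2r$ exactly when $\cl(\tau_x,\omega_x)=2r$ for every $x\in M$, since a doublet is by definition a pair of constant class.

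For the first equivalence, suppose $(\tau,\omega)$ is a doublet of class $2r$. At each point $x$, the direct implication of Lemma~\ref{lem:lepage} gives $\omega_x^r\neq0$, $\omega_x^{r+1}=0$ and $\tau_x\wedge\omega_x^r=0$. Since these hold at every point, we obtain \eqref{eq:even_conditions}. Conversely, suppose the three conditions in \eqref{eq:even_conditions} hold at every point $x$. The converse implication of Lemma~\ref{lem:lepage} then gives $\cl(\tau_x,\omega_x)=2r$ for all $x$. The class is therefore constant, so the pair is a doublet, and its class is $2r$.

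For the second statement, assume in addition that $\tau$ is nowhere-vanishing. The direct implication is a special case of the first part. For the converse, assume $\omega^r\neq 0$ and $\tau\wedge\omega^r=0$ everywhere. At each point $x$ we have $\tau_x\neq 0$, so the final assertion of Lemma~\ref{lem:lepage} (Lepage's theorem) yields $\omega_x^{r+1}=0$. All three conditions of \eqref{eq:even_conditions} are then satisfied, and the first part shows that $(\tau,\omega)$ is a doublet of class $2r$.

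There is no substantial obstacle, because all the algebraic work is contained in Lemma~\ref{lem:lepage}. The only point needing care is the interpretation of the global statements: the non-vanishing conditions must be understood pointwise, at every point of $M$, and not merely as the form $\omega^r$ being nonzero as an element of $\Omega^{2r}(M)$. It is this pointwise reading that lets Lemma~\ref{lem:lepage} give class exactly $2r$ at every point, and hence a constant class.
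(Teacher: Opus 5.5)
Your proposal is correct and matches the paper, whose proof says only that the result ``follows directly from the previous lemma''. Your pointwise application of Lemma~\ref{lem:lepage}, including its final Lepage-type assertion for nowhere-vanishing $\tau$, spells out what that one line leaves implicit, and reading $\omega^r\neq 0$ pointwise agrees with how the paper uses such conditions in Proposition~\ref{conditions_odd_class}.
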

\begin{proof}
    This result follows directly from the previous lemma.
\end{proof}


We can combine the two preceding propositions to obtain 
a relation between the class of a pair and 
some of their algebraic properties:

\begin{corollary}\label{cor:prepair_rank}
Let $(\tau, \omega)$ be a pair of a 1-form and a 2-form. 
The following statements are equivalent:
\begin{enumerate}[$(i)$]
    \item $\tau$ is nowhere-vanishing and $(\tau,\omega)$ has constant class, either $2r+1$ or $2r+2$.
    \item $\omega$ has constant rank,
    $\tau\wedge \omega^r \neq 0$, and $\tau \wedge \omega^{r+1} = 0$.
\end{enumerate}
    Provided the previous equivalent conditions hold, 
    if $\omega^{r+1} = 0$ then the class is odd, 
    and if $\omega^{r+1}\neq 0$ then the class is even.
\end{corollary}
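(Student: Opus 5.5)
The plan is to work pointwise throughout, using Lemma~\ref{lem:odd_conditions} and Lemma~\ref{lem:lepage}, and then use constancy of the rank of $\omega$ to glue the pointwise information into a constant class.

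\textbf{$(i)\Rightarrow(ii)$.} By Proposition~\ref{prop:doublet_conditions}(a), constant class forces $\omega$ to have constant rank. I then split into two cases.
\begin{itemize}
\item If the class is $2r+1$, Proposition~\ref{conditions_odd_class} gives $\omega^{r+1}=0$ and $\tau\wedge\omega^r\neq0$. Hence $\tau\wedge\omega^{r+1}=0$ trivially.
\item If the class is $2r+2=2(r+1)$, Proposition~\ref{prop:even_conditions} (with $r+1$ in place of $r$) gives $\omega^{r+1}\neq0$, $\omega^{r+2}=0$ and $\tau\wedge\omega^{r+1}=0$. It remains to show $\tau\wedge\omega^r\neq0$; this is the only nontrivial step, and I would argue as in the remark after Lemma~\ref{lem:lepage}.
\end{itemize}
At each point the class is even, so Theorem~\ref{thm:class-parity} provides a Liouville vector $\Delta_x$. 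Since $\tau_x\neq0$, we have $\inn{\Delta_x}\omega_x=\tau_x\neq0$, so $\Delta_x\notin\Ker\omega_x$. By Proposition~\ref{rank_powers}, applied with $\rk\omega=2r+2$ and $s=r+1$, it follows that $\Delta_x\notin\Ker\omega_x^{r+1}$. Therefore
$$0\neq\inn{\Delta_x}\omega_x^{r+1}=(r+1)\,\tau_x\wedge\omega_x^r\,.$$
In the degenerate case $r=0$ the claim is simply $\tau_x\neq0$, which holds by hypothesis.

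\textbf{$(ii)\Rightarrow(i)$.} From $\tau\wedge\omega^r\neq0$ everywhere, $\tau$ is nowhere-vanishing. I fix $x\in M$ and distinguish two cases.
\begin{itemize}
\item If $\omega_x^{r+1}=0$, Lemma~\ref{lem:odd_conditions} gives $\cl(\tau_x,\omega_x)=2r+1$, and then $\rk\omega_x=2r$.
\item If $\omega_x^{r+1}\neq0$, then together with $\tau_x\wedge\omega_x^{r+1}=0$ and $\tau_x\neq0$, the last assertion of Lemma~\ref{lem:lepage} (Lepage) yields $\omega_x^{r+2}=0$. Hence $\cl(\tau_x,\omega_x)=2r+2$ and $\rk\omega_x=2r+2$.
\end{itemize}
Since $\rk\omega$ is constant on $M$, the same case occurs at every point. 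So the class is constant, equal to $2r+1$ or $2r+2$, which proves $(i)$.

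The final assertion is read off from this case analysis: $\omega^{r+1}=0$ corresponds to odd class and $\omega^{r+1}\neq0$ to even class. The main obstacle is the nonvanishing of $\tau\wedge\omega^r$ in the even case, and it is handled by the Liouville vector together with Proposition~\ref{rank_powers}.
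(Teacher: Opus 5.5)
Your proof is correct and follows essentially the route the paper intends: combining Propositions~\ref{conditions_odd_class} and~\ref{prop:even_conditions} through their pointwise versions, Lemmas~\ref{lem:odd_conditions} and~\ref{lem:lepage}, and using the constant rank of $\omega$ to exclude a pointwise switch between classes $2r+1$ and $2r+2$. The paper leaves implicit the step $\tau\wedge\omega^r\neq0$ in the even case, and you handle it exactly as in the remark following Lemma~\ref{lem:lepage}, with a Liouville vector and Proposition~\ref{rank_powers}.
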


\section{Geometrical structures associated with a doublet}
\label{ch_3}

In this section we introduce some geometrical objects that are associated with a doublet. Namely, we present and study the characteristic tensor, the extended characteristic distribution and the extended 3-form, and relate them to the class of the doublet and the Reeb or Liouville vector fields.

\subsection{The characteristic tensor field}

In the following a 2-covariant tensor field $B$ constructed on the basis of a pair $(\tau,\omega)$ will prove to be especially useful.
We introduce it as a sum of its symmetric and skew-symmetric parts:

\begin{definition}
The \dfn{characteristic tensor field} associated
with a pair $(\tau,\omega)$
is the 2-covariant tensor field given by
\begin{equation}
B = \tau \otimes \tau + \omega 
\,.
\end{equation}
\end{definition}

In general $B$ is not symmetric nor skew-symmetric.
This tensor field defines a vector bundle morphism,
$$
\begin{array}{rccl}
\widehat B \colon & \Tan M & \longto & \Tan^*M
\\ 
& v & \longmapsto & B(v,\cdot) = \inn{v}B = (\inn{v}\tau) \tau + \inn{v}\omega\,.
\end{array}
$$
So we have also a $\Cinfty(M)$-module morphism
that we can simply write as
$$
\begin{array}{rccl}
\widehat{B}\colon & \mathfrak{X}(M) & \longto & \Omega^1(M) \\
& X & \longmapsto & B(X,\cdot) = \inn{X}B =(\inn{X} \tau) \tau  +\inn{X} \omega \,.
\end{array}
$$

\begin{proposition}\label{prop:kernel_B}
Let $(\tau,\omega)$ be a pair of a differential 1-form and a 2-form. 
Then,
    $$\K_{(\tau,\omega)}=\Ker\tau \cap \Ker \omega = \Ker \widehat B = (\Im \widehat B)^{\circ}\,,$$
    holds at every point.
Thus,
$$\K_{(\tau,\omega)}^{\circ} =( \Ker \widehat B)^{\circ}=\Im \widehat B \,,$$
and, hence, the class of the pair is equal to the dimension of $\Im \widehat B$.
\end{proposition}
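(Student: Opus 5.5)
Everything here is pointwise linear algebra, so we fix $x\in M$ and work in $V=\Tan_xM$ with $\tau_x,\omega_x$. The plan has three steps: first identify $\Ker\widehat B$ with $\Ker\tau\cap\Ker\omega$, then identify it with $(\Im\widehat B)^\circ$, and finally take annihilators and dimensions.

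\textbf{Step 1: $\Ker\tau\cap\Ker\omega\subset\Ker\widehat B$.} If $\inn{v}\tau=0$ and $\inn{v}\omega=0$, then $\inn{v}B=(\inn{v}\tau)\tau+\inn{v}\omega=0$ directly from the formula for $\widehat B$.

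\textbf{Step 2: $\Ker\widehat B\subset\Ker\tau\cap\Ker\omega$.} This is the only step with real content. Suppose $(\inn{v}\tau)\tau+\inn{v}\omega=0$ and contract this $1$-form with $v$ itself. Since $\omega$ is skew, $\inn{v}\inn{v}\omega=0$, so we obtain $(\inn{v}\tau)^2=0$. Hence $\inn{v}\tau=0$, and substituting back gives $\inn{v}\omega=0$.

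\textbf{Step 3: $\Ker\widehat B=(\Im\widehat B)^\circ$.} Here $B$ is not symmetric, so we must be careful about which slot is contracted. We have $u\in(\Im\widehat B)^\circ$ if and only if $B(v,u)=0$ for all $v$, i.e.\ $u$ lies in the kernel of the transposed map $v\mapsto B(\cdot,v)=(\inn{v}\tau)\tau-\inn{v}\omega$. Its transpose is $\tau\otimes\tau-\omega$, the characteristic tensor of the pair $(\tau,-\omega)$. Steps 1 and 2 apply verbatim to that pair, so this kernel is $\Ker\tau\cap\Ker(-\omega)=\Ker\tau\cap\Ker\omega$. Alternatively, one can bypass the transpose altogether: for a linear map $V\to V^*$ one always has $(\Im\widehat B)^\circ=\Ker\widehat{B^t}$, and both $\widehat B$ and $\widehat{B^t}$ have the same kernel by the argument above.

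\textbf{Conclusion.} Taking annihilators in finite dimension gives $\K^\circ=((\Im\widehat B)^\circ)^\circ=\Im\widehat B$. Since the class is $\dim\K^\circ$, it equals $\dim\Im\widehat B$, which is the rank of $B$.

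The main obstacle is Step 3, specifically not carelessly assuming that $B$ is symmetric. Treating the transpose as the characteristic tensor of $(\tau,-\omega)$ resolves it.
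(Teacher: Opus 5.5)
Your proof is correct. Steps 1 and 2 match the paper exactly: contract $(\inn{v}\tau)\tau+\inn{v}\omega=0$ with $v$ to get $(\inn{v}\tau)^2=0$.

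The two routes diverge only at the equality $\Ker\widehat B=(\Im\widehat B)^\circ$. The paper proves just the inclusion $\Ker\widehat B\subseteq(\Im\widehat B)^\circ$. It checks that $\inn{X}\widehat B(Y)=(\inn{Y}\tau)\inn{X}\tau-\inn{Y}\inn{X}\omega=0$ for every $X\in\K_{(\tau,\omega)}$, then concludes with a dimension count: $\dim(\Im\widehat B)^\circ=m-\rk\widehat B=\dim\Ker\widehat B$ by rank--nullity.

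You instead identify $(\Im\widehat B)^\circ$ with the kernel of the transposed map. You note that $\tau\otimes\tau-\omega$ is the characteristic tensor of $(\tau,-\omega)$, and reuse Steps 1--2 to get both inclusions at once.

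The paper's route is shorter and never needs the transpose. Yours needs no dimension comparison for that step, and it makes the structural reason explicit. The left and right radicals of the non-symmetric tensor $B$ coincide because $B$ and its transpose are characteristic tensors of two pairs with the same characteristic distribution. Both arguments rely on finite-dimensional duality for the final step $\K^\circ=\Im\widehat B$.
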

\begin{proof}
    Let us first see that $\Ker \tau \cap \Ker \omega =\Ker \widehat B$. The inclusion $\Ker \tau \cap \Ker \omega \subseteq\Ker \widehat B$ is trivial. For the other inclusion, assume that we have $\widehat B(X)=0$, i.e.\ 
    $$\widehat B(X)=(\inn{X} \tau)\tau +\inn{X}\omega = 0\,.$$
    Then, contracting again the expression by $X$, we obtain $(\inn{X} \tau)^2 = 0$, which implies that $\inn{X} \tau = 0$ and, hence, that $\inn{X} \omega = 0$. 
    So $X\in\Ker \tau \cap \Ker \omega$, as we wanted to see.

    Finally, we want to prove that $(\Im \widehat B)^{\circ} = \Ker \widehat B$. First we prove that $(\Im \widehat B)^{\circ} \supseteq \Ker \widehat B=\K_{(\tau,\omega)}$. Indeed, for any vector $Y$ and any $X\in \K_{(\tau,\omega)}$, we have
    $$\inn{X} \widehat B(Y) = (\inn{Y}\tau) \inn{X}\tau + \inn{X}\inn{Y}\omega =-\inn{Y}\inn{X}\omega = 0\,.$$
    As at every point both subspaces have the same dimension, they are necessarily equal.
\end{proof}

\begin{corollary}\label{cor:B_iso}
Let $(\tau,\omega)$ be a pair.
The following conditions are equivalent:
\begin{enumerate}[$(i)$]
\item 
$\widehat B$ is a vector bundle isomorphism.
\item 
The characteristic distribution of the pair is zero.
\item 
The class of the pair equals the dimension of the manifold.
\end{enumerate}
\end{corollary}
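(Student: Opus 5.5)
This corollary follows directly from Proposition~\ref{prop:kernel_B}, with the implications checked fibrewise at each point $x\in M$. The proposition gives $\Ker \widehat B_x = \K_{(\tau,\omega)_x}$ and $\Im \widehat B_x = \K_{(\tau,\omega)_x}^{\circ}$ at every point. In particular $\dim \Im \widehat B_x = \cl(\tau,\omega)_x$.

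For (i)$\Leftrightarrow$(ii), the map $\widehat B\colon \T M\to\T^*M$ is a smooth vector bundle morphism over the identity between bundles of equal rank $m=\dim M$. Such a morphism is a vector bundle isomorphism exactly when it is a linear isomorphism on each fibre. Smoothness of the inverse is automatic: in local trivializations the inverse matrix has entries that are rational in the entries of $\widehat B$, with nonvanishing determinant in the denominator. By rank--nullity on each fibre, $\widehat B_x$ is bijective if and only if $\Ker \widehat B_x=0$. By Proposition~\ref{prop:kernel_B} this is the condition $\K_{(\tau,\omega)_x}=0$ for all $x$, which is (ii).

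For (ii)$\Leftrightarrow$(iii), note that $\cl(\tau,\omega)_x=\codim \K_{(\tau,\omega)_x}=m-\dim\K_{(\tau,\omega)_x}$ by definition. So $\K_{(\tau,\omega)_x}=0$ if and only if $\cl(\tau,\omega)_x=m$. Equivalently, $\Im\widehat B_x$ is all of $\T^*_xM$, which recovers surjectivity and closes the loop.

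There is no real obstacle here. The only point deserving a sentence is that fibrewise invertibility of a smooth bundle morphism yields a smooth inverse, so that (i) is genuinely a statement about vector bundles; the local-trivialization argument above settles this.
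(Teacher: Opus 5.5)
Your proposal is correct and follows the route the paper intends: the paper gives no separate proof and treats the corollary as immediate from Proposition~\ref{prop:kernel_B}, via $\Ker\widehat B=\K_{(\tau,\omega)}$ fibrewise and $\cl(\tau,\omega)=\codim\K_{(\tau,\omega)}=\dim\Im\widehat B$. Your remark that fibrewise invertibility gives a smooth inverse (Cramer's rule in local trivializations) is a harmless detail the paper leaves implicit.
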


\begin{proposition}
\label{reeb_structure}
    Let $(\tau,\omega)$ be a doublet of odd class. Then, a vector field $X$ is a Reeb vector field for the doublet if and only if $\widehat B(X) = \tau$. Hence, all Reeb vector fields are given by $R=R_0+\Gamma$, where $R_0$ is a particular Reeb vector field and $\Gamma\in\K_{(\tau,\omega)}$.
\end{proposition}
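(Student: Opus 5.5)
The plan is to prove the two implications of the equivalence $\inn{X}\tau=1,\ \inn{X}\omega=0 \iff \widehat B(X)=\tau$ directly, and then read off the affine structure of the solution set from Proposition~\ref{prop:kernel_B}. Existence of at least one Reeb vector field $R_0$ is guaranteed by Theorem~\ref{thm:class-parity}, since the class is odd.

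For the direct implication, let $R$ be a Reeb vector field. Then
$$
\widehat B(R)=(\inn{R}\tau)\tau+\inn{R}\omega=\tau\,.
$$

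For the converse, suppose $\widehat B(X)=\tau$, that is, $(\inn{X}\tau-1)\tau+\inn{X}\omega=0$. The key step is to contract this identity with the Reeb vector field $R_0$:

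\begin{itemize}
\item Since $\inn{R_0}\inn{X}\omega=-\inn{X}\inn{R_0}\omega=0$ and $\inn{R_0}\tau=1$, the contraction gives $\inn{X}\tau-1=0$.
\item Substituting back then gives $\inn{X}\omega=0$, so $X$ is a Reeb vector field.
\end{itemize}

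An alternative is to contract with $X$ itself. This gives $(\inn{X}\tau-1)\inn{X}\tau=0$, which only yields $\inn{X}\tau\in\{0,1\}$ pointwise. The case $\inn{X}\tau=0$ would then give $\inn{X}\omega=0$ and hence $X\in\K_{(\tau,\omega)}$, so that $\tau=\widehat B(X)=0$. This contradicts $\tau$ being nowhere-vanishing, which holds by Proposition~\ref{prop:doublet_conditions}(b). The $R_0$ contraction avoids this case analysis, so I will use it.

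For the description of all Reeb fields, note that $\widehat B$ is $\Cinfty(M)$-linear. Hence $X$ satisfies $\widehat B(X)=\tau$ if and only if $\widehat B(X-R_0)=0$. By Proposition~\ref{prop:kernel_B}, this holds if and only if $X-R_0=\Gamma$ is a section of $\K_{(\tau,\omega)}$. Conversely, every $R_0+\Gamma$ with $\Gamma$ a section of $\K_{(\tau,\omega)}$ satisfies $\widehat B(R_0+\Gamma)=\tau$.

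There is no real obstacle here. The only point needing care is the converse direction, namely choosing to contract with a known Reeb field rather than with $X$.
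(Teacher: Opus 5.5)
Your argument is correct and is essentially the paper's proof. In both, contracting $(\inn{X}\tau-1)\tau+\inn{X}\omega=0$ with a Reeb field forces $\inn{X}\tau=1$, and the affine description then follows from $\Ker\widehat B=\K_{(\tau,\omega)}$ (Proposition~\ref{prop:kernel_B}), which you make explicit and the paper leaves implicit.

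There is one slip in the alternative you discard. If $\inn{X}\tau=0$ at a point, the equation gives $\inn{X}\omega=\tau$ there, not $\inn{X}\omega=0$. So $X_x$ would be a Liouville vector, and that is what odd class excludes (Theorem~\ref{thm:class-parity}); it does not lead to $\tau_x=0$.
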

\begin{proof}
    If $R$ is a Reeb vector field, it follows directly from its definition that $\widehat B(R)=\tau$. Conversely, if for some vector field $X\in \X (M)$ one has
    $$\widehat B(X) =  (\inn{X}\tau)\tau+\inn{X}\omega =\tau\quad \Longrightarrow\quad \inn{X}\omega=(1-\inn{X}\tau)\tau\,,$$
    then, contracting the expression with a Reeb vector field we obtain that, necessarily, $\inn{X}\tau=1$. From this, it follows that $\inn{X}\omega=0$. Hence, $X$ is a Reeb vector field.
\end{proof}
\begin{proposition}\label{prop:liouville_structure}
    Let $(\tau,\omega)$ be a doublet of even class. Then, a vector field $X$ is a Liouville vector field if and only if $\widehat B(X) = \tau$. Hence, all Liouville vector fields are given by $\Delta=\Delta_0+\Gamma$, where $\Delta_0$ is a particular Liouville vector field and $\Gamma\in\K_{(\tau,\omega)}=\Ker\omega$.
\end{proposition}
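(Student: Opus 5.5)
The plan mirrors the proof of Proposition~\ref{reeb_structure}, with the Reeb vector field replaced by the inclusion $\Ker\omega\subset\Ker\tau$ supplied by Theorem~\ref{thm:class-parity} in the even case. First, if $\Delta$ is a Liouville vector field, then $\inn{\Delta}\tau = \inn{\Delta}\inn{\Delta}\omega = 0$ by skew-symmetry of $\omega$. Hence
$$
\widehat B(\Delta) = (\inn{\Delta}\tau)\tau + \inn{\Delta}\omega = \tau\,.
$$

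Conversely, suppose $X$ satisfies $\widehat B(X)=\tau$, that is, $\inn{X}\omega = (1-\inn{X}\tau)\,\tau$. Contracting both sides with $X$ kills the left-hand side by skew-symmetry, so $0=(1-\inn{X}\tau)\,\inn{X}\tau$. This alone only says that $\inn{X}\tau\in\{0,1\}$ pointwise, so an extra argument is needed to exclude the value $1$. This is the step that requires care.

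To exclude it, I will contract instead with a Liouville vector field $\Delta_0$, which exists by Theorem~\ref{thm:class-parity} since the class is even. On one side,
$$
\inn{\Delta_0}\inn{X}\omega = -\inn{X}\inn{\Delta_0}\omega = -\inn{X}\tau\,.
$$
On the other side,
$$
(1-\inn{X}\tau)\,\inn{\Delta_0}\tau = 0\,,
$$
because $\inn{\Delta_0}\tau=\inn{\Delta_0}\inn{\Delta_0}\omega=0$. Therefore $\inn{X}\tau=0$, which gives $\inn{X}\omega=\tau$, so $X$ is a Liouville vector field.

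For the description of all Liouville vector fields, note that $\widehat B$ is $\Cinfty(M)$-linear. If $\Delta_0$ is one Liouville vector field, then $\Delta$ is Liouville if and only if $\widehat B(\Delta-\Delta_0)=0$. By Proposition~\ref{prop:kernel_B} this means $\Gamma=\Delta-\Delta_0\in\Ker\widehat B=\K_{(\tau,\omega)}$. Finally, in the even case Theorem~\ref{thm:class-parity} gives $\Ker\omega\subset\Ker\tau$, so $\K_{(\tau,\omega)}=\Ker\tau\cap\Ker\omega=\Ker\omega$, which is the stated identification. Alternatively, this last part can be checked directly: $\inn{\Delta}\omega=\inn{\Delta_0}\omega$ if and only if $\Delta-\Delta_0\in\Ker\omega$.
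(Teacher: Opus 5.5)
Your proof is correct and follows essentially the same route as the paper. The forward direction uses $\inn{\Delta}\tau=\inn{\Delta}\inn{\Delta}\omega=0$, and the converse contracts $\inn{X}\omega=(1-\inn{X}\tau)\,\tau$ with a Liouville vector field to force $\inn{X}\tau=0$. Your derivation of the affine description via $\Ker\widehat B=\K_{(\tau,\omega)}=\Ker\omega$ spells out the ``hence'' part, which the paper leaves implicit.
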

\begin{proof}
    If $\Delta$ is a Liouville vector field then one can check directly that $\widehat B(\Delta)=\tau$, as $\Delta\in \Ker\tau$ by definition. Conversely, if for some vector field $X$, one has
    $$  (\inn{X}\tau)\tau+\inn{X}\omega=\tau\quad \Longrightarrow\quad \inn{X}\omega=(1-\inn{X}\tau)\tau\,.$$
     Contracting this last expression with a Liouville vector field, 
    $$\inn{\Delta}\inn{X}\omega = -\inn{X}\tau=0\,,$$
    and so, necessarily, $X\in\Ker\tau$. This implies that $\inn{X}\omega=\tau$, so $X$ is a Liouville vector field.
\end{proof}
In this way, both Reeb and Liouville vector fields are characterized as the elements of the preimage~$\widehat B^{-1}(\tau)$.

\subsection{The extended characteristic distribution}

\begin{definition}
Let $(\tau,\omega)$ be a pair of a differential 1-form and a 2-form. 
The \dfn{extended characteristic distribution}%
\footnote{ In~\cite{GG_23} they refer to this just as the characteristic distribution.} 
at $x \in M $ is
$$
\mathcal{X}_{(\tau,\omega)_x} = 
\{ v  \in \Ker (\tau_x) \mid 
\inn{v} \omega_x = a \, \tau_x\,,  \text{ for some } a\in \R \}\,.
$$
\end{definition}
Notice that by definition 
$\K_{(\tau,\omega)x} \subseteq \mathcal{X}_{(\tau,\omega)x}$,
and if $\tau_x=0$ then the equality takes place.

\begin{lemma}
\label{prop_extended_dist}
Let $(\tau,\omega)$ be a pair of a differential 1-form and a 2-form and $x\in M$.
\begin{itemize}
    \item If the class of the pair at $x$ is odd, then
    $$ \mathcal{X}_{(\tau,\omega)_x} =  \K_{(\tau,\omega)_x} \,. $$

    \item  If the class of the pair at $x$ is even, then either
    $$ \tau_x = 0\quad\text{and}\quad\mathcal{X}_{(\tau,\omega)_x} =\K_{(\tau,\omega)_x} \,,$$
    or
    $$ \tau_x \neq 0 \quad \text{and}\quad\mathcal{X}_{(\tau,\omega)_x} = \K_{(\tau,\omega)_x} \oplus \left\langle \Delta_x \right\rangle\,,$$
    where $\Delta_x$ is any Liouville vector at $x$.
\end{itemize}
\end{lemma}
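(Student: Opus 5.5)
The plan is to work pointwise at $x$, using the Reeb and Liouville vectors given by the pointwise version of Theorem~\ref{thm:class-parity}. The basic move is a contraction trick: take $v\in\mathcal{X}_{(\tau,\omega)_x}$, so that $\inn{v}\tau_x=0$ and $\inn{v}\omega_x=a\,\tau_x$ for some $a\in\R$, and contract this identity with the distinguished vector of the pair in order to determine $a$.

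\textbf{Odd case.} A Reeb vector $R_x$ exists. Contracting $\inn{v}\omega_x=a\tau_x$ with $R_x$ gives on the left $\inn{R_x}\inn{v}\omega_x=-\inn{v}\inn{R_x}\omega_x=0$, and on the right $a\,\inn{R_x}\tau_x=a$. Hence $a=0$, so $v\in\Ker\omega_x\cap\Ker\tau_x=\K_{(\tau,\omega)_x}$. The reverse inclusion was already noted after the definition.

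\textbf{Even case.} If $\tau_x=0$, equality was already remarked after the definition, since the condition reduces to $\inn{v}\omega_x=0$. Now suppose $\tau_x\neq0$, and let $\Delta_x$ be any Liouville vector. First, $\Delta_x\in\mathcal{X}_{(\tau,\omega)_x}$: by antisymmetry $\inn{\Delta_x}\tau_x=\inn{\Delta_x}\inn{\Delta_x}\omega_x=0$, and $\inn{\Delta_x}\omega_x=1\cdot\tau_x$. Since $\mathcal{X}_{(\tau,\omega)_x}$ is a linear subspace, this gives $\K_{(\tau,\omega)_x}+\langle\Delta_x\rangle\subseteq\mathcal{X}_{(\tau,\omega)_x}$. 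For the reverse inclusion, take $v\in\mathcal{X}_{(\tau,\omega)_x}$ with $\inn{v}\omega_x=a\tau_x$. Then $\inn{v-a\Delta_x}\omega_x=0$ and $\inn{v-a\Delta_x}\tau_x=0$, so $v-a\Delta_x\in\K_{(\tau,\omega)_x}$. Finally, the sum is direct: if $c\Delta_x\in\K_{(\tau,\omega)_x}$, then $c\,\tau_x=\inn{c\Delta_x}\omega_x=0$, and $\tau_x\neq0$ forces $c=0$.

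There is no real obstacle here. The only point requiring care is to use the pointwise existence of Reeb and Liouville vectors, which (as noted at the end of the proof of Theorem~\ref{thm:class-parity}) needs no regularity assumption. The hypothesis $\tau_x\neq0$ is used exactly once, for directness of the sum. The result does not depend on the choice of $\Delta_x$, because any two Liouville vectors differ by an element of $\Ker\omega_x\subset\K_{(\tau,\omega)_x}$.
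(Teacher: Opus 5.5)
Your proof is correct and follows the paper's approach. In the odd case the paper argues that $a\neq 0$ would make $v/a$ a Liouville vector, which cannot coexist with the Reeb vectors of an odd-class pair; your contraction with $R_x$ is exactly that non-coexistence argument written out. In the even case the paper only says the result is immediate from the definition of Liouville vector, and your decomposition $v=a\Delta_x+(v-a\Delta_x)$, together with the directness check using $\tau_x\neq 0$, supplies precisely those omitted details.
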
   
\begin{proof} 
If the class is odd note that if 
$v\in \mathcal{X}_{(\tau,\omega)_x}$ 
then this implies that $\inn{v}\omega_x=0$, 
as otherwise we would have existence of Liouville vectors.
    If the class is even then the result follows immediately from the definition of Liouville vector.
\end{proof}

\begin{proposition}\label{prop:extended_char}
    Let $(\tau,\omega)$ be a doublet with $\tau$ nowhere-vanishing. The class is odd if, and only if,
    $$ \mathcal{X}_{(\tau,\omega)} =  \K_{(\tau,\omega)} \,. $$
    The class is even if, and only if,
    $$\mathcal{X}_{(\tau,\omega)} = \K_{(\tau,\omega)} \oplus \left\langle \Delta_0 \right\rangle \,,$$
    where $\Delta_0$ is any Liouville vector field of the doublet.
\end{proposition}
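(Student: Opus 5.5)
The plan is to deduce the global statement from the pointwise Lemma~\ref{prop_extended_dist}, using that a doublet has constant class and hence constant parity. Since $\tau$ is nowhere-vanishing, at each point $x$ only the second alternative of the even case of the lemma can occur. So we get either $\mathcal{X}_{(\tau,\omega)_x}=\K_{(\tau,\omega)_x}$ (odd class) or $\mathcal{X}_{(\tau,\omega)_x}=\K_{(\tau,\omega)_x}\oplus\langle\Delta_x\rangle$ (even class).

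For the forward implications I argue as follows. If the class is odd, the first item of the lemma gives $\mathcal{X}=\K$ at every point. If the class is even, Theorem~\ref{thm:class-parity} provides a global Liouville vector field $\Delta_0$, and by Proposition~\ref{prop:liouville_structure} every Liouville vector field has this form up to an element of $\K$. Evaluating at $x$, $(\Delta_0)_x$ is a Liouville vector at $x$. The lemma then gives $\mathcal{X}_x=\K_x\oplus\langle(\Delta_0)_x\rangle$ pointwise, which is the claimed equality of distributions.

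For the converses I use a dimension count and the fact that the two alternatives are exclusive. If $\tau_x\neq0$ and the class is even, then the Liouville vector $\Delta_x$ satisfies $\inn{\Delta_x}\omega_x=\tau_x\neq0$, so $\Delta_x\notin\K_x$. Hence $\dim\mathcal{X}_x=\dim\K_x+1$ in the even case and $\dim\mathcal{X}_x=\dim\K_x$ in the odd case. Since the class is constant, exactly one case holds:
\begin{itemize}
\item if $\mathcal{X}=\K$, the class cannot be even, so it is odd;
\item if $\mathcal{X}=\K\oplus\langle\Delta_0\rangle$ with $\Delta_0$ a Liouville vector field, then the mere existence of a Liouville vector field already forces even class by Theorem~\ref{thm:class-parity}.
\end{itemize}

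The only delicate point is the hypothesis that $\tau$ is nowhere-vanishing. Without it, the even case could give $\mathcal{X}=\K$ wherever $\tau_x=0$, and the first equivalence would fail. I will use it exactly to exclude the first alternative of the even case and to ensure $\Delta_x\notin\K_x$.
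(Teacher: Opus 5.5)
Your proposal is correct and follows essentially the same route as the paper, which proves the statement in one line by appeal to the pointwise Lemma~\ref{prop_extended_dist}, using that $\tau$ is nowhere-vanishing and the class is constant. Your dimension count ($\Delta_x\notin\K_x$ because $\inn{\Delta_x}\omega_x=\tau_x\neq0$) and your remark that the existence of a Liouville vector field already forces even class make explicit the converse directions that the paper leaves implicit.
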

\begin{proof}
    This follows directly from the preceding lemma. 
\end{proof}


\begin{corollary}
\label{cor:extended_corollary}
   A doublet $(\tau,\omega)$, with $\tau$ nowhere-vanishing, has class $2r+1$ or $2r+2$ if and only if the extended characteristic distribution $\mathcal{X}_{(\tau,\omega)}$ is a regular distribution of rank $m-2r-1$.
\end{corollary}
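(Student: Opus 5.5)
The plan is to compute the rank of $\mathcal{X}_{(\tau,\omega)}$ directly from Proposition~\ref{prop:extended_char}, in terms of the (constant) class of the doublet. Then in each direction we only compare dimensions. Throughout, $m=\dim M$. Since $(\tau,\omega)$ is a doublet, its class $c$ is constant, so $\K_{(\tau,\omega)}$ has constant rank $m-c$. By the definition of the class, $\K_{(\tau,\omega)}$ is the kernel of the constant-rank bundle morphism $\widehat B$ (Proposition~\ref{prop:kernel_B}), hence a smooth regular distribution.

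For the forward implication, first suppose $c=2r+1$. Then Proposition~\ref{prop:extended_char} gives $\mathcal{X}_{(\tau,\omega)}=\K_{(\tau,\omega)}$, which is regular of rank $m-2r-1$.

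Now suppose $c=2r+2$. Theorem~\ref{thm:class-parity} provides a global smooth Liouville vector field $\Delta_0$, and Proposition~\ref{prop:extended_char} gives $\mathcal{X}_{(\tau,\omega)}=\K_{(\tau,\omega)}\oplus\langle\Delta_0\rangle$. Since $\tau$ is nowhere-vanishing, $\inn{\Delta_0}\omega=\tau\neq0$ at every point, so $\Delta_0$ never lies in $\K_{(\tau,\omega)}$ and the sum is direct pointwise. Hence $\mathcal{X}_{(\tau,\omega)}$ is spanned locally by smooth local generators of $\K_{(\tau,\omega)}$ together with $\Delta_0$. It is therefore a smooth distribution of constant rank $(m-2r-2)+1=m-2r-1$.

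For the converse, assume $\mathcal{X}_{(\tau,\omega)}$ is regular of rank $m-2r-1$ and argue by the parity of the constant class $c$, again using Proposition~\ref{prop:extended_char}. If $c=2s+1$, then $\operatorname{rank}\mathcal{X}=m-2s-1$, which forces $s=r$, so $c=2r+1$. If $c=2s$, then $\operatorname{rank}\mathcal{X}=m-2s+1$, which forces $s=r+1$, so $c=2r+2$.

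The only delicate point is the regularity (smoothness and constant rank) of $\mathcal{X}$ in the even case. This is where the global existence of a Liouville vector field (Theorem~\ref{thm:class-parity}) and the nowhere-vanishing hypothesis on $\tau$ are essential. Without that hypothesis, the extra direction $\langle\Delta_x\rangle$ could disappear at the zeros of $\tau$ (Lemma~\ref{prop_extended_dist}), and the rank would drop there.
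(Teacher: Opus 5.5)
Your proof is correct and follows the argument the paper leaves implicit: the corollary is stated without proof as an immediate consequence of Proposition~\ref{prop:extended_char}, by reading off the rank of $\mathcal{X}_{(\tau,\omega)}$ in each parity case and comparing dimensions. Your extra check that $\K_{(\tau,\omega)}\oplus\langle\Delta_0\rangle$ is a smooth subbundle of constant rank supplies the regularity detail the paper omits; it uses the global Liouville vector field together with $\tau\neq 0$, which keeps $\Delta_0$ out of $\K_{(\tau,\omega)}$.
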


Notice the similarity of this statement to that of 
Proposition~\ref{prop:doublet_conditions}.
We also have the following result that parallels Proposition~\ref{prop:extended_char}.

\begin{proposition}\label{prop:ker_omega}
Let $(\tau,\omega)$ be a doublet.
 The class of $(\tau,\omega)$ is odd if, and only if,
    $$\Ker \omega =\K_{(\tau,\omega)} \oplus \left\langle R_0 \right\rangle  \,,$$
    where $R_0$ is any Reeb vector field of the pair~${(\tau,\omega)}$.
The class $(\tau,\omega)$ is even if, and only if,
$$\Ker \omega = \K_{(\tau,\omega)}\,.$$
\end{proposition}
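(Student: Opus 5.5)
We prove both equivalences by working pointwise. Since the class of a doublet is constant, it has a fixed parity, so the two cases exclude each other. It therefore suffices to prove the two forward implications. The converses then follow: for instance, if $\Ker\omega = \K_{(\tau,\omega)}$ held while the class were odd, the forward odd statement would give $\Ker\omega = \K_{(\tau,\omega)}\oplus\langle R_0\rangle$ with $R_0\neq 0$. This is a contradiction, since $\inn{R_0}\tau=1$ forces $R_0 \notin \K_{(\tau,\omega)}$, so the two descriptions of $\Ker \omega$ have different ranks. The same argument works in the other direction.

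\emph{Even case.} By Theorem~\ref{thm:class-parity}, even class is equivalent to $\Ker\omega\subset\Ker\tau$. Hence $\K_{(\tau,\omega)} = \Ker\tau\cap\Ker\omega = \Ker\omega$, which is the stated identity.

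\emph{Odd case.} By Theorem~\ref{thm:class-parity}, a Reeb vector field $R_0$ exists. Take any Reeb vector field $R_0$ and any $v\in\Ker\omega_x$. Set $a = \inn{v}\tau_x$ and decompose
$$
v = (v - a\,R_0(x)) + a\,R_0(x)\,.
$$
The first summand lies in $\Ker\omega_x$, because both $v$ and $R_0(x)$ do, and it lies in $\Ker\tau_x$, because $\inn{v}\tau_x - a\,\inn{R_0}\tau_x = a - a = 0$. So the first summand belongs to $\K_{(\tau,\omega)_x}$. Conversely, $\K_{(\tau,\omega)}$ and $R_0$ are both contained in $\Ker\omega$, so
$$
\Ker\omega = \K_{(\tau,\omega)} + \langle R_0\rangle\,.
$$
The sum is direct because $\inn{R_0}\tau = 1\neq0$, whereas $\tau$ vanishes on $\K_{(\tau,\omega)}$.

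There is no real obstacle here. The only point needing care is the phrase ``any Reeb vector field'': the argument uses nothing about $R_0$ beyond its defining relations, so it holds for every choice. This is consistent with Proposition~\ref{reeb_structure}, since another choice differs from $R_0$ only by a section of $\K_{(\tau,\omega)}$.
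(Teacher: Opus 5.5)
Your proof is correct. The even case is the same as the paper's: Theorem~\ref{thm:class-parity} gives $\Ker\omega\subset\Ker\tau$, hence $\K_{(\tau,\omega)}=\Ker\omega$.

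The odd case takes a different route. The paper only checks the inclusion $\K_{(\tau,\omega)}\oplus\langle R_0\rangle\subset\Ker\omega$ and then counts dimensions. Class $2r+1$ gives $\dim\K_{(\tau,\omega)}=m-2r-1$, and Proposition~\ref{prop:doublet_conditions} gives $\rk\omega=2r$, so $\dim\Ker\omega=m-2r$ and the two sides must coincide.

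You instead write down the explicit projection $v\mapsto v-(\inn{v}\tau)R_0$ of $\Ker\omega$ onto $\K_{(\tau,\omega)}$. This is pure linear algebra that uses only the defining relations of $R_0$, and it never appeals to the relation between the class and $\rk\omega$. The same splitting, up to sign, is what the paper later uses in the proof of Proposition~\ref{odd_class_stay}, where elements of $\K_{(\tau,\omega)}$ are written as $(\inn{\Gamma}\tau)R-\Gamma$ with $\Gamma\in\Ker\omega$. The paper's count is shorter once Proposition~\ref{prop:doublet_conditions} is available, but it gives no explicit complement.

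You also spell out the converse implications through the mutual exclusivity of the two parities, which the paper leaves implicit. In the paper, the odd converse follows from the mere existence of a Reeb vector field, and the even converse is already contained in Theorem~\ref{thm:class-parity}.
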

\begin{proof}
For even class, it follows directly from Theorem~\ref{thm:class-parity}. 
If the class is odd, then it is clear that $\K_{(\tau,\omega)} \oplus \left\langle R_0 \right\rangle$ is contained in $\Ker \omega$ for any Reeb vector field $R_0$.
    Moreover, if the pair has class $2r+1$, the distribution $\K_{(\tau,\omega)} \oplus \left\langle R_0 \right\rangle$ has dimension $ (m-2r-1)+1= m-2r$ and, as the rank of $\omega$ is $2r$, $\dim(\Ker \omega)=m-2r$. 
As the dimensions match, they must be equal.
\end{proof}

\subsection{The extended 3-form}

In this section we introduce a differential 3-form 
associated with a doublet and state its relation to the extended characteristic distribution and the class of the given doublet. 

\begin{definition}
The \dfn{extended $3$-form associated with a doublet} $(\tau,\omega)$ 
is the 3-form $\tau \wedge \omega$.
\end{definition}

\begin{lemma}
Suppose $\tau_x \neq 0$.
Then 
$\tau_x \wedge \omega_x = 0$ if and only if $\cl(\tau_x,\omega_x) \leq 2$.
\end{lemma}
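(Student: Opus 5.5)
We argue at the single point $x$, using the pointwise characterizations of the class in Lemma~\ref{lem:odd_conditions} and Lemma~\ref{lem:lepage}. Since $\tau_x\neq0$, the class is determined by the rank of $\omega_x$ together with whether $\tau_x\wedge\omega_x^r$ vanishes. We prove the two implications separately.

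\emph{Class at most $2$ implies $\tau_x\wedge\omega_x=0$.} Because $\tau_x\neq0$, the class is at least $1$, so it is $1$ or $2$.
\begin{itemize}
\item If $\cl(\tau_x,\omega_x)=1$, then by Lemma~\ref{lem:odd_conditions} with $r=0$ we get $\omega_x=0$. Hence $\tau_x\wedge\omega_x=0$.
\item If $\cl(\tau_x,\omega_x)=2$, then by Lemma~\ref{lem:lepage} with $r=1$ we get $\tau_x\wedge\omega_x=0$ directly.
\end{itemize}

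\emph{$\tau_x\wedge\omega_x=0$ implies class at most $2$.} Suppose $\tau_x\wedge\omega_x=0$ and let $\rk\omega_x=2r$. If $r=0$, then $\omega_x=0$ and the class is $1$. If $r\geq1$, then $\omega_x^r\neq0$ and
$$
\tau_x\wedge\omega_x^r=(\tau_x\wedge\omega_x)\wedge\omega_x^{r-1}=0,
$$
so by Lemma~\ref{lem:lepage} the class equals $2r$. It remains to show $r=1$.

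Since the class is even, there is a Liouville vector $\Delta_x$, that is, $\inn{\Delta_x}\omega_x=\tau_x$. Contract $\tau_x\wedge\omega_x=0$ with $\Delta_x$:
$$
0=\inn{\Delta_x}(\tau_x\wedge\omega_x)=(\inn{\Delta_x}\tau_x)\,\omega_x-\tau_x\wedge\tau_x=(\inn{\Delta_x}\tau_x)\,\omega_x.
$$
Now $\inn{\Delta_x}\tau_x=\inn{\Delta_x}\inn{\Delta_x}\omega_x=0$, so this identity carries no information. We therefore need a different argument at this step.

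\emph{Linear algebra argument for $r=1$.} Write $\omega_x$ in a symplectic-type basis as
$$
\omega_x=\sum_{i=1}^r e^{2i-1}\wedge e^{2i}.
$$
Since $\tau_x\in(\Ker\omega_x)^\circ=\langle e^1,\dots,e^{2r}\rangle$, we may expand $\tau_x$ in this span. For $r\geq2$ the component of $\tau_x\wedge\omega_x$ along $e^a\wedge e^{2j-1}\wedge e^{2j}$, for indices $a$ not in the pair $\{2j-1,2j\}$, is the coefficient of $e^a$ in $\tau_x$. These components are linearly independent, so $\tau_x\wedge\omega_x=0$ forces every coefficient of $\tau_x$ to vanish, i.e. $\tau_x=0$. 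This contradicts the hypothesis $\tau_x\neq0$, so $r=1$ and the class is $2$.

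A more invariant way to reach the same conclusion is the standard fact that wedging with a nonzero $1$-form is injective on $2$-forms of rank at least $4$. Equivalently, $\tau_x\wedge\omega_x=0$ means $\omega_x=\tau_x\wedge\beta$ for some $1$-form $\beta$, which has rank at most $2$. This divisibility statement is the key point of the proof.
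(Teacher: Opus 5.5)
Your proof is correct. The forward direction is the paper's argument; for the converse you take a different route.

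The paper's one-line proof gets the converse from the Lepage clause at the end of Lemma~\ref{lem:lepage}, taken with $r=1$. If $\tau_x\neq0$, $\omega_x\neq0$ and $\tau_x\wedge\omega_x=0$, then $\omega_x^2=0$, so $\rk\omega_x=2$ and the class is $2$; if $\omega_x=0$ the class is $1$.

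You instead apply Lemma~\ref{lem:lepage} with $2r=\rk\omega_x$ to get class $2r$. You then exclude $r\geq2$ by a normal-form computation, which in effect reproves the $r=1$ Lepage statement by hand.

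Your Liouville attempt failed only because you contracted the wrong form. The paper contracts $\omega_x^2$, not $\tau_x\wedge\omega_x$, giving $\inn{\Delta_x}\omega_x^2=2\,\tau_x\wedge\omega_x=0$. If $\rk\omega_x\geq4$, Proposition~\ref{rank_powers} gives $\Ker\omega_x^2=\Ker\omega_x$, which contradicts $\inn{\Delta_x}\omega_x=\tau_x\neq0$.

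What each route buys: your linear-algebra argument is self-contained, and its division-lemma form $\omega_x=\tau_x\wedge\beta$ shows the structure of $\omega_x$ explicitly. The paper's route is shorter and basis-free because it reuses lemmas already proved.

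One wording point in your last paragraph: wedging with $\tau_x$ is not literally injective on $2$-forms of rank $\geq4$. For instance, $\omega_x$ and $\omega_x+\tau_x\wedge\beta$ can both have rank $4$ and have the same image. What is true, and what you actually use, is that the kernel of $\tau_x\wedge\cdot\,$ contains no $2$-form of rank $\geq4$.
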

\begin{proof}
This is a consequence of Lemmas \ref{lem:odd_conditions} and \ref{lem:lepage}.
\end{proof}

In view of this, to avoid trivial cases,
in what follows we will consider only pairs $(\tau,\omega)$
with class~$\geq 3$,
and therefore manifolds of dimension~$\geq 3$.

\begin{proposition}
\label{kernel_3-form_extended_characteristic}
Let $(\tau,\omega)$ be a doublet such that $\tau$ is nowhere-vanishing, 
$\cl (\tau,\omega)\geq 3$. 
The kernel of the extended 3-form is the extended characteristic distribution: 
$$ 
\Ker (\tau \wedge \omega) = \mathcal{X}_{(\tau,\omega)}\,.
$$
\end{proposition}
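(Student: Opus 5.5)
We prove the two inclusions pointwise at an arbitrary $x\in M$, writing $\tau,\omega$ for $\tau_x,\omega_x$. The key formula is the contraction rule
$$
\inn{v}(\tau\wedge\omega) = (\inn{v}\tau)\,\omega - \tau\wedge\inn{v}\omega\,.
$$

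\emph{Inclusion $\mathcal{X}_{(\tau,\omega)}\subseteq \Ker(\tau\wedge\omega)$.} Let $v\in\mathcal{X}_{(\tau,\omega)}$. Then $\inn{v}\tau=0$ and $\inn{v}\omega = a\,\tau$ for some $a\in\R$. The formula gives
$$
\inn{v}(\tau\wedge\omega) = 0 - a\,\tau\wedge\tau = 0\,.
$$
This direction holds without any hypothesis.

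\emph{Inclusion $\Ker(\tau\wedge\omega)\subseteq \mathcal{X}_{(\tau,\omega)}$.} Let $\inn{v}(\tau\wedge\omega)=0$, and set $c=\inn{v}\tau$ and $\beta=\inn{v}\omega$, so that
$$
c\,\omega = \tau\wedge\beta\,.
$$
We first show $c=0$. Suppose $c\neq 0$. Then $\omega = c^{-1}\tau\wedge\beta$ is decomposable, hence $\rk\omega\le 2$. By Proposition~\ref{prop:doublet_conditions} this forces $\cl(\tau,\omega)\le 3$, so $\cl(\tau,\omega)=3$ and $\rk\omega=2$. By Lemma~\ref{lem:odd_conditions}, however, $\tau\wedge\omega\neq 0$, whereas $\tau\wedge\omega = c^{-1}\tau\wedge\tau\wedge\beta = 0$. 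This contradiction gives $c=0$; it is the only place where the hypothesis $\cl\ge3$ is used in an essential way.

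With $c=0$ we have $\tau\wedge\beta=0$. Since $\tau\neq0$, the standard division lemma for 1-forms gives $\beta = a\,\tau$ for some $a\in\R$. Together with $\inn{v}\tau=0$ this says exactly $v\in\mathcal{X}_{(\tau,\omega)}$.

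The main obstacle is ruling out $c\neq0$. The plan is to derive decomposability of $\omega$ and then use the rank/class bookkeeping above; if that route proved awkward, an alternative would be to contract with a Reeb or Liouville vector according to the parity of the class, as in Lemma~\ref{lem:lepage}.
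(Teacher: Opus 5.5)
Your proof is correct and follows the paper's argument. Both start from $(\inn{v}\tau)\,\omega=\tau\wedge\inn{v}\omega$, force $\inn{v}\tau=0$ using $\tau\wedge\omega\neq0$, and finish with the division lemma; the only difference is that the paper gets $\inn{v}\tau=0$ in one line by wedging with $\tau$ to obtain $(\inn{v}\tau)\,\tau\wedge\omega=0$ and citing the preceding lemma ($\tau_x\neq0$ and $\cl(\tau_x,\omega_x)\geq3$ give $\tau_x\wedge\omega_x\neq0$), so your detour through decomposability and rank bookkeeping is valid but unnecessary.
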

\begin{proof}
First notice that 
$X \in \Ker (\tau \wedge \omega)$ if and only if 
$$
(\inn{X} \tau) \omega = \tau\wedge (\inn{X} \omega)\,.
$$
From the definition of the extended characteristic distribution,
it is clear that 
$X\in \mathcal{X}_{(\tau,\omega)}$ implies 
$X\in \Ker {(\tau\wedge\omega)}$. 
Conversely, let $X$ be a vector field such that 
$(\inn{X} \tau) \,\omega = \tau\wedge (\inn{X} \omega)$. 
Then, left-wedging with $\tau$ on both sides of this expression, we obtain
$$
(\inn{X}\tau) \,\tau\wedge \omega =0\,.
$$
    As $\tau\wedge \omega \neq 0$ by the previous lemma, $\inn{X} \tau = 0$, and so
    $\tau\wedge (\inn{X} \omega)=0$. Hence, $\inn{X}\omega = f\tau$ for some function $f$, so $X \in \mathcal{X}_{(\tau,\omega)}$.
\end{proof}


Given a $p$-form $\alpha \in \Omega^p(M)$, if the map 
$v_x\in \T_x M\mapsto \inn{v_x} \alpha_x\in \bigwedge^{p-1} \T^*_x M$ 
is injective for every $x\in M$, 
then the form is said to be an \emph{almost-multisymplectic form} \cite{CIL_99}. 
If, additionally, the form is closed then it is said to be \emph{multisymplectic}.
The previous result allows us to characterize, in terms of the class, which 3-forms constructed as the wedge product of a 1-form and a 2-form are almost-multisymplectic.

\begin{corollary}
\label{3-form_injective}
Let $(\tau,\omega)$ be a pair of a 1-form and a 2-form on~$M$. 
The following statements are equivalent.
\begin{enumerate}[$(i)$]
\item 
The extended 3-form $\tau \wedge \omega$ is almost-multisymplectic.
\item 
$\tau$ is nowhere-vanishing and
the extended characteristic distribution $\mathcal{X}_{(\tau,\omega)}$ is zero.
\item 
$M$ is odd-dimensional and 
the class of the pair $(\tau,\omega)$ is equal to this dimension.
\end{enumerate}
\end{corollary}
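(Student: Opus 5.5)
We prove the cycle of implications $(i)\Rightarrow(ii)\Rightarrow(iii)\Rightarrow(i)$, working pointwise. The central tool is Proposition~\ref{kernel_3-form_extended_characteristic}; since its hypotheses include $\tau$ nowhere-vanishing and class $\geq 3$, the first task in each direction is to secure these hypotheses.

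\emph{$(i)\Rightarrow(ii)$.} Assume $\Ker(\tau\wedge\omega)=0$ at every point. Then $\tau_x\neq 0$ at every $x$, since $\tau_x=0$ would give $\tau_x\wedge\omega_x=0$ and hence a kernel equal to $\T_xM$, which is nonzero because $\dim M\geq 3$. The inclusion $\mathcal{X}_{(\tau,\omega)_x}\subseteq \Ker(\tau_x\wedge\omega_x)$ holds with no class hypothesis: if $\inn{v}\tau_x=0$ and $\inn{v}\omega_x=a\tau_x$, then $\inn{v}(\tau_x\wedge\omega_x)=-\tau_x\wedge a\tau_x=0$. Hence $\mathcal{X}_{(\tau,\omega)}=0$.

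\emph{$(ii)\Rightarrow(iii)$.} Since $\K_{(\tau,\omega)_x}\subseteq\mathcal{X}_{(\tau,\omega)_x}=0$, the class at every point equals $m=\dim M$, so the pair is a doublet. Suppose the class were even. Because $\tau_x\neq0$, Lemma~\ref{prop_extended_dist} gives $\mathcal{X}_x=\K_x\oplus\langle\Delta_x\rangle\neq0$, where $\Delta_x$ is a Liouville vector, which exists by Theorem~\ref{thm:class-parity}. This contradicts $\mathcal{X}_{(\tau,\omega)}=0$, so the class, and therefore $m$, is odd.

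\emph{$(iii)\Rightarrow(i)$.} Assume the class is constant, equal to $m$, and odd. By Proposition~\ref{prop:doublet_conditions}(b), $\tau$ is nowhere-vanishing. Lemma~\ref{prop_extended_dist} then gives $\mathcal{X}_{(\tau,\omega)}=\K_{(\tau,\omega)}=0$, where the last equality holds because the class equals $m$. It remains to show $\Ker(\tau\wedge\omega)=\mathcal{X}$. When $m\geq3$ this is exactly Proposition~\ref{kernel_3-form_extended_characteristic}, and we obtain injectivity.

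\textbf{Main obstacle: the low-dimensional cases.} If $m=1$, the class is $1$, $\omega=0$ and $\tau\wedge\omega=0$, which is not injective. So the statement implicitly relies on the standing convention, adopted just before Proposition~\ref{kernel_3-form_extended_characteristic}, that $\dim M\geq 3$. I would invoke that convention explicitly. Alternatively, I would verify the kernel equality directly whenever $\tau_x\wedge\omega_x\neq0$, by wedging with $\tau$ as in the proof of that proposition. This direct argument needs $\tau\wedge\omega\neq0$, which holds when the class is at least $3$, by Lemma~\ref{lem:odd_conditions} with $r\geq1$.
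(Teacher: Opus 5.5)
Your proof is correct and uses the same two ingredients as the paper's: Proposition~\ref{kernel_3-form_extended_characteristic} to connect (i) with the other statements, and Lemma~\ref{prop_extended_dist} for (ii)$\Leftrightarrow$(iii). Arranging it as a cycle, and using only the unconditional inclusion $\mathcal{X}_{(\tau,\omega)}\subseteq\Ker(\tau\wedge\omega)$ for (i)$\Rightarrow$(ii), spares you from verifying that proposition's hypotheses in that direction, which the paper's one-line appeal leaves implicit; your remark on $m=1$ is also right, since there (ii) and (iii) hold for every nowhere-vanishing $\tau$ while $\tau\wedge\omega=0$, so the corollary genuinely depends on the standing convention $\dim M\geq 3$.
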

\begin{proof}
    The equivalence between the first two statements comes from the previous proposition.
    The equivalence between the second and third statements follows from Lemma~\ref{prop_extended_dist}, as it implies that necessarily the characteristic distribution must be zero, so the class must equal the dimension of the manifold, and if the class was even then the extended characteristic distribution would have dimension~1.
\end{proof}


\section{Some important examples}
\label{sec_examples}

Many relevant geometric structures fit into the framework of doublets
presented so far.
Here we review some of them,
paying especial attention to the properties of the associated geometric structures.

\subsection{Contact structures}
A \dfn{contact form} on a manifold $M$ of dimension $2n+1$ is a 1-form $\eta$ such that
\begin{itemize}
\item 
$\eta \wedge (\d\eta)^n$ is a volume form on $M$, i.e.\ it is nowhere-vanishing.
\end{itemize}    
According to Proposition~\ref{conditions_odd_class}, 
this means that
the pair $(\eta, \d\eta)$ is a doublet of class $2n+1$. 

Then the characteristic distribution
$\K_{(\eta,\d\eta)}= \Ker \eta \cap \Ker \d\eta$ is zero, 
and so is the extended characteristic distribution.
By Proposition \ref{prop:directsum}
we have the direct sum decompositions
$
\Tan^* M = (\Ker \eta)^{\circ} \oplus (\Ker \d\eta)^{\circ} 
$
and
$
\Tan M = \Ker \eta \oplus \Ker \d \eta 
$.
Also, by Proposition~\ref{prop:kernel_B}, 
the characteristic tensor field defines an isomorphism
$\widehat B \colon \Tan M \to \Tan^*M$,
given by 
$\widehat B(X) = \inn{X} \d\eta + (\inn{X} \eta )\eta$.  
Since the class is odd, there exists a Reeb vector field $R$, which is uniquely defined as $R = \widehat B^{-1}(\eta)$. 
Corollary~\ref{3-form_injective} implies that 
the extended 3-form $\eta \wedge \d \eta$ 
is an almost-multisymplectic form of degree 3 (as long as $\dim M \geq 3$).

\subsection{Cosymplectic and precosymplectic structures}

    Let $M$ be a manifold of dimension $2n+1$.
    A \dfn{cosymplectic structure} on~$M$ is a pair 
    $(\tau, \omega)$, where $\tau \in \Omega^1(M)$ and $\omega \in \Omega^2(M)$ satisfy that
    \begin{itemize}
        \item both forms are closed, namely $\d \tau = 0$ and $\d\omega = 0$, and
        \item $\tau \wedge \omega^n$ is a volume form on $M$, i.e.\ it is nowhere-vanishing.
    \end{itemize}

The second condition is equivalent to the pair $(\tau, \omega)$ being of class $2n+1$.
As before, we have direct sum decompositions 
$
\Tan^* M = (\Ker \tau)^{\circ} \oplus (\Ker \omega)^\circ
$
and
$\Tan M = \Ker \tau \oplus \Ker \omega$.
Again, the map $\widehat B(X) = \inn{X} \omega +(\inn{X}\tau) \tau $ is an isomorphism, and there exists a uniquely determined Reeb vector field $R = \widehat B^{-1}(\tau)$. 
As both $\tau$ and $\omega $ are closed, the 3-form $\tau \wedge \omega \in \Omega^3(M)$ is also closed, and hence it is a multisymplectic form (as long as $\dim M \geq 3$), by Corollary~\ref{3-form_injective}.

It is important to note that, while most of 
these properties are shared with contact structures, the two structures are distinct. Their differences are reflected for instance in their local expressions; specifically, they possess different Darboux normal forms. In the contact case, the existence of Darboux coordinates is a consequence of the relation between the 1-form and its exterior derivative, while for cosymplectic structures, it follows from the closedness conditions.

A more general geometric framework can be obtained by relaxing the second condition.
A \dfn{precosymplectic structure} on $M$ is a pair $(\tau,\omega)$ of a 1-form and a 2-form such that
\begin{itemize}
    \item 
    both forms are closed, namely $\d \tau = 0$ and $\d \omega = 0$, and
    \item 
    the class of $(\tau,\omega)$ is constant, and $\tau$ is nowhere-vanishing.
\end{itemize}

A direct consequence of the closedness of these forms is the involutivity of their associated distributions: the kernel of $\omega$, the characteristic distribution, and the extended characteristic distribution. 

In the literature, it is usual to impose the class of the precosymplectic structure to be odd 
in order to have certain Darboux coordinates or 
existence of Reeb vector fields, among other reasons 
\cite{CLM_1994, GLRR_2024}.
This condition can be expressed, for instance, by asking 
$\K_{(\tau,\omega)}$ to be strictly contained in $\Ker \omega$, by Theorem~\ref{thm:class-parity}. 
If $\omega$ has rank $2r$, then this is equivalent to
$$\omega^{r+1}=0 \,, \qquad \tau \wedge \omega^r \neq 0\,.$$
If the class is odd there are Reeb vector fields
(Theorem \ref{thm:class-parity}).
Also, in this case we have $\mathcal{X}_{(\tau,\omega)} = \K_{(\tau,\omega)}$.

When  
$ \Ker \omega \subseteq \Ker \tau$ the class is even. 
If $\rk \omega = 2r$, this is equivalent to
$$
\omega^r\neq 0\,, \qquad 
\tau\wedge \omega^r =0\,,
$$
by Proposition~\ref{prop:even_conditions}.
In this case there do not exist Reeb vector fields,
but Liouville vector fields
(Theorem \ref{thm:class-parity}).

\begin{example}
    Let $M= \R^4$ with coordinates $(x,y,z,t)$ and consider the following differential forms
    $$
    \tau = \d t\,,  \qquad 
    \omega_1 = \d x \wedge \d y + y \,\d y \wedge \d z \,,\qquad  
    \omega_2 = \d x \wedge \d t \,. 
    $$
    All forms are closed and nowhere-vanishing, and we have $\omega_1^2 = \omega _2^2 = 0 $, so both 2-forms have rank 2. As
    $$
    \tau \wedge \omega_1 \neq 0\,, \qquad 
    \tau\wedge \omega_2 =0\,,
    $$
    both pairs $(\tau,\omega_1)$ and $(\tau,\omega_2)$ are precosymplectic structures of, respectively, class 3 and 2. 
    The Reeb vector fields of the first pair are 
    $$R = \fracp{}{t} + \Gamma_1\,,$$
    with 
    $\Gamma_1 \in \K_{(\tau,\omega_1)} = \left\langle  y\dparder{}{x}+\dparder{}{z}\right\rangle$.
    The Liouville vector fields of the second pair are
    $$\Delta  = \fracp{}{x } + \Gamma_2\,,$$
    with $\Gamma_2 \in \K_{(\tau,\omega_2)}= \left\langle \dparder{}{y},\dparder{}{z}\right\rangle$
\end{example}


\subsection{Symplectic and presymplectic structures}

A \dfn{symplectic form} on a manifold $M$ is a 2-form $\omega\in \Omega^2(M)$ such that
\begin{itemize}
\item 
it is closed:
$\d \omega = 0$, and
\item 
it is non-degenerate.
\end{itemize}
The non-degeneracy is equivalent to $\Ker \omega$ being zero, 
and hence to the rank of $\omega$ being equal to the dimension of the manifold. 
It is also equivalent to 
$\omega^n$ being a volume form.
Any symplectic manifold has even dimension.

A symplectic form can be understood as a doublet $(0,\omega)$ 
(where $0$ denotes the identically vanishing 1-form). 
The class of this doublet is obviously $2n$,
which is equivalent by Proposition~\ref{prop:even_conditions} to 
being $\omega^n\neq 0$ a volume form.  
In this case, 
the morphism induced by the characteristic tensor is the isomorphism 
$\widehat B = \widehat \omega$. 
The unique Liouville vector field of this doublet is the zero vector field.

If $\omega$ is exact, that is, if there exists a 1-form $\theta$ such that $\omega = \d \theta$, 
then we can also consider the doublet $(\theta, \omega)$. 
The class of this doublet is still $2n$, because the characteristic distribution is trivial. 
In this case we have 
$\Tan^* M =(\Ker \omega)^{\circ} \supset (\Ker \theta)^{\circ}$.
By Proposition~\ref{prop:kernel_B}, 
the map $\widehat B(X) = \inn{X} \omega + (\inn{X} \theta)\theta$ 
is an isomorphism, which gives $\Delta=\widehat B^{-1}(\theta)$. 
Note that we also have $\widehat \omega^{-1} (\theta) = \Delta$. 
One can check that $\widehat B$ is an isomorphism by rewriting it as
$$\widehat B(X) = \inn{X}\omega +(\inn{X} \theta)\inn{\Delta}\omega = \inn{X + (\inn{X} \theta)\Delta} \omega\,,$$
and then checking that $X+(\inn{X}\theta)\Delta = 0$ if, and only if, $X=0$. By Proposition~\ref{prop:extended_char}, the extended characteristic distribution $\mathcal{X}_{(\theta,\omega)}$ is spanned by the Liouville vector field $\Delta \in \X(M)$. 
Recall that, by Proposition~\ref{kernel_3-form_extended_characteristic}, if $\dim M \geq 3$ and $\theta$ is nowhere-vanishing, then $\mathcal{X}_{(\theta,\omega)} = \Ker (\theta\wedge \omega)$.

When the non-degeneracy condition is relaxed, we obtain more general geometric structures.
A \dfn{presymplectic structure} on a manifold $M$ is a $2$-form $\omega$ such that
\begin{itemize}
    \item $\omega$ is closed, and
    \item the rank of $\omega$ is constant.
\end{itemize}
In this case, we can still consider the doublet $(0,\omega)$, 
whose class is the rank of $\omega$, and hence an even number. 
The Liouville vector fields of this pair are the sections of $\Ker \omega$.

In the case that $\omega$ is exact, with $\omega = \d \theta$, 
then the pair $(\theta,\omega)$ does not necessarily have constant class. 
Even when the class is constant, it might be even or odd.

\subsection{Hamiltonian systems}

A \dfn{Hamiltonian system} is a triple $(M,\omega,H)$, where $M$ is a manifold, $\omega\in \Omega^2(M)$ is a symplectic form and $H$ is a function.
Hamiltonian systems are of utmost importance in 
analytic mechanics, symplectic geometry and dynamical systems theory.
Nevertheless, in some applications one is lead to consider the case where
$\omega$ is only a presymplectic form,
so we have a \dfn{presymplectic Hamiltonian system}. 


In this context, it is usual to search for the Hamiltonian vector fields, that is, the vector fields satisfying
$$
\inn{X}\omega = \d H\,.
$$
Within the framework of the previous sections, 
the Hamiltonian vector fields are just 
the Liouville vector fields of the pair $(\d H,\omega)$. 
If $\omega$ is symplectic, the Hamiltonian vector field is the unique Liouville vector field of the pair. 

When $\omega$ is degenerate, Hamiltonian vector fields 
may not exist or be unique.
This problem is of great importance in mathematical physics,
and so procedures to deal with it have been developed,
starting with the Dirac--Bergmann theory of constraints.
Its geometric aspects have been studied by many authors 
(see, for instance,~\cite{Car_1990,GNH_1978}).

Let us further analyse this problem.
The first step to obtain a Hamiltonian vector field 
is to try to solve the linear equation
$$
\inn{X_x} \omega_x = \d_x H
\,,
$$
which has a solution if and only if
$$ 
\d_x H \in \Im \widehat\omega_x\,.
$$
According to Theorem \ref{thm:class-parity}, 
this is equivalent to saying that 
the pair $(\d H, \omega)$ has even class at~$x$.
Therefore we arrive at the following interpretation
in the language of constrained systems:
\emph{the primary constraint submanifold
is the set of points where the class of $(\d H, \omega)$ is even.}
This could be the starting point of a constraint algorithm,
but we will not delve into this.

\subsection{Locally conformally symplectic manifolds}

Sometimes one refers as an
\dfn{almost symplectic form} on $M$ 
a 2-form $\omega\in \Omega^2(M)$ 
that is just
\begin{itemize}
\item 
nondegenerate.
\end{itemize}
This means that the closedness condition to be symplectic 
has been dropped. 
Nevertheless it is more interesting the case where symplecticity can be achieved just by multiplying by a function~%
\cite{CG_2025}.
A $2$-form $\omega$ is said to be \dfn{conformally symplectic} 
if there exists a function $f$ such that 
$\mathrm{e}^f \omega$
is a symplectic form.
In other terms:
\begin{itemize}
\item 
$\d (e^f \omega) = 0$ and
\item 
$\omega$ is nondegenerate.
\end{itemize}
A 2-form is \dfn{locally conformally symplectic}
if this conformal factor can be found locally around each point.




A $2$-form $\omega$ is locally conformally symplectic if, and only if, there exists a \emph{closed} $1$-form $\theta$ 
(the so-called \dfn{Lee form}) 
such that
$$
\d \omega = - \theta \wedge \omega\,.
$$
The 2-form is globally conformally symplectic if moreover $\theta$ is exact.

Since $\omega^n\neq 0$ 
the pair $(\theta, \omega)$ is a doublet of class $2n$. 
This implies that there exists a unique Liouville vector field 
$\Delta$ such that 
$\inn{\Delta} \omega  = \theta$.
Again, in this case, both the map 
$\widehat B(X) = \inn{X} \omega + (\inn{X} \theta) \theta$ and 
$\widehat\omega(X) = \inn{X} \omega $ 
are isomorphisms, and 
$\Delta = \widehat\omega^{-1} (\theta)= \widehat B^{-1}(\theta)$.
Finally notice that $\d\omega$ is minus the extended 3-form of the doublet.


\section{Precontact forms}
\label{sec_precontact}



In this section, 
we systematically investigate the notion of precontact form.
A 1-form $\eta$ on a manifold defines a pair 
$(\eta,\d\eta)$,
to which the results presented in Sections~\ref{ch_2} and~\ref{ch_3}
can be applied.
In particular,
the characteristic distribution of the pair
$\K = \K_{(\eta, \d\eta)} =
\Ker \eta \cap \Ker \d\eta$
is the characteristic distribution of~$\eta$,
and its class 
$\cl(\eta, \d \eta)$
is the class of~$\eta$,
as we have noted in Remark~\ref{rmrk:class}.


\subsection{Precontact forms}
\label{precontact_structures}

\begin{definition}
A 1-form $\eta$ on a manifold is a \dfn{precontact form} if
\begin{itemize}
    \item 
    $\eta$ is nowhere-vanishing, and
    \item 
    the pair $(\eta, \d \eta)$ has constant class.
\end{itemize}
A \dfn{precontact manifold} is a pair $(M,\eta)$,
where $M$ is a manifold and $\eta$ is a precontact form on~$M$.
\end{definition}

Next, we apply the results from Sections~\ref{ch_2} and~\ref{ch_3} 
to obtain different characterizations of the notion of precontact form. In first place, rephrasing Corollary~\ref{cor:prepair_rank} we obtain the following.

\begin{proposition}
Let $\eta$ be a 1-form.
\begin{enumerate}[$(i)$]
\item 
$\eta$ is a precontact form of class $2r+1$ if, and only if, 
$$ 
            \eta\wedge(\d\eta)^{r} \neq 0\,,
            \qquad
            (\d\eta)^{r+1} = 0\,.
$$
\item 
$\eta$ is a precontact form of class $2r+2$ if, and only if,
$$  
            \eta \neq 0\,, \qquad \eta\wedge(\d\eta)^{r+1} = 0\,,
            \qquad
            (\d\eta)^{r+1} \neq 0\,.
    $$
\end{enumerate}
\end{proposition}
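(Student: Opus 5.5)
The plan is to specialize the general results of Section~\ref{ch_2} to the pair $(\tau,\omega)=(\eta,\d\eta)$. Recall that $\eta$ is a precontact form exactly when $\eta$ is nowhere-vanishing and $(\eta,\d\eta)$ is a doublet. Both items then follow from Propositions~\ref{conditions_odd_class} and~\ref{prop:even_conditions}, taking care of the non-vanishing hypothesis.

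For item~$(i)$, Proposition~\ref{conditions_odd_class} says that $(\eta,\d\eta)$ is a doublet of class $2r+1$ if and only if $(\d\eta)^{r+1}=0$ and $\eta\wedge(\d\eta)^r\neq0$ at every point. It remains to check that $\eta$ is automatically nowhere-vanishing in this case. This is clear because $\eta\wedge(\d\eta)^r\neq0$ forces $\eta_x\neq0$ at each $x$; alternatively, it is Proposition~\ref{prop:doublet_conditions}$(b)$. Conversely, a precontact form of class $2r+1$ is in particular a doublet of that class, so the two conditions hold.

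For item~$(ii)$, I would apply the second half of Proposition~\ref{prop:even_conditions} with $r+1$ in place of $r$. Since $\eta$ is assumed nowhere-vanishing, $(\eta,\d\eta)$ is a doublet of class $2(r+1)$ if and only if $(\d\eta)^{r+1}\neq0$ and $\eta\wedge(\d\eta)^{r+1}=0$. Combined with $\eta\neq0$ in the definition of precontact form, this gives exactly the stated equivalence.

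The only delicate point is the implicit use of Lepage's theorem inside Lemma~\ref{lem:lepage}. That result shows that $(\d\eta)^{r+2}=0$ need not be assumed separately, and it relies on $\eta_x\neq0$. This is why $\eta\neq0$ must appear explicitly in~$(ii)$ but not in~$(i)$: in the odd case non-vanishing is encoded in $\eta\wedge(\d\eta)^r\neq0$. As a consistency check, I would also note that the statement matches Corollary~\ref{cor:prepair_rank}, whose last clause distinguishes the two parities by whether $(\d\eta)^{r+1}$ vanishes.
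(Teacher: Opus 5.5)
Your proof is correct and essentially the paper's. The paper simply presents the statement as a rephrasing of Corollary~\ref{cor:prepair_rank}, which is itself obtained by combining Propositions~\ref{conditions_odd_class} and~\ref{prop:even_conditions}, the same two results you invoke. Your direct use of the nowhere-vanishing clause of Proposition~\ref{prop:even_conditions} (with $r+1$ in place of $r$) for item~$(ii)$ is, if anything, the cleaner way to carry out that rephrasing.
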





The definition presented here is slightly different from other definitions that can be found in the literature.
On one hand, it is more general than that of 
\cite{LGGMR_23,LL_19a,Lai_22}
as therein only precontact forms of odd class are considered.
On the other hand, 
in~\cite{GG_23},
precontact distributions are studied rather than precontact forms,
which nevertheless appear as 
nowhere-vanishing 1-forms satisfying
$$
\eta \wedge (\d \eta)^r \neq 0\,, \qquad \eta \wedge (\d \eta )^{r+1} = 0\,.
$$
Notice, however,
that this definition of precontact form 
is not enough 
to ensure that the rank of $\d\eta$ is constant, 
since the parity of the class could change at different points,
as it is showed in the next example.

\begin{example}
Consider the manifold $M=\mathbb{R}^3$ with coordinates $(x,y,s)$, endowed with the nowhere-vanishing $1$-form
$$
\eta = \d s - s x \,\d x 
.
$$
Straightforward computations give 
$$
\d\eta = x \,\d x \wedge \d s \,,
\qquad
\eta \wedge \d \eta = 0\,.
$$
Therefore:
\begin{itemize}
\item
Where $x=0$: $\d\eta$ has rank~0 and the class of $\eta$ is~1.
\item
Where $x\neq 0$: $\d\eta$ has rank~2 and the class of $\eta$ is~2.
\end{itemize}

\end{example}

There exists Darboux theorems characterizing the local expressions of precontact forms, we refer the reader to \cite{god_1969,LM_1987} for a proof.
\begin{theorem}[Darboux theorem, odd class]\,

\noindent Let $\eta \in \Omega^1(M)$ be a nowhere-vanishing $1$-form of constant class~$2r+1$. 
    Then, for all $x\in M$, there exist local coordinates 
    $q^1,\dotsc,q^r,p_1,\dotsc,p_r,s,u_1,\dotsc,u_z$ 
    (with $2r+z+1=m$) 
    such that
    \begin{equation}
    \label{pc_darboux_odd}
        \eta = \d s - \sum ^{r}_{i=1} p_i \,\d q^i \,.
    \end{equation}
\end{theorem}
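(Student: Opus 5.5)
The plan is to reduce the statement to the Darboux theorem for presymplectic forms together with a Frobenius-type straightening, via the characteristic distribution $\K=\Ker\eta\cap\Ker\d\eta$.

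\textbf{Step 1: $\K$ is integrable.} Since the class is constant, $\K$ is a regular distribution of rank $m-2r-1$. For $X,Y\in\K$ we have $\inn{[X,Y]}\eta=X(\inn{Y}\eta)-Y(\inn{X}\eta)-\d\eta(X,Y)=0$. Moreover, $\Lie_X\eta=\inn{X}\d\eta+\d\inn{X}\eta=0$, so $\Lie_X\d\eta=0$ as well. Therefore $\inn{[X,Y]}\d\eta=\Lie_X\inn{Y}\d\eta-\inn{Y}\Lie_X\d\eta=0$. By Frobenius, we get local coordinates $(y^1,\dotsc,y^{2r+1},u_1,\dotsc,u_z)$ with $\K=\langle\partial/\partial u_a\rangle$.

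\textbf{Step 2: $\eta$ is basic.} Since $\inn{\partial_{u_a}}\eta=0$ and $\Lie_{\partial_{u_a}}\eta=0$, the form $\eta$ involves neither $\d u_a$ nor any dependence on $u_a$. Hence $\eta$ is the pullback of a 1-form $\bar\eta$ on a $(2r+1)$-dimensional local quotient $N$ with coordinates $y$. The form $\bar\eta$ has class $2r+1=\dim N$, i.e.\ $\bar\eta\wedge(\d\bar\eta)^r\neq0$ (Proposition~\ref{conditions_odd_class}). So $\bar\eta$ is a contact form on $N$, and it suffices to prove the classical Darboux theorem for contact forms.

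\textbf{Step 3: contact Darboux (main obstacle).} Here I would use an induction of Pfaff type rather than Moser's method. Let $R$ be the Reeb field of $\bar\eta$ (Theorem~\ref{thm:class-parity}) and choose $s_0$ with $R(s_0)=1$ locally (flow-box coordinates). Then $\beta=\bar\eta-\d s_0$ satisfies $\inn{R}\beta=0$ and $\Lie_R\beta=\Lie_R\bar\eta=0$, so $\beta$ descends to a $2r$-dimensional transversal $W$. On $W$, $\d\beta=\d\bar\eta$ descends to a symplectic form (its rank is $2r$ with kernel exactly $\langle R\rangle$). The symplectic Darboux theorem together with the Poincaré lemma gives $\beta=-p_i\,\d q^i+\d g$ with $g$ a function on $W$. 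Setting $s=s_0+g$ then yields $\bar\eta=\d s-p_i\,\d q^i$.

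The delicate points are checking that $(q,p,s)$ are independent coordinates (this follows since $\d s(R)=1$ while $q,p$ are $R$-invariant) and invoking the symplectic Darboux theorem, which I would take as known. Pulling back to $M$ and adjoining the $u_a$ then gives the stated expression.
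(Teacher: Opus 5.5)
Your proof is correct. The involutivity computation for $\K$, the reduction to a contact form $\bar\eta$ on the $(2r+1)$-dimensional local quotient, the construction of $\beta=\bar\eta-\d s_0$ (which is indeed $R$-basic, with $\d\beta$ descending to a symplectic form), and the independence check via $\d s(R)=1$ all go through.

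The paper gives no proof of this statement; it only refers to Godbillon and Libermann--Marle. The natural comparison is therefore with the standard short argument, which bypasses both of your reductions:
\begin{itemize}
\item Since the class is $2r+1$, $\d\eta$ is a closed $2$-form of constant rank $2r$ (Proposition~\ref{prop:doublet_conditions}).
\item The Darboux theorem for closed forms of constant rank then gives local functions with $\d\eta=\d q^i\wedge\d p_i$.
\item Hence $\eta+p_i\,\d q^i$ is closed, so locally it equals some $\d s$.
\item Finally, $\eta\wedge(\d\eta)^r=r!\,\d s\wedge\d q^1\wedge\d p_1\wedge\dotsb\wedge\d q^r\wedge\d p_r\neq0$ shows that $s,q^i,p_i$ are independent, so they can be completed by functions $u_a$ to a chart.
\end{itemize}

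That route needs neither the integrability of $\K$ nor a Reeb field. It does, however, invoke the constant-rank (presymplectic) Darboux theorem. One standard proof of that theorem is precisely a quotient by the involutive distribution $\Ker\d\eta=\K\oplus\langle R\rangle$ (Proposition~\ref{prop:ker_omega}), followed by the nondegenerate Darboux theorem.

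Your argument performs that quotient by hand in two stages: first by $\K$, along which $\eta$ itself is basic, then by $\langle R\rangle$, along which only $\bar\eta-\d s_0$ is basic. As a result it relies only on the symplectic Darboux theorem. It also makes visible that the $u_a$ are Frobenius coordinates for $\K$, i.e.\ $\K=\langle\partial/\partial u_a\rangle$, as the paper states right after the theorem. In the short route this fact only follows a posteriori from a dimension count.
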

    
In Darboux coordinates, 
the characteristic distribution and
the Reeb vector fields read
$$
\K = \bigg\langle \bigg\{ \fracp{}{u_a} \bigg\}_{a=1,\dotsc,z} \bigg\rangle\qquad\text{and}\qquad R = \fracp{}{s} +\Gamma\,,\quad \text{with }\ \Gamma \in \K\,.
$$

\begin{theorem}[Darboux theorem, even class]\,

\noindent Let $\eta \in \Omega^1(M)$ be a nowhere-vanishing $1$-form of constant class~$2r+2$. 
    Then, for all $x\in M$, there exists local coordinates $q^1,\dotsc,q^{r+1},p_1,\dotsc,p_{r+1},u_1,\dotsc,u_{
    z}$ (with $2r+2+z=m$) such that $p_1$ is nowhere-vanishing and
    $$
    \eta =\sum ^{r+1}_{i=1}p_i \d q^i \,.
    $$
\end{theorem}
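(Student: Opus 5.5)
The plan is to reduce the even-class case to the odd-class Darboux theorem by dividing $\eta$ by a suitable nowhere-vanishing function, after first quotienting out the characteristic distribution.

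\textbf{Step 1: reduction to $\K=0$.} Let $\K=\Ker\eta\cap\Ker\d\eta$; it has constant rank $m-2r-2=z$. For $X,Y\in\K$ we have $\Lie_X\eta=\inn{X}\d\eta+\d\inn{X}\eta=0$ and $\Lie_X\d\eta=\d\Lie_X\eta=0$. Hence
$$\inn{[X,Y]}\eta=\Lie_X\inn{Y}\eta-\inn{Y}\Lie_X\eta=0,$$
and similarly $\inn{[X,Y]}\d\eta=0$, so $\K$ is involutive. By Frobenius, around each point there are coordinates $(y^1,\dots,y^{2r+2},u_1,\dots,u_z)$ with $\K=\langle\partial/\partial u_a\rangle$. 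Since $\inn{X}\eta=0$ and $\Lie_X\eta=0$ for $X\in\K$, the form $\eta$ is basic. It therefore descends to a $1$-form $\bar\eta$ on a local leaf space $N$ of dimension $2r+2$. On $N$, $\bar\eta$ is nowhere vanishing and has class $2r+2=\dim N$. It thus suffices to prove the statement on $N$ with $z=0$, and then pull the coordinates back and adjoin the $u_a$.

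\textbf{Step 2: choice of conformal factor.} On $N$ the class equals the dimension and is even, so by Corollary~\ref{cor:B_iso} and Theorem~\ref{thm:class-parity}, $\d\bar\eta$ is nondegenerate. There is a unique Liouville field $\Delta$ with $\inn{\Delta}\d\bar\eta=\bar\eta$, and $\Delta$ is nowhere zero because $\bar\eta\neq0$. For a nowhere-vanishing $f$ we compute
$$\bigl(\d(\bar\eta/f)\bigr)^{r+1}=f^{-(r+1)}\Bigl((\d\bar\eta)^{r+1}-\tfrac{r+1}{f}\,\d f\wedge\bar\eta\wedge(\d\bar\eta)^r\Bigr).$$
Using $(r+1)\,\bar\eta\wedge(\d\bar\eta)^r=\inn{\Delta}(\d\bar\eta)^{r+1}$ and $\d f\wedge(\d\bar\eta)^{r+1}=0$ (top degree), the bracket reduces to $(1-\Delta f/f)(\d\bar\eta)^{r+1}$. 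It therefore vanishes exactly when $\Delta f=f$. Straighten $\Delta=\partial/\partial t$ in a flow box and take $f=e^t>0$. Then
$$\inn{\Delta}(\bar\eta/f)=0,\qquad \inn{\Delta}\d(\bar\eta/f)=\bar\eta/f-(\Delta f)\bar\eta/f^2=0,$$
so $\Delta$ lies in the characteristic distribution of $\bar\eta/f$. Moreover $\bar\eta/f\wedge(\d(\bar\eta/f))^r=f^{-r-1}\,\bar\eta\wedge(\d\bar\eta)^r$, and this is nonzero because $\d\bar\eta$ has rank $2r+2$ and $\Delta\notin\Ker\d\bar\eta$ (the same argument as in the Remark after Lemma~\ref{lem:lepage}). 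By Proposition~\ref{conditions_odd_class}, $\bar\eta/f$ then has constant class $2r+1$, and its characteristic distribution is exactly $\langle\Delta\rangle$.

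\textbf{Step 3: applying odd Darboux.} The odd-class Darboux theorem gives coordinates $(q^2,\dots,q^{r+1},p'_2,\dots,p'_{r+1},s,u)$ with $\bar\eta/f=\d s-\sum_{i\ge2}p'_i\,\d q^i$ and $\K=\langle\partial/\partial u\rangle$. In particular the functions $s,q^i,p'_i$ are $\Delta$-invariant. Since $\Delta f=f\neq0$, the function $f$ is independent of them and can replace $u$ as a coordinate. Set $q^1=s$, $p_1=f$ and $p_i=-fp'_i$ for $i\ge2$. The change $(f,p'_i)\mapsto(p_1,p_i)$ is invertible because $f\neq0$, so these are coordinates, and
$$\bar\eta=\sum_{i=1}^{r+1}p_i\,\d q^i$$
with $p_1$ nowhere vanishing.

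The main obstacle is Step 2: identifying the homogeneity condition $\Delta f=f$ and checking that dividing by $f$ yields constant class exactly $2r+1$, with characteristic line $\langle\Delta\rangle$, so that $f$ survives as an independent coordinate. The identity $\inn{\Delta}(\d\bar\eta)^{r+1}=(r+1)\,\bar\eta\wedge(\d\bar\eta)^r$ is the key computation there.
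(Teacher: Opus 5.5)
Your proof is correct. The paper itself gives no proof of this theorem; it defers to \cite{god_1969,LM_1987} for both Darboux theorems. Your argument is, in effect, the proof one can assemble from the paper's Section~\ref{sec_changes}, with the final step added:
Step~1 repeats the involutivity computation of Proposition~\ref{prop:preco_involutivity}.
Step~2 is Corollary~\ref{existence_of_f}; your factor $1/f=e^{-t}$ is $e^{h}$ with $\Lie_\Delta h=-1$, which is exactly the criterion of Proposition~\ref{change_of_class_parity}.
Step~3, which turns the conformal factor back into the momentum $p_1$, does not appear in the paper.

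The one genuine difference is where you solve for the conformal factor. The paper stays on $M$ and needs the projectability hypothesis $[\Delta,\Gamma]\in\Ker\omega$ of Proposition~\ref{sufficient_cond_function} to reduce $\Lie_\Delta f=-1$ to a PDE in the transverse variables. You pass first to the leaf space $N$, which makes $\d\bar\eta$ nondegenerate and the Liouville field unique and nowhere vanishing, so $\Delta f=f$ is solved at once in a flow box.

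Your route buys three things:
\begin{itemize}
\item a self-contained reduction of the even case to the odd one;
\item an explanation of why $p_1$ is nowhere vanishing, since it is the conformal factor itself;
\item a local converse to the paper's presymplectization proposition. In the chart $(s,q^i,p'_i,f)$ you have $\Delta=f\,\partial/\partial f$, spanning the characteristic distribution of $\bar\eta/f$. So you have shown that every nowhere-vanishing form of class $2r+2$ is locally the presymplectization $z\,\alpha$ of a form $\alpha$ of class $2r+1$ on the quotient, with $z=f$.
\end{itemize}
The price is reliance on the odd-class Darboux theorem, which the paper also only cites.
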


In Darboux coordinates, the characteristic distribution and Liouville vector fields of $\eta$ read
$$ 
\K = \bigg\langle \bigg\{ \fracp{}{u_a} \bigg\}_{a=1,\dotsc,z} \bigg\rangle\qquad\text{and}\qquad \Delta = p_i\parder{}{p_i} + \Gamma\,,\quad  \text{with }\ \Gamma\in\K\,.
$$

Next we study the extended characteristic distribution.

\begin{proposition}\label{prop:preco_involutivity}
    If $(M,\eta)$ is a precontact manifold, then the kernel of $\d \eta$, the characteristic distribution, and the extended characteristic distribution, are involutive.
\end{proposition}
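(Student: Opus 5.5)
The plan is to verify, for each of the three distributions, that it has constant rank and that the Lie bracket of two of its sections stays in it. I will use the Cartan identity
\[
\inn{[X,Y]}\alpha = \Lie_X\inn{Y}\alpha - \inn{Y}\Lie_X\alpha ,
\qquad
\Lie_X = \d\inn{X} + \inn{X}\d .
\]
Constant rank comes from the earlier results. By Proposition~\ref{prop:doublet_conditions}, $\d\eta$ has constant rank, so $\Ker\d\eta$ is a regular distribution. The class is constant, so $\K$ is regular. By Corollary~\ref{cor:extended_corollary}, and since $\eta$ is nowhere-vanishing, $\mathcal{X}=\mathcal{X}_{(\eta,\d\eta)}$ is regular as well.

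\emph{Kernel of $\d\eta$.} Let $X,Y\in\Ker\d\eta$. Since $\d\d\eta=0$, we get $\Lie_X\d\eta=\d\inn{X}\d\eta=0$. Then
\[
\inn{[X,Y]}\d\eta=\Lie_X\inn{Y}\d\eta-\inn{Y}\Lie_X\d\eta=0 .
\]

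\emph{Characteristic distribution.} For $X,Y\in\K$, the bracket lies in $\Ker\d\eta$ by the previous step. For the $\eta$ part, the identity $\d\eta(X,Y)=X(\eta(Y))-Y(\eta(X))-\eta([X,Y])$ gives $\eta([X,Y])=-\d\eta(X,Y)=0$. Hence $[X,Y]\in\K$.

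\emph{Extended characteristic distribution.} Take $X,Y\in\mathcal{X}$, so $\inn{X}\eta=\inn{Y}\eta=0$, $\inn{X}\d\eta=f\eta$ and $\inn{Y}\d\eta=g\eta$ for some functions $f,g$. First, $\eta([X,Y])=-\d\eta(X,Y)=-\inn{Y}(f\eta)=0$. Next, $\Lie_X\eta=\inn{X}\d\eta=f\eta$, and therefore
\[
\Lie_X\d\eta=\d(f\eta)=\d f\wedge\eta+f\,\d\eta .
\]
Plugging this into the Cartan identity:
\[
\inn{[X,Y]}\d\eta
=\Lie_X(g\eta)-\inn{Y}(\d f\wedge\eta+f\d\eta)
=X(g)\eta+gf\eta-Y(f)\eta-fg\eta .
\]
The cross terms cancel, leaving $(X(g)-Y(f))\eta$, which is a multiple of $\eta$. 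So $[X,Y]\in\mathcal{X}$.

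The only step needing care is the last one: there one has to track the terms involving $\d f\wedge\eta$ and check that everything in $\inn{[X,Y]}\d\eta$ is proportional to $\eta$. The one technical point is that $f$ and $g$ must be smooth. This holds because $\eta$ is nowhere-vanishing: locally $f$ can be recovered as $\inn{Z}\inn{X}\d\eta$ for any local vector field $Z$ with $\inn{Z}\eta=1$. Alternatively, one could argue via Proposition~\ref{kernel_3-form_extended_characteristic} using the closed $3$-form $\eta\wedge\d\eta$, but the direct computation is simpler and also covers the low-class cases.
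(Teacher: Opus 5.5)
Your proof is correct. For $\Ker\d\eta$ and $\K$ it follows the paper's argument: Cartan's identity with $\d\d\eta=0$, and $\eta([X,Y])=-\d\eta(X,Y)=0$. For the extended characteristic distribution, however, you take a different route.

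The paper splits by parity. In odd class it uses $\mathcal{X}=\K$ (Proposition~\ref{prop:extended_char}). In even class it writes $\mathcal{X}=\K\oplus\langle\Delta_0\rangle$ with a Liouville field $\Delta_0$. It then only has to check $\inn{[\Delta_0,\Gamma]}\d\eta=-\inn{\Gamma}\d\eta=0$ for $\Gamma\in\K$, and conclude using $\Ker\d\eta\subset\Ker\eta$.

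You instead work directly from the definition with arbitrary sections $X,Y$ of $\mathcal{X}$, and obtain $\eta([X,Y])=0$ and $\inn{[X,Y]}\d\eta=(X(g)-Y(f))\,\eta$. This needs neither the parity dichotomy nor the existence of Liouville fields (Theorem~\ref{thm:class-parity}, which relies on constant class and the splitting lemma). Closure under the bracket then holds for any nowhere-vanishing $\eta$; constant class is used only to make $\mathcal{X}$ a regular distribution. Your formula also contains the paper's byproduct: taking $X=\Delta$ Liouville ($f=1$) and $Y=\Gamma\in\K$ ($g=0$) gives $\inn{[\Delta,\Gamma]}\d\eta=0$. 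That is Remark~\ref{rem:Liou_involutivity}, which is what is later needed for Corollary~\ref{existence_of_f}.

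The paper's version is shorter given the structure results already established, and it states that Liouville bracket relation explicitly. Your argument for the smoothness of $f,g$, via a local $Z$ with $\inn{Z}\eta=1$, is needed and is correct.
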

\begin{proof}
The kernel of $\d \eta$ is involutive because $\d \eta$ is a closed form.
    
For the characteristic distribution we have that if $X,Y \in \K = \Ker \eta \cap \Ker \d \eta$ then 
$$\inn{[X,Y]}\eta = \Lie_X (\inn{Y} \eta) - \inn{Y} \Lie _X \eta = -\inn{Y}\inn{X}\d\eta =0\,, 
\qquad
\inn{[X,Y]}\d \eta = \Lie_X (\inn{Y} \d \eta) - \inn{Y} \Lie _X \d \eta =  0\,, $$
which proves that it is involutive.

Last, for the extended characteristic distribution, let us discuss the two possible cases. If the class is odd, then the extended characteristic distribution is equal to the characteristic distribution. If the class is even, the extended characteristic distribution is $\mathcal{X}= \K \oplus \langle \Delta_0\rangle$, where $\Delta_0$ is any Liouville vector field. Thus, we only need to check that $[\Delta_0 ,\Gamma] \in \mathcal{X} $ for any $\Gamma \in \K$. We have
$$\inn{[\Delta_0, \Gamma]} \d\eta = -\inn{\Gamma} \d\eta = 0\,,$$
and as $\Ker \d \eta \subset \Ker \eta$ in this case, we conclude that the distribution is involutive. 
\end{proof}

\begin{remark}\label{rem:Liou_involutivity}
In the preceding proof we have shown indeed that,
for a precontact manifold of even class, 
$[\Delta,\Gamma]\subset \K$ for any Liouville vector field $\Delta$ and $\Gamma\in \K$.
\end{remark}

If $(M,\eta)$ is a precontact manifold with class $\geq 3$, then $\Ker (\eta \wedge \d \eta)$ is also involutive, as $\Ker (\eta \wedge \d \eta) = \mathcal{X}_{(\eta,\d \eta)}$.

\begin{example}
Here we present an example of a precontact structure of even class 
coming from the study of singular Lagrangian functions within the framework of contact mechanics~\cite{GMRR_2025}.

Consider the manifold $M = \Tan^*(\R ^3) \times \R$ with coordinates $(x,y,z;p_x,p_y,p_z;s)$ and the $1$-form
$$\eta = \d s - p_x \d x + \gamma s\d y - p_z\d z \,,$$
where $\gamma \in \R$ is a non-zero constant. We have
$$\d \eta = \d x \wedge \d p_x + \gamma \d s \wedge \d y + \d z \wedge \d p_z\,,$$
and one can easily check that 
$$\Ker \d \eta = \left\langle \fracp{}{p_y}\right\rangle \subset \Ker \eta\,,$$
and so 
$\K = \Ker \eta \cap \Ker \d \eta = \big\langle\fracp{}{p_y} \big\rangle$. This implies that the class is 6. As the class is even, there exist Liouville vector fields. Indeed, they are given by
$$\Delta = -\frac{1}{\gamma} \fracp{}{y} + p_x \fracp{}{p_x} + f\fracp{}{p_y} + p_z \fracp{}{p_z} + s\fracp{}{s}\,,$$
where $f$ is any arbitrary function.

\end{example}

\subsection{Contact and precontact Hamiltonian systems}\label{precontact_Hamiltonian}

A \dfn{contact Hamiltonian system} is a triple $(M,\eta,H)$
where $(M,\eta)$ is a contact manifold and 
$H \in \Cinfty(M)$ is the \dfn{Hamiltonian function} of the system.
This yields a dynamics on~$M$, given by the
\dfn{contact Hamiltonian vector field} $X_H \in \X(M)$,
which is defined by any of the following equivalent conditions:
\begin{enumerate}[(1)]
\item 
$\inn{X_H}\d\eta = \d H - (\Lie_R H)\eta$ \quad and \quad $\inn{X_H}\eta = -H$,
\item $\Lie_{X_H}\eta = -(\Lie_R H)\eta$ \quad and \quad $ \inn{X_H}\eta = -H$,
\item $\widehat B(X_H) = \d H - (\Lie_R H + H)\eta$, 
\end{enumerate}
where $R \in \X(M)$ is the (unique) Reeb vector field defined by the contact form.

It is clear that these formulations are not well-suited to define Hamiltonian dynamics on precontact manifolds, as Reeb vector fields may not exist nor be unique.
In a recent paper~\cite{GMRR_2025}, we obtained two equivalent formulations of the contact Hamiltonian equations which can be written without Reeb vector fields:
a vector field $X$ on~$M$ is the contact Hamiltonian vector field for $H$ if, and only if,
\begin{equation}\label{cH_eq_1}
    (\inn{X} \d \eta) \wedge \eta = (\d H) \wedge \eta 
				\qquad \text{and} \qquad
				\inn{X} \eta = -H \,,
\end{equation}
or, equivalently,
\begin{equation}\label{cH_eq_2}
    \inn{X} (\eta \wedge \d \eta) = - H \d \eta + \d H \wedge \eta\,.
\end{equation} 
\begin{remark}
    Equations \eqref{cH_eq_1} and \eqref{cH_eq_2} are equivalent if $\eta \wedge \d \eta \neq 0$. In other words, they are equivalent if $\eta$ is nowhere-vanishing and the class of $\eta$ is $\geq 3$. Otherwise, we also need to add the condition $\inn{X} \eta = -H$ to equation~\eqref{cH_eq_2}.  
\end{remark}
    
We want to extend these notions to the case of precontact forms.

\begin{definition}
    Given a precontact form $\eta$, and a function $H\colon M \to \R$, we say that the triple $(M,\eta,H)$ is a \dfn{precontact Hamiltonian system}.
\end{definition}

Given a precontact Hamiltonian system $(M,\eta,H)$, 
a \dfn{precontact Hamiltonian vector field} 
is a vector field $X \in \X (M)$ that satisfies equations~\eqref{cH_eq_1}.


Note that, if $\eta$ has odd class, and thus there exist Reeb vector fields, 
contracting the first condition of equations~\eqref{cH_eq_1} by any Reeb vector field $R\in \X (M)$ one obtains
\begin{equation}
	\inn{X} \d \eta = \d H - (\Lie_R H)\eta \,,
\end{equation}
which is precisely the usual way to define the contact Hamiltonian vector field
(condition~(1) at the beginning of this section).
As the Reeb vector field is not necessarily unique in the precontact case, 
this also implies that for there to be a solution $X_x$ at a point $x\in M$ 
it is necessary that
\begin{equation}
    \Lie_R H(x) = \Lie_{R'} H(x)\,,
\end{equation}
for any two Reeb vector fields $R,R'$.
By Proposition \ref{reeb_structure}, this is equivalent to
\begin{equation} \label{odd_constraint}
    \Lie_\Gamma H (x)=0\,,
\end{equation}
for every $\Gamma \in \mathcal{K}=\Ker \eta \cap \Ker \d \eta$.

Similarly, if $\eta$ has even class,
contracting the first equation of~\eqref{cH_eq_1} by any Liouville vector field $\Delta \in \X(M)$, 
we obtain the necessary condition
\begin{equation}\label{even_constraints}
  \Lie_{\Delta} H (x) = H(x)\,.  
\end{equation}
Again, this implies that $\Lie_\Gamma H(x) = 0$ for any $\Gamma \in \mathcal{K}= \Ker \d \eta$.

Last, let us point out the following result, which motivates the next section of this article. 
Recall that the codimension 1 distribution spanned by kernel of $\eta$ is the same as 
that spanned by the kernel of $g\eta$, 
for any nowhere-vanishing function~$g\in\Cinfty(M)$.
Also, it is well-known that the contact Hamilton equations of $(M,\eta,H)$ 
are the same ones as those defined by the contact Hamiltonian system $(M,g\eta,g H)$, 
with $g\in\Cinfty(M)$ any nowhere-vanishing function
\cite{sandwich,inverse-contact}.
For precontact Hamiltonian systems we have the following result.
\begin{proposition}
    Let $(M,\eta, H)$ be a precontact Hamiltonian system. The equations~\eqref{cH_eq_1} defined by the precontact system $(M,g\eta,\widetilde H)$, with $g\in\Cinfty(M)$ nowhere-vanishing and $\widetilde H\in\Cinfty(M)$, are the same as those defined by $(M,\eta,H)$ if and only if $\widetilde H = g H$.   
\end{proposition}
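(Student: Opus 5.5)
The plan is to compute directly how both equations in \eqref{cH_eq_1} transform under $\eta \mapsto g\eta$, $H\mapsto \widetilde H$. I will read ``the same equations'' pointwise: at each $x\in M$, the conditions on a vector $X_x\in\T_xM$ defined by $(g\eta,\widetilde H)$ and by $(\eta,H)$ should be equivalent. (Comparing only global solution sets would be degenerate whenever there are no solutions.)

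The first step is to expand $\d(g\eta) = \d g\wedge\eta + g\,\d\eta$. Contracting with $X$ gives
$$
\inn{X}\d(g\eta) = (\inn{X}\d g)\,\eta - (\inn{X}\eta)\,\d g + g\,\inn{X}\d\eta\,.
$$
Wedging with $g\eta$ kills the first term. Since $g$ is nowhere-vanishing we may divide by $g$, so the first equation for $(M,g\eta,\widetilde H)$ becomes
$$
g\,(\inn{X}\d\eta)\wedge\eta - (\inn{X}\eta)\,\d g\wedge\eta = \d\widetilde H\wedge \eta\,.
$$
The second equation $\inn{X}(g\eta) = -\widetilde H$ is equivalent to $\inn{X}\eta = -\widetilde H/g$.

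For the ``if'' direction, take $\widetilde H = gH$. The second equations then coincide, both reading $\inn{X}\eta=-H$. Substituting this into the transformed first equation, and expanding $\d(gH)\wedge\eta = g\,\d H\wedge\eta + H\,\d g\wedge\eta$, the terms $H\,\d g\wedge\eta$ cancel on both sides. Dividing by $g$ leaves $(\inn{X}\d\eta)\wedge\eta = \d H\wedge\eta$. Hence the two systems are equivalent at every point.

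For the ``only if'' direction, use the second equations alone. At each $x$, the solutions of $\inn{X_x}\eta_x = -H(x)$ form an affine hyperplane of $\T_xM$, which is nonempty because $\eta$ is nowhere-vanishing. The same holds for $\inn{X_x}\eta_x = -\widetilde H(x)/g(x)$. Two such hyperplanes parallel to $\Ker\eta_x$ coincide only if $H(x) = \widetilde H(x)/g(x)$, so the systems can only agree if $\widetilde H = gH$.

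The main obstacle is making precise what ``the same equations'' means. The full systems of the two pairs might both have empty solution sets at some points, for example because of the constraints \eqref{odd_constraint} or \eqref{even_constraints}, and then comparing only solution sets gives no information. I would therefore state explicitly that the comparison is between the systems as conditions on $X_x$, i.e.\ equation by equation. This is also why the second equation, with its always nonempty affine solution set, carries the whole converse.
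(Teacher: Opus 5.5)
Your argument is correct and is essentially the paper's proof. For the \emph{if} direction you expand $\d(g\eta)=\d g\wedge\eta+g\,\d\eta$ and use $\inn{X}\eta=-H$ to cancel the $H\,\d g\wedge\eta$ terms. For the \emph{only if} direction you compare $\inn{X}\eta=-\widetilde H/g$ with $\inn{X}\eta=-H$.

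Your attention to what \emph{the same equations} means improves on the paper, whose converse tacitly assumes that a common solution $X$ exists. However, the reading that makes both of your directions work is this: the second equations cut out the same affine hyperplane, and on that hyperplane the first equations agree. The other two natural readings each break one direction:
\begin{itemize}
\item[(1)] \emph{Equation by equation.} The first equations by themselves are not in general equivalent when $\widetilde H=gH$; they differ by $(\inn{X}\eta+H)\,\d g\wedge\eta$. So this reading breaks the \emph{if} direction.
\item[(2)] \emph{Full solution sets.} Comparing only full solution sets breaks the \emph{only if} direction. Take $\eta=\d s-s\,\d q$ on $\R^2$, $g=1$, $H=1$, $\widetilde H=2$. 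Both systems have no solutions anywhere, since solvability requires $\Lie_\Delta H=H$ with $\Delta=\partial_q+s\,\partial_s$, yet $\widetilde H\neq gH$.
\end{itemize}
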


\begin{proof}
    First assume that $\widetilde H = gH$, with $g$ nowhere-vanishing. Then, the precontact Hamilton equations for $(M,g\eta, gH)$ are
\begin{equation}
		(\inn{X} \d (g\eta)) \wedge g\eta = \d (gH) \wedge (g\eta) \,,
		\qquad
		\inn{X}( g\eta )= -gH \,.
\end{equation}
The second equation, using that $g$ is nowhere-vanishing, implies that
 $\inn{X}\eta = -H$. It only remains to see that the first condition of~\eqref{cH_eq_1} is also satisfied. For this, note that
$$(\inn{X} \d (g\eta)) \wedge g\eta = (\d (gH)) \wedge (g\eta) \quad \Longrightarrow \quad ((\inn{X}\d g)  \eta -(\inn{X}\eta)\d g +g \inn{X}\d \eta )\wedge g\eta = (H \d g + g\d H)\wedge g \eta$$
and, using that $\inn{X}\eta = -H$, we obtain
$$(gH)\d g \wedge \eta + g^2(\inn{X}\d \eta\wedge \eta) = (gH)\d g \wedge \eta + g^2(\d H \wedge \eta)\,,$$
which, as $g$ is nowhere-vanishing, implies that
$\inn{X}(\d\eta) \wedge \eta = \d H \wedge \eta$,
as we wanted to see.

Conversely, assume that $(M,g\eta, \widetilde H)$ defines the same precontact Hamilton equations as $(M,\eta,H)$. Then, for $(M,g\eta,\widetilde H)$, we have
$\inn{X}(g\eta ) = -\widetilde H$.
Using that $g$ is nowhere-vanishing
$\displaystyle\inn{X}\eta = -\widetilde H/g\,.$
However, since both systems define the same contact Hamilton equations, 
we also have $\inn{X}\eta = -H$. 
Therefore, $\widetilde H = gH$, as we wanted to see.
\end{proof}

This proposition motivates us to study the effects of a conformal change of a precontact form~$\eta$, and in particular how it affects the class.
This is the subject of the next section.

\section{Conformal changes of parity}\label{sec_changes}

Let $\eta$ be a precontact form on a manifold~$M$.  
If we rescale it by a nowhere-vanishing function 
$f\in \Cinfty(M)$ we still have
$
\Ker\eta = \Ker(f\eta)\,.
$
Nevertheless, the resulting precontact structure $(M,f\eta)$ may behave in a
substantially different way from the original one.  
Some geometric structures associated with $\eta$, such as the Reeb and
Liouville vector fields, depend on $\eta$ in a non-trivial manner and
are therefore sensitive to such conformal transformations.
In fact, even the parity of the class of the precontact form can change,
as we are going to show.

Indeed, analysing the properties of the precontact form $f\eta$ amounts to understanding the pair
$$(f \eta, \d(f\eta)) = (f\eta ,  \d f \wedge \eta +f\d\eta)\,, $$
and it is clear that the additional term $\d f\wedge\eta$ can significantly modify the
underlying structure.

Instead of multiplying by a nowhere-vanishing function, 
it will be more convenient to write this multiplier as an exponential
$e^f$ of a function.
Also, to gain a wider perspective, we will consider a doublet $(\tau,\omega)$ of forms, with $\tau$ nowhere-vanishing, and, inspired by the previous comments, 
we will study the pair 
$$
\left(\strut e^f\tau, e^f(\d f\wedge \tau +\omega) \right) \,.
$$

\begin{remark}
Given a pair $(\tau,\omega)$, 
one could consider a simpler conformal transformation of the type
$(g\tau, g\omega)$,
with $g$ nowhere-vanishing.
If $(\tau,\omega)$ has constant class, $(g\tau, g\omega)$ has the same constant class, 
as their characteristic distributions coincide. 
The Reeb or Liouville vector fields of both pairs are also trivially related: 
if $\Delta$ is a Liouville of $(\tau,\omega)$ then it is also a Liouville of $(g\tau,g\omega)$, 
and if $R$ is a Reeb of $(\tau,\omega)$ then $R/g$ is a Reeb of $(g\tau,g\omega)$. 
However, these are not the conformal transformations natural to precontact structures.
\end{remark}

\begin{lemma} 
\label{lemma_formula_ef_times_form}
Let $\tau\in\Omega^1(M)$ be a $1$-form on $M$ and $f\in \Cinfty(M)$ a function. 
Then, for every $n \geq 1$, 
$$ (e^f(\d f\wedge \tau +\omega))^n = e^{nf}(n\, \d f \wedge\tau\wedge\omega^{n-1}+\omega^n)\,.$$
\end{lemma}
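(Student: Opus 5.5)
The plan is to factor out the scalar and then expand the remaining power with the binomial theorem, which is legitimate because even-degree forms commute. First, since $e^f$ is a function, we have $(e^f\alpha)^n = e^{nf}\alpha^n$ for any 2-form $\alpha$. So it suffices to prove
$$
(\d f\wedge\tau+\omega)^n = n\,\d f\wedge\tau\wedge\omega^{n-1}+\omega^n .
$$

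Put $\beta = \d f\wedge\tau$. Both $\beta$ and $\omega$ are 2-forms, so they commute under the wedge product. The binomial formula therefore holds:
$$
(\beta+\omega)^n=\sum_{k=0}^n\binom{n}{k}\beta^k\wedge\omega^{n-k}.
$$
The key observation is that $\beta\wedge\beta = \d f\wedge\tau\wedge\d f\wedge\tau = 0$, since reordering produces $\d f\wedge\d f=0$ (equivalently $\tau\wedge\tau=0$). Hence $\beta^k=0$ for all $k\ge2$. Only the terms $k=0$ and $k=1$ survive, which gives $\omega^n + n\,\beta\wedge\omega^{n-1}$, exactly the claimed expression.

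An alternative route is induction on $n$, if one prefers to avoid invoking the binomial theorem in a graded-commutative algebra. The case $n=1$ is immediate. For the inductive step, multiply the identity for $n$ by $\beta+\omega$. The product $\beta\wedge\beta\wedge\omega^{n-1}$ vanishes, and commuting $\omega$ past $\beta$ collects the two remaining $\beta$-terms into $(n+1)\,\beta\wedge\omega^n$.

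There is no real obstacle here. The only points that need care are the commutativity of 2-forms, which justifies the binomial expansion, and the vanishing of $\beta^2$. The convention $\omega^0=1$ handles the $n=1$ case.
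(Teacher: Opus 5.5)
Your proof is correct and is essentially the paper's argument: the paper proceeds by induction on $n$, exactly as in your alternative route, and your binomial expansion rests on the same two facts (2-forms commute under the wedge product, and $(\d f\wedge\tau)^2=0$), which the paper's inductive step uses without stating them. Your write-up is, if anything, more explicit than the paper's about why the cross terms collapse.
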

\begin{proof}
    Let us prove this by induction. For the case $n=1$ it is trivial.
    Now, assuming it is true for $n-1$, for the induction step we have
    \begin{align}
        (e^f(\d f\wedge \tau +\omega))^n &= (e^f(\d f\wedge \tau +\omega))^{n-1}\wedge(e^f(\d f\wedge \tau +\omega))
        \\
        &= e^{(n-1)f}((n-1)\d f\wedge \tau \wedge \omega^{n-2} + \omega^{n-1}) \wedge e^f(\d f\wedge\tau + \omega)
        \\
        &=e^{nf}(n\, \d f \wedge\tau\wedge\omega^{n-1}+\omega^n)\,,
    \end{align}
    which proves the result.
\end{proof}

Using this lemma we have
\begin{equation}\label{eq:after_lemma_eq}
    (e^f\tau) \wedge (e^f(\d f\wedge \tau +\omega))^n = e^{(n+1)f}(\tau \wedge \omega^n)\,,
\end{equation}
for all~$n$.

\begin{remark}
By Corollary~\ref{cor:prepair_rank}, 
if a pair $(\tau,\omega)$ has class $2r+1$ or $2r+2$ at a point, 
then the pair $(e^f \tau , e^f(\d f \wedge\tau + \omega))$ also has class $2r+1$ or $2r+2$ at the same point.
This class can either stay the same or change by one, i.e., from $2r+1$ to $2r+2$ or from $2r+2$ to $2r+1$. 
Moreover, even if $(\tau,\omega)$ has constant class,
its conformal rescaling may not, 
since the rank of $e^f(\d f \wedge\tau + \omega)$ may not be constant,
as shown in the next example.
\end{remark}

\begin{example} 
Consider $M = \R ^4$ with coordinates $(x,y,z,t)$, and 
$$\omega = \d x \wedge \d y\,, \qquad \tau = \d z\,, \qquad f= \frac{1}{2}t^2\,.$$
Then, $e^f(\d f \wedge \tau + \omega) = e^f (t\d t\wedge \d z +\d x \wedge \d y)$ and is clear that the rank of this 2-form is 4 if $t\neq 0$ and $2$ if $t=0$. 
It is clear that the class of the pair $(e^f\tau, e^f(\d f \wedge \tau + \omega))$ is $4$ if $t\neq0 $ and 3 if $t=0$. 
\end{example}

Given a precontact form $\eta$ on $M$, we can consider the form $z\eta$ on $M\times \R^{\times}$, where~$z$ is the canonical coordinate of the multiplicative group of nonzero reals $\R^{\times}$, and where we are considering the pull-back of $\eta \in \Omega^1(M)$ to $M\times \R^{\times}$. This process is known as the \dfn{presymplectization} of the precontact form (see~\cite{GG_23} for more details). We have the following result:
\begin{proposition}
Let $\eta$ be a precontact form on $M$ of class $2r+1$ or $2r+2$.
    Then the form $z\eta\in \Omega^1(M\times \R^{\times})$ is a precontact form of class $2r+2$.
\end{proposition}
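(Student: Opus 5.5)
Write $\tilde\eta = z\eta$ on $\widetilde M = M\times\R^{\times}$, so that $\d\tilde\eta = \d z\wedge\eta + z\,\d\eta$. The plan is to apply the even-class criterion of the preceding proposition on precontact forms, part~(ii), with $r$ unchanged. Concretely, we will verify on $\widetilde M$ that
$$
\tilde\eta\neq 0\,,\qquad \tilde\eta\wedge(\d\tilde\eta)^{r+1}=0\,,\qquad (\d\tilde\eta)^{r+1}\neq 0\,.
$$
The first condition is immediate, since $z\neq0$ and $\eta$ is nowhere-vanishing.

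For the powers, we argue exactly as in Lemma~\ref{lemma_formula_ef_times_form}. Since $\d z\wedge\eta$ is decomposable, its square vanishes, and $z\,\d\eta$ commutes with it. The binomial formula then gives
$$
(\d\tilde\eta)^{n} = n z^{n-1}\,\d z\wedge\eta\wedge(\d\eta)^{n-1} + z^n(\d\eta)^n\,.
$$
Wedging with $\tilde\eta=z\eta$ kills the first term, so $\tilde\eta\wedge(\d\tilde\eta)^{r+1} = z^{r+2}\,\eta\wedge(\d\eta)^{r+1}$. This vanishes by Corollary~\ref{cor:prepair_rank}, because $\eta$ has class $2r+1$ or $2r+2$. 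Note that these are pulled-back forms on $M$, so the identity holds on $\widetilde M$ as well.

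The main point is the third condition, $(\d\tilde\eta)^{r+1}\neq0$. Take $n=r+1$ in the formula above. The term $(r+1)z^r\,\d z\wedge\eta\wedge(\d\eta)^r$ is nonzero: $\eta\wedge(\d\eta)^r\neq0$ on $M$ by Corollary~\ref{cor:prepair_rank}, and it is the pullback of a form on $M$, which contains no $\d z$. Wedging it with $\d z$ therefore gives a nonzero form, and $z^r\neq0$. The other term, $z^{r+1}(\d\eta)^{r+1}$, is a pulled-back form from $M$ and hence contains no $\d z$ component. The two summands thus lie in complementary pieces of the splitting $\bigwedge\cT(M\times\R^\times) = \bigwedge \cT M\oplus \d z\wedge\bigwedge \cT M$, so their sum cannot vanish. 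Making this independence argument precise, by contracting with $\partial/\partial z$, is the only real step: $\inn{\partial_z}(\d\tilde\eta)^{r+1} = (r+1)z^r\,\eta\wedge(\d\eta)^r\neq0$.

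Hence the three conditions hold at every point of $\widetilde M$, and by the proposition $\tilde\eta$ is a precontact form of constant class $2r+2$. This applies in both cases, whether the class of $\eta$ is odd or even. As an alternative for the last step, one could observe that $\Delta=z\,\partial_z$ is a Liouville vector field for $(\tilde\eta,\d\tilde\eta)$ whenever the class of $\eta$ is odd. Indeed, $\inn{\Delta}\d\tilde\eta = z\eta$, so Theorem~\ref{thm:class-parity} gives even class. However, the direct computation above covers both cases uniformly, so that is the route we take.
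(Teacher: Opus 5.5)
Your proof is correct, but it settles the key step differently from the paper.

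The paper argues structurally, in three steps:
\begin{enumerate}
\item Pulling $\eta$ back to $M\times\R^{\times}$ adds $\partial/\partial z$ to the characteristic distribution, so the class is unchanged.
\item Multiplying by the nowhere-vanishing function $z$ keeps the class pointwise in $\{2r+1,2r+2\}$. The paper leaves this implicit; it rests on $z\eta\wedge(\d(z\eta))^n=z^{n+1}\eta\wedge(\d\eta)^n$ together with Corollary~\ref{cor:prepair_rank}.
\item Parity is fixed by exhibiting the Liouville vector field $z\,\partial/\partial z$, since $\inn{z\partial_z}\d(z\eta)=z\eta$. This gives even class, hence $2r+2$, at every point.
\end{enumerate}

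You replace the Liouville field by the nonvanishing of $\inn{\partial_z}(\d\tilde\eta)^{r+1}=(r+1)z^r\,\eta\wedge(\d\eta)^r$. You then feed everything into the algebraic criterion for even class.

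Your route is completely explicit, and it yields constancy of the class and of the rank of $\d(z\eta)$ in one computation. Through that criterion it leans on Lepage's theorem. Your binomial formula would even let you bypass it, since $(\d\tilde\eta)^{r+2}=(r+2)z^{r+1}\d z\wedge\eta\wedge(\d\eta)^{r+1}+z^{r+2}(\d\eta)^{r+2}=0$. The paper's route is shorter and more geometric: it identifies the Liouville field as the fundamental vector field of the $\R^{\times}$-action, which is the natural object in presymplectization.

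One small correction to your closing remark. $z\,\partial_z$ is a Liouville vector field of $(z\eta,\d(z\eta))$ whatever the parity of the class of $\eta$, because $\inn{z\partial_z}(\d z\wedge\eta+z\,\d\eta)=z\eta$ uses nothing about $\eta$. So that alternative also covers both cases uniformly, and it is precisely the paper's argument.
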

\begin{proof}
To prove this we can use some of the results we presented before. When we pull-back the $1$-form $\eta$ to $M\times \R^\times$ the rank of the characteristic distribution increases by 1, as $\fracp{}{z}$ is in the kernel of the pull-backed $1$-form and its differential. Thus, the codimension of the characteristic distribution, i.e.\ the class, remains the same. The $1$-form $z\eta \in \Omega^1(M \times \R^{\times})$ is necessarily of even class, because the vector field $z\fracp{}{z}$ is a Liouville vector field. Indeed, we have
$$\inn{z\fracp{}{z}}\d (z\eta) = \inn{z\fracp{}{z}}(\d z \wedge \eta + z\d \eta) = z\eta\,.$$
Note that the vector field $z\fracp{}{z}$ is also the fundamental vector field of the $\R ^{\times}$-action and $1\in \R$, naturally defined on $M \times \R^{\times}$.
    
\end{proof}

\subsection{Change of even parity}

The next propositions use the previous results to characterize which functions alter the parity of the class of a doublet $(\tau,\omega)$ of even class.

\begin{proposition}\label{change_of_class_parity}
    Let $(\tau,\omega)$ be a doublet of class $2r+2$, with $\tau$ nowhere-vanishing, and $f\in\Cinfty(M)$ a function. Then, $(e^f\tau, e^f(\d f\wedge \tau+\omega))$ is a doublet of class $2r+1$ if, and only if, for all Liouville vector fields, we have $$\Lie _{\Delta} f=-1\,.$$
\end{proposition}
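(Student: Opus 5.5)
The plan is to use the algebraic characterization of odd class in Proposition~\ref{conditions_odd_class} (pointwise, Lemma~\ref{lem:odd_conditions}). Write $\tau' = e^f\tau$ and $\omega' = e^f(\d f\wedge\tau+\omega)$. Then $(\tau',\omega')$ is a doublet of class $2r+1$ if and only if $\omega'^{r+1}=0$ and $\tau'\wedge\omega'^r\neq 0$ at every point. Since $(\tau,\omega)$ has class $2r+2$ and $\tau$ is nowhere-vanishing, Corollary~\ref{cor:prepair_rank} gives $\tau\wedge\omega^r\neq 0$ everywhere. By \eqref{eq:after_lemma_eq}, $\tau'\wedge\omega'^r = e^{(r+1)f}\,\tau\wedge\omega^r$, so the second condition is automatic. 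By Lemma~\ref{lemma_formula_ef_times_form}, the first condition is equivalent to the vanishing of the $(2r+2)$-form
$$
\Theta \coloneqq (r+1)\,\d f\wedge\tau\wedge\omega^r + \omega^{r+1}.
$$
So the whole statement reduces to showing that $\Theta=0$ if and only if $\Lie_\Delta f=-1$ for every Liouville vector field $\Delta$.

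The key computation is the contraction of $\Theta$ with vectors in $\Ker\tau$. For a Liouville field $\Delta$ we have $\inn{\Delta}\tau=0$ and $\inn{\Delta}\omega^{r}= r\,\tau\wedge\omega^{r-1}$. Hence $\tau\wedge\inn{\Delta}\omega^r=0$ and $\inn{\Delta}\omega^{r+1}=(r+1)\tau\wedge\omega^r$, which gives
$$
\inn{\Delta}\Theta = (r+1)\,(\Lie_\Delta f + 1)\,\tau\wedge\omega^r .
$$
Similarly, for $\Gamma\in\K_{(\tau,\omega)}=\Ker\omega$ (even class, Proposition~\ref{prop:ker_omega}) we get $\inn{\Gamma}\Theta=(r+1)(\Lie_\Gamma f)\,\tau\wedge\omega^r$.

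\emph{Necessity.} If $\Theta=0$, the formula for $\inn{\Delta}\Theta$ together with $\tau\wedge\omega^r\neq0$ gives $\Lie_\Delta f=-1$ for every Liouville field $\Delta$.

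\emph{Sufficiency.} Assume $\Lie_\Delta f=-1$ for all Liouville fields. By Proposition~\ref{prop:liouville_structure}, every $\Delta_0+\Gamma$ with $\Gamma\in\K$ is again a Liouville field. Subtracting the two conditions gives $\Lie_\Gamma f=0$ for all $\Gamma\in\K$; this passes from sections to vectors because every $\Gamma_x\in\K_x$ extends to a local section, since $\K$ is a regular distribution. Hence $\inn{v}\Theta_x=0$ for every $v$ in $W_x=\K_x\oplus\langle\Delta_x\rangle=\mathcal{X}_{(\tau,\omega)_x}$. By Proposition~\ref{prop:extended_char} this space has dimension $m-2r-1$.

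A form annihilated by contraction with all of $W_x$ lies in $\bigwedge(W_x^\circ)$, and $\dim W_x^\circ = 2r+1 < 2r+2=\deg\Theta$. Therefore $\Theta_x=0$ at every point, and the pair $(\tau',\omega')$ has constant class $2r+1$.

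I expect the main obstacle to be this final step, where one must see that the two sets of contraction conditions (along $\Delta$ and along $\K$) are enough to kill $\Theta$ by a dimension count. This requires using the freedom in the choice of Liouville field to get $\Lie_\Gamma f=0$, and the fact that $\Delta_x\notin\K_x$, which holds because $\tau_x\neq0$.
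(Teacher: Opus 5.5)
Your proof is correct. The necessity half is the paper's own argument: contract the $(r+1)$-st power of $e^f(\d f\wedge\tau+\omega)$ with a Liouville field and use $\tau\wedge\omega^r\neq0$.

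For sufficiency you take a different route. The paper first gets $\Lie_\Gamma f=0$ for $\Gamma\in\Ker\omega$, as you do. It then checks directly that $\Gamma$ and $\Delta_0$ lie in $\Ker(e^f\tau)\cap\Ker(e^f(\d f\wedge\tau+\omega))$, which is a computation involving only a $1$-form and a $2$-form. So the new characteristic distribution contains $\K_{(\tau,\omega)}\oplus\langle\Delta_0\rangle$, of codimension $2r+1$. The paper concludes using the remark that the rescaled pair has class $2r+1$ or $2r+2$, which comes from \eqref{eq:after_lemma_eq} and Corollary~\ref{cor:prepair_rank}.

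You instead stay with the single $(2r+2)$-form $\Theta$ and show that its kernel contains that same codimension-$(2r+1)$ subspace. You then kill $\Theta$ by the count that $\bigwedge^{2r+2}$ of a $(2r+1)$-dimensional space is zero. Proposition~\ref{conditions_odd_class} finishes, because $\tau'\wedge(\omega')^r\neq0$ is automatic.

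What each buys:
\begin{itemize}
\item Your version derives both directions from one contraction formula and reduces the converse to a pure exterior-algebra fact.
\item The paper's version gives extra geometric information: the characteristic distribution of the rescaled pair is exactly the old extended characteristic distribution $\mathcal{X}_{(\tau,\omega)}=\K_{(\tau,\omega)}\oplus\langle\Delta_0\rangle$.
\end{itemize}

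One small point: the hypothesis is about globally defined Liouville fields. So when you pass from sections to vectors, extend $\Gamma_x$ to a global section of $\K_{(\tau,\omega)}$, by cutting off a local section with a bump function, rather than only to a local one.
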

\begin{proof}
   Let us first assume that 
   $(e^f\tau, e^f(\d f\wedge \tau+\omega))$ 
   is a doublet of class $2r+1$. We will denote $\widetilde \omega \coloneqq \d f \wedge \tau+\omega$. As the doublet $(\tau,\omega)$ has even class, there must exist Liouville vector fields $\Delta \in \X(M)$.
    If we contract $(e^f\widetilde \omega)^{r+1}$ by any Liouville vector field $\Delta$ of $(\tau,\omega)$, we obtain
\begin{align}\label{Liouville_function_class}
    \begin{aligned}
        \inn{\Delta}(e^f\widetilde \omega )^{r+1} &=  e^{(r+1)f}((r+1)\, \inn{\Delta}(\d f \wedge\tau\wedge\omega^{r})+\inn{\Delta}\omega^{r+1})
        \\
        &=e^{(r+1)f}(((r+1)\Lie_{\Delta}f)\,\tau\wedge\omega^{r} + (r+1) \,\tau\wedge\omega^{r})
        \\
        &=(e^{(r+1)f}(r+1)(\Lie_{\Delta} f +1 ))\,\tau\wedge\omega^{r}\,.
        \end{aligned}
    \end{align}
    where we have used Lemma \ref{lemma_formula_ef_times_form} in the first equality.
Now, as the class of $(\tau,\omega)$ is $2r+2$, and the class of $(e^f \tau, e^f \widetilde \omega )$ is $2r+1$, by Propositions~\ref{conditions_odd_class} and~\ref{eq:even_conditions}, we have
$(e^f\widetilde \omega)^{r+1}=0$ and $ \qquad\tau\wedge\omega^r\neq 0$. 
Therefore, it follows from equation \eqref{Liouville_function_class} that 
$ \Lie_{\Delta} f +1=0 $,
for all Liouville vector fields $\Delta$ of the doublet $(\tau,\omega)$, as we wanted to see.

To see the converse first recall that all the Liouville vector fields $\Delta\in\X(M)$ of $(\tau,\omega)$
can be written as $\Delta = \Delta_0+\Gamma$, where $\Delta_0\in\X(M)$ is a particular Liouville and $\Gamma\in \Ker\omega$ (see Proposition~\ref{prop:liouville_structure}). Thus, if the assumption is satisfied for every Liouville vector field, one has that
$$\inn{\Delta_0}\d f+1=0\qquad \text{and} \qquad \inn{\Delta_0+\Gamma}\d f +1=\inn{\Delta_0}\d f+\inn{\Gamma}\d f+1=0\,,$$
which implies that 
$\inn{\Gamma}\d f = 0$,
for all $\Gamma \in \Ker\omega$. 
Using this and $\Ker\omega \subset \Ker \tau$ (due to the class being even), it is easy to check that
$\inn{\Gamma} (e^f\widetilde \omega) = 0$,
and also that
$ \inn{\Delta}(e^f\widetilde \omega) = e^f((\inn{\Delta} \d f)\tau + \tau) = 0$.
Thus, we have
$ \mathcal{X}_{(\tau,\omega)} =\K_{(\tau,\omega)} \oplus \left\langle \Delta_0 \right\rangle  \subseteq \Ker (e^f \tau )\cap \Ker (e^f\widetilde \omega)$,
but as the class of the modified pair cannot increase or decrease by more than 1, we necessarily have
$$\mathcal{X}_{(\tau,\omega)}=\K_{(\tau,\omega)} \oplus \left\langle \Delta_0 \right\rangle  = \Ker (e^f \tau )\cap \Ker (e^f\widetilde \omega)\,,$$
which is the characteristic distribution of the conformally transformed pair
$(e^f\tau,e^f\widetilde \omega)$;
therefore its class is $2r+1$.
\end{proof}


\begin{example}
    A function that changes the parity of a pair $(\tau,\omega)$ of even class may not always exist. 
    Consider $M = \R ^4$, $\tau= \d z$ and $\omega =  \d t\wedge \d z - y\d x\wedge \d z + x\d y\wedge \d z$. Then
    $$\Ker \tau= \left\langle \fracp{}{x} , \fracp{}{y},\fracp{}{t} \right \rangle\,,\qquad \Ker \omega= \left\langle \fracp{}{x} +y \fracp{}{t}, \fracp{}{y}-x\fracp{}{t} \right \rangle\,.$$
    As $\Ker \omega \subset \Ker \tau$, by Theorem~\ref{thm:class-parity}, the class is even, and equal to $2$. 
    Any Liouville vector field of the pair is given by
    $\Delta = \fracp{}{t} + \Gamma\,,$
    where $\Gamma \in \Ker \omega$. 
    Thus, to find a function $f \in \Cinfty(M)$ satisfying $\Lie_\Delta{f} = -1$ for all Liouville vector fields is equivalent to solving the system of linear PDE's
    $$ 
    \fracp{f}{t} = -1 \,,\qquad 
    \fracp{f}{x}+y\fracp{f}{t} = 0\,, \qquad 
    \fracp{f}{y}-x\fracp{f}{t} = 0\,,  
    $$
    which has no solution.
\end{example}

The next result provides sufficient conditions for the existence of a function that changes the parity.

\begin{proposition}\label{sufficient_cond_function}
    Let $(\tau,\omega)$ be a doublet of class $2r+2$, with $\tau$ nowhere-vanishing. Suppose that:
    \begin{itemize}
        \item $\K_{(\tau,\omega)} = \Ker \omega$ is an integrable distribution, and
        \item for every Liouville vector field~$\Delta$ of $(\tau,\omega)$ and $\Gamma \in \Ker \omega$, we have $[\Delta,\Gamma] \subset \Ker \omega$.
    \end{itemize}
    Then, around any point there exists a neighbourhood $V$ and a nowhere-vanishing function $g\in \Cinfty(V)$ 
    such that the pair $(g\tau,\d g\wedge \tau + g\omega)$ has constant class $2r+1$ on $V$.
\end{proposition}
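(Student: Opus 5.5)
By Proposition~\ref{change_of_class_parity}, it suffices to find, near any point, a function $f$ with $\Lie_\Delta f=-1$ for every Liouville vector field $\Delta$ of $(\tau,\omega)$. We then set $g=e^f$, which is nowhere-vanishing. This choice gives
$$(g\tau,\ \d g\wedge\tau+g\omega)=(e^f\tau,\ e^f(\d f\wedge\tau+\omega)).$$
As in the proof of Proposition~\ref{change_of_class_parity}, and using Proposition~\ref{prop:liouville_structure} ($\Delta=\Delta_0+\Gamma$ with $\Gamma\in\Ker\omega$), the condition is equivalent to the pair of equations
$$\Lie_{\Delta_0}f=-1\,,\qquad \Lie_\Gamma f=0 \ \text{ for all } \Gamma\in\Ker\omega\,,$$
where $\Delta_0$ is a fixed Liouville vector field. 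So the task is to solve this first-order linear system locally.

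First I would fix a local Liouville field $\Delta_0$, which exists by Theorem~\ref{thm:class-parity}, and consider the distribution $D=\Ker\omega\oplus\langle\Delta_0\rangle$. This is exactly $\mathcal X_{(\tau,\omega)}$ by Proposition~\ref{prop:extended_char}. Since $\tau\neq0$ and $\Delta_0\notin\Ker\omega$, it has constant rank $m-2r-1$. The two hypotheses say precisely that $D$ is involutive: $\Ker\omega$ is involutive by the first hypothesis, and $[\Delta_0,\Gamma]\in\Ker\omega$ by the second. By Frobenius, near any point $x_0$ there are coordinates $(y^1,\dots,y^k,w^1,\dots,w^{2r+1})$ with $D=\langle\partial/\partial y^1,\dots,\partial/\partial y^k\rangle$. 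Inside each leaf of $D$, the subdistribution $\Ker\omega$ is involutive of codimension one.

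Second, I would refine the coordinates so that $\Ker\omega$ is also straightened compatibly. Because $\Ker\omega$ is involutive and $[\Delta_0,\Ker\omega]\subset\Ker\omega$, the flow of $\Delta_0$ preserves $\Ker\omega$. Choose a local hypersurface $S$ through $x_0$ transversal to $\Delta_0$ that contains the local $\Ker\omega$-leaves (for instance $S$ a union of $\Ker\omega$-leaves, built from Frobenius coordinates for $\Ker\omega$ alone). Then define $f$ by $f|_S=0$ and $f(\phi^{\Delta_0}_s(p))=-s$ for $p\in S$, where $\phi^{\Delta_0}$ is the flow. This $f$ is smooth on a neighbourhood $V$ and satisfies $\Lie_{\Delta_0}f=-1$ by construction.

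The main obstacle is checking $\Lie_\Gamma f=0$ for $\Gamma\in\Ker\omega$, that is, that the level sets of $f$ are unions of $\Ker\omega$-leaves. The level set $\{f=-s\}$ is $\phi_s(S)$. Since $S$ is tangent to $\Ker\omega$ along a saturated set and the flow maps $\Ker\omega$ to itself (by $[\Delta_0,\Gamma]\in\Ker\omega$, so $(\phi_s)_*\Ker\omega=\Ker\omega$), each $\phi_s(S)$ is again $\Ker\omega$-saturated. Hence $\Ker\omega$ is tangent to the level sets of $f$, which gives $\d f|_{\Ker\omega}=0$. One must take care that $S$ is genuinely a hypersurface of $M$ containing the $\Ker\omega$ directions and transversal to $\Delta_0$; this is possible because $\Delta_0\notin\Ker\omega$. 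Then Proposition~\ref{change_of_class_parity} gives constant class $2r+1$ on $V$ for $g=e^f$.
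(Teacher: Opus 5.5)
Your proof is correct. It uses the same flow-box idea as the paper but carries it out without coordinates, which is a genuinely different execution.

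\textbf{What the paper does.} It takes Frobenius coordinates $(x^1,\dots,x^\mu,x^{\mu+1},\dots,x^m)$ adapted to $\Ker\omega$. The bracket hypothesis then forces the transverse components $b_j$ of $\Delta_0$ to depend only on $x^{\mu+1},\dots,x^m$, so $\Delta_0$ projects to the local leaf space. The paper straightens the projected field $\Delta_{00}=\sum_j b_j\,\partial/\partial x^j$ in those transverse variables and takes $f=-z$.

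\textbf{What you do.} You work upstairs on $M$. You use the equivalent fact that the flow of $\Delta_0$ preserves $\Ker\omega$. You then flow out a $\Ker\omega$-saturated hypersurface $S$ transversal to $\Delta_0$, and set $f$ equal to minus the flow time. Your $S$ projects to a hypersurface transversal to $\Delta_{00}$ in the leaf space, and your $f$ is exactly the paper's $-z$.

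\textbf{Comparison.} Your version gives an explicit, geometric reason why $f$ is constant along the leaves. The paper leaves implicit that $\Delta_{00}$ must be straightened within the transverse variables only, since otherwise $f=-z$ need not satisfy $\Lie_\Gamma f=0$. The cost is that you rely on a standard lemma, which you should state: if $[\Delta_0,\Gamma]\in\Ker\omega$ for every section $\Gamma$ of $\Ker\omega$, then $(\phi_s)_*\Ker\omega=\Ker\omega$. This lemma holds for any constant-rank distribution and needs no involutivity. Involutivity of $\Ker\omega$ is used only to construct $S$. For example, in Frobenius coordinates $(x,y)$ you can take $S=\{c\cdot(y-y_0)=0\}$ with $c\cdot b(x_0)\neq0$.

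\textbf{Minor point.} Your first step, applying Frobenius to $D=\mathcal X_{(\tau,\omega)}$, is never used afterwards and can be dropped.
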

\begin{proof}
    Let $M$ be a manifold of dimension $m$. By hypothesis, $\Ker \omega$ defines an integrable distribution of dimension $\mu = m-2r-2$. 
    By Frobenius' theorem, at any point $x \in  M$, there exist local coordinates $(x^1, \dotsc, x^{\mu},x^{\mu+1}, \dots, x^m)$ such that $\Ker \omega = \langle \fracp{}{x^1},\dotsc, \fracp{}{x^{\mu}}  \rangle$. 
    
    Let $\Delta_0$ be a particular Liouville vector field of the  doublet $(\tau,\omega)$. The condition 
    $[\Delta_0 , \Gamma] \in \Ker \omega$
    for all $\Gamma \in \Ker \omega$, implies that
    $$\Delta_ 0 = \sum_{i=1}^\mu a_i(x) \fracp{}{x^i} + \sum_{j=\mu+1}^{m}b_j(x^{\mu+1},\dotsc,x^m)\fracp{}{x^j}\,.$$
    By Proposition~\ref{change_of_class_parity}, we need to find a function $f$ such that $\Lie_\Delta f = -1$ for every Liouville vector field~$\Delta$. This implies 
    that $\Lie_{\Gamma} f = 0$ for all $\Gamma \in \Ker \omega$, which in these coordinates is equivalent to $f$ being a function only of $x^{\mu +1}, \dotsc, x^m$. Hence, we need to find a function $f(x^{\mu+1}, \dotsc,x^m)$ such that
    $$\sum_{j=\mu+1}^{m}b_j(x^{\mu+1},\dotsc,x^m)\fracp{f(x^{\mu+1}, \dotsc,x^m)}{x^j}=-1\,.$$
This is a first-order, inhomogeneous, linear PDE. 
Now,
$
\Delta_{00} = \sum_{j=\mu+1}^{m}b_j(x^{\mu+1},\dotsc,x^m)\fracp{}{x^j}
$
is a Liouville vector field, and is nowhere vanishing by definition.
Therefore there exist coordinates where it can be straightened out,
say
$\Delta_{00} = \partial/\partial z$.
Therefore the PDE becomes
$
\frac{\partial f}{\partial z} = -1 
$,
which proves the local existence of $f$.
\end{proof}

The condition $[\Gamma,\Gamma']\in \Ker \omega$, for all $\Gamma,\Gamma' \in \Ker \omega$ is equivalent to
$$\inn{\Gamma}\inn{\Gamma'}\d\omega = 0\,,$$
for all $\Gamma, \Gamma' \in \Ker \omega$.

Note that, for a doublet $(\tau,\omega)$ of even class,
the condition $[\Delta, \Gamma] \in \Ker \omega$ for all Liouville vector fields~$\Delta$ and all $\Gamma \in \Ker \omega$ is
$$\inn{[\Delta, \Gamma]} \omega = \Lie_{\Delta} (\inn{\Gamma}\omega) - \inn{\Gamma} (\Lie_{\Delta} \omega) = -\inn{\Gamma} (\inn{\Delta} \d \omega + \d\tau)= \inn{\Delta} \inn{\Gamma} \d \omega -\inn{\Gamma} \d \tau= 0\,.$$
If the first condition of Proposition~\ref{sufficient_cond_function} is satisfied, then it is enough to check this for a particular Liouville vector field $\Delta_0$. Indeed, as all the other Liouville vector fields can be written as $\Delta = \Delta_0 + \Gamma'$, with $\Gamma' \in \Ker \omega$, if we have $\inn{[\Delta_0, \Gamma]} \omega=0$, then
$$\inn{[\Delta, \Gamma]} \omega= \inn{\Delta_0+ \Gamma'} \inn{\Gamma} \d \omega -\inn{\Gamma} \d \tau=\inn{\Delta_0} \inn{\Gamma} \d \omega -\inn{\Gamma} \d \tau+ \inn{\Gamma'}\inn{\Gamma}\d\omega = \inn{\Gamma'}\inn{\Gamma}\d \omega=0\,.$$

In the particular case of precontact pair $(\eta,\d \eta)$ of even class, 
the hypotheses of the last proposition are always satisfied, 
as seen in Proposition~\ref{prop:preco_involutivity} and Remark~\ref{rem:Liou_involutivity}.
Therefore we have:

\begin{corollary}\label{existence_of_f}
    Let $\eta \in \Omega^{1}(M)$ be a nowhere-vanishing $1$-form of constant class $2r$ on $M$. Then, for any point $x\in M $, there exists a neighbourhood $V$ of $x$ and a nowhere-vanishing function $g\in \Cinfty(V)$, such that 
    $$\eta'=g\, \eta |_V$$
    is of constant class $2r-1$\,.
\end{corollary}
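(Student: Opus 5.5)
We plan to apply Proposition~\ref{sufficient_cond_function} to the doublet $(\tau,\omega)=(\eta,\d\eta)$, with the index shifted: the class $2r$ plays the role of $2r'+2$ with $r'=r-1$. The key observation is that, with $g$ nowhere-vanishing,
$$
(g\eta,\ \d g\wedge\eta+g\,\d\eta)=(g\eta,\ \d(g\eta))\,,
$$
so the pair produced by that proposition is exactly the pair $(\eta',\d\eta')$ attached to $\eta'=g\,\eta|_V$. Its class is, by definition, the class of the $1$-form $\eta'$. Moreover $\eta'$ is nowhere-vanishing because $\eta$ and $g$ are. Hence $\eta'$ will be a precontact form of constant class $2r-1$ on $V$ once the hypotheses of Proposition~\ref{sufficient_cond_function} are checked.

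The checks run in the following order. First, $(\eta,\d\eta)$ is a doublet of even class $2r$ with $\eta$ nowhere-vanishing. By Theorem~\ref{thm:class-parity} we have $\Ker\d\eta\subset\Ker\eta$, so $\K_{(\eta,\d\eta)}=\Ker\d\eta$. Second, this distribution has constant rank, since the class is constant (Proposition~\ref{prop:doublet_conditions}). It is involutive by Proposition~\ref{prop:preco_involutivity}, and therefore integrable by Frobenius. Third, for every Liouville vector field $\Delta$ and every $\Gamma\in\Ker\d\eta$ we have $[\Delta,\Gamma]\in\Ker\d\eta$, which is Remark~\ref{rem:Liou_involutivity}. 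One can verify this directly: by Cartan's formula, $\Lie_\Delta\d\eta=\d\inn{\Delta}\d\eta=\d\eta$, hence
$$
\inn{[\Delta,\Gamma]}\d\eta=\Lie_\Delta(\inn{\Gamma}\d\eta)-\inn{\Gamma}\Lie_\Delta\d\eta=-\inn{\Gamma}\d\eta=0\,.
$$

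With these hypotheses in place, Proposition~\ref{sufficient_cond_function} supplies, around any $x$, a neighbourhood $V$ and a nowhere-vanishing $g$ with $(g\eta,\d(g\eta))$ of constant class $2r-1$. Concretely, $g=e^f$, where $f$ solves $\Lie_{\Delta}f=-1$ and is constant along $\Ker\d\eta$. The main point needing care is inside that proposition: straightening the nowhere-vanishing field $\Delta_{00}$ to solve the PDE. The field $\Delta_{00}$ is nonzero because $\inn{\Delta}\d\eta=\eta\neq0$, so $\Delta\notin\Ker\d\eta$, and $\Delta_{00}$ differs from $\Delta_0$ only by a section of $\Ker\d\eta$. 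After this, Proposition~\ref{change_of_class_parity} gives the class $2r-1$. Finally, this class is attained everywhere on $V$ because the argument there, via the characteristic distribution, is pointwise.
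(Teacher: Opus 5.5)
Your proposal is correct and follows the paper's own argument. The paper obtains the corollary by applying Proposition~\ref{sufficient_cond_function} to $(\eta,\d\eta)$, using $(g\eta,\d g\wedge\eta+g\,\d\eta)=(g\eta,\d(g\eta))$, with the two hypotheses supplied by Proposition~\ref{prop:preco_involutivity} and Remark~\ref{rem:Liou_involutivity}. Your additional checks only make explicit what the paper leaves implicit: the index shift $r'=r-1$, the Cartan-formula verification that $[\Delta,\Gamma]\in\Ker\d\eta$, and the reason $\Delta_{00}$ is nowhere-vanishing.
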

See~\cite{god_1969} for another proof of this result. 

Observe also that if the class of a pair $(\tau,\omega)$, with $\tau$ nowhere-vanishing, is even and equal to the dimension of the manifold, then the Liouville vector field is unique, $\Ker \omega$ is trivial, and thus the conditions of Proposition~\ref{sufficient_cond_function} are satisfied trivially.

\begin{example}
In~\cite{LGGMR_23} 
the study of the contact Lagrangian $L=vs$ led to the 1-form 
$$\eta= \d s - s \,\d q\,, $$
defined on $M = \Tan \R \times \R$ with coordinates $(q,v,s)$. 
It was noted that it does not possess Reeb vector fields.

Indeed, it is easy to see that $\eta$ has class $2$, since
    $$\d \eta = \d q \wedge \d s \qquad\text{and}\qquad \eta \wedge \d\eta = 0\,.$$
    Therefore, there exist Liouville vector fields for it. 
    They are $\Delta = \fracp{}{q}+b \fracp{}{v} + s\fracp{}{s}$, 
    where $b\in \Cinfty(M)$ is any arbitrary function.
    
    By Proposition~\ref{change_of_class_parity}, 
    the functions $f$ such that the $1$-form $e^f\eta$ has class~1 are those that satisfy 
    $\Lie_{\Delta }f = -1$, i.e.\
    $$\fracp{f}{q}+s\fracp{f}{s}+1=0 \qquad \text{and} \qquad \fracp{f}{v}=0\,.$$
    Let us take, for example, $f=-q$. Then, the $1$-form 
    $$e^{-q}\,\eta=e^{-q} \,\d s - e^{-q}s\,\d q$$
    has class 1. Indeed, we can find its Darboux coordinates (see Proposition~\ref{pc_darboux_odd}) easily, as
    $$e^{-q}\eta = \d (e^{-q}s)\,.$$
\end{example}

\subsection{Preservation of odd parity}
 
We can obtain similar results to characterize the functions which \emph{preserve} the odd parity of a doublet~$(\tau,\omega)$.

\begin{proposition} 
    Let $(\tau,\omega)$ be a doublet of class $2r+1$, and $f\in\Cinfty(M)$ a function. Then, the pair $(e^f\tau, e^f(\d f\wedge \tau+\omega))$ is a doublet of class $2r+1$ if and only if, for all Reeb vector fields $R\in\X(M)$ of the doublet $(\tau,\omega)$, the function satisfies 
    \begin{equation}\label{maintain_odd_class}
        (\inn{R} \d f)\,\tau\wedge\omega^r = \d f\wedge \omega^r\,.
    \end{equation}
\end{proposition}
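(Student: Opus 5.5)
The plan is to reduce the statement to the algebraic characterization of odd class in Proposition~\ref{conditions_odd_class}, applied to the transformed pair, exactly as was done for the even case in Proposition~\ref{change_of_class_parity}. Write $\widetilde\omega \coloneqq \d f\wedge\tau+\omega$. Since $(\tau,\omega)$ has class $2r+1$, Proposition~\ref{conditions_odd_class} gives $\omega^{r+1}=0$ and $\tau\wedge\omega^r\neq0$; moreover $\tau$ is nowhere-vanishing by Proposition~\ref{prop:doublet_conditions}. By equation~\eqref{eq:after_lemma_eq}, $(e^f\tau)\wedge(e^f\widetilde\omega)^r = e^{(r+1)f}\,\tau\wedge\omega^r\neq 0$ everywhere. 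Hence, again by Proposition~\ref{conditions_odd_class}, the transformed pair is a doublet of class $2r+1$ if and only if $(e^f\widetilde\omega)^{r+1}=0$. By Lemma~\ref{lemma_formula_ef_times_form} and $\omega^{r+1}=0$, this is equivalent to
\begin{equation}
\d f\wedge\tau\wedge\omega^{r}=0\,.
\end{equation}
So the whole proof reduces to showing that this condition is equivalent to \eqref{maintain_odd_class} holding for every Reeb vector field $R$.

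For the direction from \eqref{maintain_odd_class} to the vanishing condition, wedge \eqref{maintain_odd_class} on the left with $\tau$. The left-hand side becomes $(\inn{R}\d f)\,\tau\wedge\tau\wedge\omega^r=0$, and the right-hand side becomes $\tau\wedge\d f\wedge\omega^r=-\d f\wedge\tau\wedge\omega^r$. Hence $\d f\wedge\tau\wedge\omega^r=0$. Here a single Reeb vector field suffices.

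For the converse, take any Reeb vector field $R$, which exists by Theorem~\ref{thm:class-parity}, and contract $\d f\wedge\tau\wedge\omega^r=0$ with $R$. Using $\inn{R}\tau=1$ and $\inn{R}\omega=0$ (so that $\inn{R}\omega^r=0$), we get
\begin{equation}
0=(\inn{R}\d f)\,\tau\wedge\omega^r-\d f\wedge\omega^r\,,
\end{equation}
which is exactly \eqref{maintain_odd_class}. Since $R$ was arbitrary, the condition holds for all Reeb vector fields.

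The main point requiring care is the first reduction. One must check that the nonvanishing condition in Proposition~\ref{conditions_odd_class} holds automatically, so that constancy of the class of the new pair needs no separate argument. Equation~\eqref{eq:after_lemma_eq} guarantees this pointwise, and Proposition~\ref{conditions_odd_class} already encodes that these two pointwise conditions give a doublet. The remaining steps are routine sign bookkeeping in the contractions, namely $\inn{R}(\d f\wedge\tau\wedge\omega^r)$ expanded by the graded Leibniz rule.
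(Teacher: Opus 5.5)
Your proposal is correct and follows essentially the same route as the paper. Both arguments use Lemma~\ref{lemma_formula_ef_times_form}, equation~\eqref{eq:after_lemma_eq} and Proposition~\ref{conditions_odd_class} to reduce to the vanishing of $\d f\wedge\tau\wedge\omega^r$, then pass to the Reeb condition by contracting with $R$ in one direction and wedging with $\tau$ in the other (the wedge direction needs a Reeb vector field to exist, which you secure via Theorem~\ref{thm:class-parity}).
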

\begin{proof}
    Let us denote $\widetilde \omega\coloneqq \d f \wedge \tau +\omega$. First, assume that $(e^f\tau, e^f\widetilde \omega)$ is a doublet of class $2r+1$. Then, necessarily, it satisfies $(e^f\widetilde \omega)^{r+1}=0$, by Proposition~\ref{conditions_odd_class}. Now, using Lemma~\ref{lemma_formula_ef_times_form}, we have
    $$(e^f\widetilde \omega)^{r+1} = e^{(r+1)f}((r+1)\, \d f \wedge\tau\wedge\omega^{r}+\omega^{r+1}) = e^{(r+1)f}((r+1)\, \d f \wedge\tau\wedge \omega^{r})\,,$$
    where we have used that $\omega^{r+1}=0$, which follows from $(\tau,\omega)$ having class $2r+1$, in the last equality. Necessarily, 
    $$\d f\wedge\tau\wedge\omega^r=0\,.$$
    And if we contract this last expression by any Reeb vector field $R \in \X(M)$ of the doublet $(\tau,\omega)$, we obtain
    \begin{align}
        \inn{R}(\d f\wedge\tau\wedge\omega^r) &= (\inn{R}\d f)\tau\wedge\omega^r - \d f \wedge\omega^r=0\,,
    \end{align}
    which proves the first implication.

    The converse follows directly from equation~\eqref{eq:after_lemma_eq} and Proposition~\ref{conditions_odd_class}, as $$(e^f\tau)\wedge (e^f\widetilde\omega)^{r}=e^{(r+1)f}(\tau\wedge \omega ^r)\neq 0\,, $$ and
    $$(e^f(\d f\wedge \tau+\omega))^{r+1} = e^{(r+1)f}((r+1)\, \d f \wedge\tau\wedge\omega^{r}+\omega^{r+1}) = e^{(r+1)f}((r+1)\, \d f \wedge\tau\wedge \omega^{r})\,,$$
    is equal to 0, because
    $$\d f \wedge\tau\wedge\omega^{r}=-\tau\wedge\d f\wedge\omega^r=-(\inn{R}\d f)\, \tau\wedge\tau\wedge\omega^r = 0\,,$$
    where we used the hypothesis in the last equality.
\end{proof}

It is well-known that if $(M,\eta)$ is a contact manifold, with $\dim M = 2n+1$, then $(M,g\eta)$ is still a contact manifold for any nowhere-vanishing function $g\in \Cinfty(M)$. As $(M,\eta)$ is a contact manifold if and only if the class of $\eta$ is $2n+1$, we have the following corollary:
\begin{corollary}
    Let $(M,\eta)$ be a contact manifold, with $\dim M = 2n+1$, and let $R$ be its Reeb vector field. Then, the equality 
    $$(\Lie_R f)\eta\wedge (\d \eta )^n = \d f \wedge (\d \eta) ^n$$
    holds for any function $f \in \Cinfty (M)$.
\end{corollary}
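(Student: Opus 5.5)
The plan is to read the corollary as a direct application of the preceding proposition on preservation of odd parity, with $(\tau,\omega)=(\eta,\d\eta)$ and $r=n$. The argument runs in the reverse direction: we already know the conclusion of that proposition holds, and we extract the identity~\eqref{maintain_odd_class}.

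First, we note that $(\eta,\d\eta)$ is a doublet of class $2n+1=\dim M$. By Proposition~\ref{conditions_odd_class}, this is equivalent to $\eta\wedge(\d\eta)^n\neq 0$ and $(\d\eta)^{n+1}=0$. The second condition is automatic, since it is a $(2n+2)$-form on a $(2n+1)$-manifold. The Reeb vector field $R$ is unique, because the characteristic distribution is zero (Proposition~\ref{reeb_structure}).

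Next, given any $f\in\Cinfty(M)$, we observe that
$$
\left(e^f\eta,\ e^f(\d f\wedge\eta+\d\eta)\right)=\left(e^f\eta,\ \d(e^f\eta)\right).
$$
Since $e^f$ is nowhere-vanishing, the well-known fact recalled before the corollary says that $e^f\eta$ is again a contact form. This can also be checked directly from~\eqref{eq:after_lemma_eq}:
$$
e^f\eta\wedge(\d(e^f\eta))^n=e^{(n+1)f}\,\eta\wedge(\d\eta)^n\neq0,
$$
while the $(n+1)$-st power vanishes for dimensional reasons. Hence the transformed pair is a doublet of class $2n+1$, by Proposition~\ref{conditions_odd_class}.

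Finally, we apply the ``only if'' direction of the preceding proposition. It yields
$$
(\inn{R}\d f)\,\eta\wedge(\d\eta)^n=\d f\wedge(\d\eta)^n
$$
for the (unique) Reeb vector field $R$. Since $\inn{R}\d f=\Lie_R f$, this is exactly the claimed identity.

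I do not expect a genuine obstacle here; the only point needing care is confirming that the hypotheses of the proposition hold for every $f$. As a sanity check, one could also verify the identity directly. Both sides are top forms, hence multiples of the volume form $\eta\wedge(\d\eta)^n$. Contracting with $R$ gives $\Lie_R f\,(\d\eta)^n$ on both sides, and this determines the coefficient.
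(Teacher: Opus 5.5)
Your proposal is correct and follows the paper's route. Since $e^f\eta$ is again a contact form, the pair $(e^f\eta,\d(e^f\eta))$ keeps class $2n+1$, and the ``only if'' direction of the preceding proposition on preservation of odd parity gives the identity for the unique Reeb field; your closing sanity check (comparing the contractions of the two top forms with $R$) is essentially the alternative proof the paper mentions, which contracts the automatically vanishing $(2n+2)$-form $\d f\wedge\eta\wedge(\d\eta)^n$ with $R$.
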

This corollary can also be proven by contracting the $(2n+2)$-form $\d f\wedge \eta \wedge (\d \eta)^n=0$ with the Reeb vector field.

\begin{proposition}\label{odd_class_stay}
        Let $(\tau,\omega)$ be a doublet of class $2r+1$, and $f\in\Cinfty(M)$. Then, the pair $(e^f\tau, e^f(\d f\wedge \tau+\omega))$ is a doublet of class $2r+1$ if, and only if, 
    $$\Lie_{\Gamma} f = 0\,,$$
for every $\Gamma \in \K_{(\tau , \omega)} = \Ker \tau \cap \Ker \omega$.
\end{proposition}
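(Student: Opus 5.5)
The plan is to reduce the statement to the previous proposition. That result says the transformed pair has class $2r+1$ if and only if $(\inn{R}\d f)\,\tau\wedge\omega^r = \d f\wedge\omega^r$ for all Reeb vector fields $R$. Its proof also shows that this condition is equivalent to the single identity $\d f\wedge\tau\wedge\omega^r=0$. Indeed, contracting that identity with a Reeb vector field gives the condition, and conversely the condition implies the identity. So it suffices to prove
\[
\d f\wedge\tau\wedge\omega^r = 0 \quad\Longleftrightarrow\quad \Lie_\Gamma f = 0 \ \text{ for all } \Gamma\in\K_{(\tau,\omega)}\,.
\]
This is a pointwise linear-algebra statement. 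It uses $\cl(\tau,\omega)=2r+1$, which gives $\tau\wedge\omega^r\neq0$ and $\omega^{r+1}=0$ by Proposition~\ref{conditions_odd_class}.

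For $(\Rightarrow)$, take $\Gamma\in\K$ and contract the identity with $\Gamma$. Since $\inn{\Gamma}\tau=0$ and $\inn{\Gamma}\omega=0$, we get $0=\inn{\Gamma}(\d f\wedge\tau\wedge\omega^r)=(\Lie_\Gamma f)\,\tau\wedge\omega^r$. Because $\tau\wedge\omega^r$ is nowhere zero, $\Lie_\Gamma f=0$.

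For $(\Leftarrow)$, I will identify the $(2r+1)$-form $\tau\wedge\omega^r$ as a decomposable top form on the annihilator $\K^\circ$. By Proposition~\ref{prop:kernel_B}, $\K^\circ$ has dimension $2r+1$ and contains $\tau$ and $\Im\widehat\omega=(\Ker\omega)^\circ$. Hence, in a basis $\theta^1,\dots,\theta^{2r+1}$ of $\K^\circ_x$, both $\tau$ and $\omega$ are polynomials in the $\theta^i$. One can see this, for instance, from Darboux-type linear normal forms: in a basis adapted to the class, $\omega_x$ lies in $\bigwedge^2\K^\circ_x$, since $\inn{v}\omega=0$ for $v\in\K$. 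Therefore $\tau\wedge\omega^r$ is a nonzero multiple of $\theta^1\wedge\dots\wedge\theta^{2r+1}$. The hypothesis $\Lie_\Gamma f=0$ for all $\Gamma\in\K$ says $\d_xf\in\K^\circ_x$. Then $\d f\wedge\tau\wedge\omega^r$ is a $(2r+2)$-form built from covectors in a $(2r+1)$-dimensional space, so it vanishes.

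The main technical point is justifying $\omega_x\in\bigwedge^2\K^\circ_x$ and $\tau_x\in\K^\circ_x$. The second is immediate. For the first, complete a basis of $\K_x^\circ$ to a basis of $\T_x^*M$ with dual vectors whose last ones span $\K_x$. Any component of $\omega_x$ involving a dual covector of a $\K_x$ vector equals a contraction of $\omega_x$ with that vector, which is zero. This gives the claim, and the proposition then follows by combining the two directions with the preceding proposition.
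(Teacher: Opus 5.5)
Your proof is correct, but it takes a different route from the paper's.

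\textbf{The paper's route.} The paper keeps the Reeb formulation \eqref{maintain_odd_class} of the preceding proposition and rewrites it as $\bigl((\inn{R}\d f)\tau-\d f\bigr)\wedge\omega^r=0$. It then applies the pointwise Lepage-type Lemma~\ref{lem:lepage} and Theorem~\ref{thm:class-parity} to the auxiliary pair $\bigl((\inn{R}\d f)\tau-\d f,\,\omega\bigr)$. This turns the condition into $(\inn{R}\d f)\tau-\d f\in(\Ker\omega)^\circ$. Finally it passes from $\Ker\omega$ down to $\K_{(\tau,\omega)}$, using that $(\inn{\Gamma}\tau)R-\Gamma$ sweeps out all of $\K_{(\tau,\omega)}$ as $\Gamma$ ranges over $\Ker\omega$.

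\textbf{Your route.} You work with the Reeb-free identity $\d f\wedge\tau\wedge\omega^r=0$ and one linear-algebra fact: $\tau_x\wedge\omega_x^r$ is a nonzero element of the line $\bigwedge^{2r+1}\K^\circ_x$. Hence wedging a covector with it gives zero exactly when the covector lies in $\K^\circ_x$. Your contraction argument gives one direction and degree counting gives the other.

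\textbf{Comparison.} Your argument is more self-contained and shows why the characteristic distribution, not $\Ker\omega$, appears. It needs neither Reeb vector fields nor the parity theorem. It also yields the more general statement
\[
\{\alpha\in\T^*_xM \mid \alpha\wedge\tau_x\wedge\omega_x^r=0\}=\K^\circ_x .
\]
The paper's argument instead reuses the machinery of Section~\ref{ch_2} and keeps the explicit link with the Reeb condition.

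\textbf{A simplification.} Your detour through the previous proposition is unnecessary. Lemma~\ref{lemma_formula_ef_times_form}, equation~\eqref{eq:after_lemma_eq} and Proposition~\ref{conditions_odd_class} show directly that the transformed pair is a doublet of class $2r+1$ if and only if $\d f\wedge\tau\wedge\omega^r=0$.
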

\begin{proof}
    Observe that we can rewrite equation~\eqref{maintain_odd_class}, as
$$((\inn{R}\d f) \tau -\d f)\wedge \omega ^r = 0\,.$$
As we are assuming that $\omega$ has rank $2r$, so $\omega^r \neq 0$, it is clear from Proposition~\ref{eq:even_conditions} and Theorem~\ref{thm:class-parity} that this is equivalent to

$$ 
((\inn{R} \d f)\tau - \d f ) \in (\Ker \omega )^{\circ}\,,
$$
for all Reeb vector fields $R \in \X(M)$ of the doublet $(\tau ,\omega)$.

 By definition of annihilator, this is equivalent to
$$(\inn{R} \d f)(\inn{\Gamma}\tau) - \inn{\Gamma} \d f= ((\inn{\Gamma} \tau)R-\Gamma)(f) = 0\,,$$
for all $\Gamma \in \Ker \omega$ and all Reeb vector fields $R \in \X(M)$. Last, note that 
$$(\inn{\Gamma} \tau ) R - \Gamma \in \K_{(\tau , \omega)} = \Ker \tau \cap \Ker \omega\,,$$
and also, clearly, every element of $X \in \K_{(\tau , \omega)}$ can be written in the form $(\inn{\Gamma}\tau) R -\Gamma$, with $\Gamma = -X$.
\end{proof}

As a direct consequence of this proposition and the Frobenius theorem,
we can state the following result,
which is analogous to Proposition~\ref{sufficient_cond_function}.
\begin{proposition}
Let $(\tau,\omega)$ be a doublet of class $2r+1$. 
If the characteristic distribution 
$\K _{(\tau,\omega)} = \Ker \tau \cap \Ker \omega$ 
is involutive, then, for any point $x\in M$, 
there exists a neighbourhood $V$ of $x$ and 
a nowhere-vanishing function $g\in \Cinfty(V)$ 
such that the pair $(g\tau,\d g\wedge \tau + g\omega)$ 
has constant class $2r+1$ on $V$.
\end{proposition}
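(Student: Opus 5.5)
Writing $g=e^f$, so that $\d g\wedge\tau+g\omega=e^f(\d f\wedge\tau+\omega)$, the statement reduces to the local existence of a function $f$ with $\Lie_\Gamma f=0$ for every $\Gamma\in\K_{(\tau,\omega)}$. Once $f$ is found on a neighbourhood $V$, Proposition~\ref{odd_class_stay}, applied to the restricted doublet on $V$, shows that $(e^f\tau,e^f(\d f\wedge\tau+\omega))$ is a doublet of class $2r+1$ on $V$. Since $g=e^f$ is positive, it is nowhere-vanishing.

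To construct $f$, I first note that $\K_{(\tau,\omega)}$ is a regular distribution. The pair is a doublet, so its class is constant and equal to $2r+1$, and $\K_{(\tau,\omega)}$ has constant rank $m-2r-1$. It is therefore a smooth subbundle, because it is the kernel of the constant-rank morphism $\widehat B$ from Proposition~\ref{prop:kernel_B}. By hypothesis it is involutive, so Frobenius' theorem gives, around any $x\in M$, a chart $(x^1,\dotsc,x^m)$ on a neighbourhood $V$ with
\[
\K_{(\tau,\omega)}|_V=\Big\langle \fracp{}{x^1},\dotsc,\fracp{}{x^{\mu}}\Big\rangle,\qquad \mu=m-2r-1 .
\]
Any smooth function of the transverse coordinates $x^{\mu+1},\dotsc,x^m$ alone is annihilated by every local section of $\K_{(\tau,\omega)}$. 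The simplest choice is $f=0$, that is $g=1$, which is trivially admissible. A nontrivial choice such as $f=x^m$ also works when $\mu<m$.

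The only delicate point is that Proposition~\ref{odd_class_stay} is stated globally, for vector fields and functions on $M$, so I must check that it can be applied to the restriction to $V$. Its proof is pointwise algebra together with Proposition~\ref{conditions_odd_class}, and both remain valid on the open submanifold $V$. Here $\Lie_\Gamma f=0$ is understood for all local sections $\Gamma$ of $\K_{(\tau,\omega)}$ over $V$, which is equivalent to $\d f$ annihilating $\K_{(\tau,\omega)}$ pointwise. With that observation the statement follows.
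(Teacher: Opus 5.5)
Your proof is correct and follows the paper's argument: the paper derives this statement as a direct consequence of Proposition~\ref{odd_class_stay} and Frobenius' theorem, taking $g=e^f$ with $f$ depending only on the transverse coordinates of a flat chart for $\K_{(\tau,\omega)}$. Your remark that $g=1$ already works is also right, so the statement as written holds trivially, and the Frobenius step serves only to produce nonconstant admissible factors $g$.
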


\addcontentsline{toc}{section}{Conclusions and outlook}
\section*{Conclusions and outlook}

In this paper we have analysed in detail the geometric properties of pairs 
$(\tau,\omega)$
formed by a differential 1-form and a 2-form.
We introduced a definition of class for such pairs 
that extends the usual notion of class of a 1-form. 
To avoid degeneracies arising when the class varies, 
we mostly confined our investigation to pairs with constant class.
We showed how the parity of the class characterizes the existence of some distinguished vector fields, such as Reeb or Liouville vector fields. 
These objects turn out to be instrumental to understand the geometric structures associated with a pair $(\tau,\omega)$, 
such as the characteristic tensor, the extended characteristic distribution, and the extended 3-form. 

To demonstrate the unifying power of this approach, 
we showed how several standard geometric structures fit naturally into this framework. 
Specifically, we recovered the properties of contact, (pre)symplectic, and (pre)cosymplectic structures, as well as locally conformally symplectic manifolds and Hamiltonian systems.

We applied these results to 
clarify the notion of precontact form
and to define Hamiltonian dynamics associated with such structure. 
Finally, we investigated how conformal transformations 
may preserve or change the parity of the class of pairs $(\tau,\omega)$. 
We characterized these changes in terms of the associated characteristic distribution and the Liouville vector fields. 

We plan to apply this general framework to the study of singular Lagrangians, 
both for action-dependent and non-autonomous systems. 
It would also be interesting to analyse in detail different situations where the class is not constant, 
and how this may alter the behaviour of the associated geometrical objects.

\addcontentsline{toc}{section}{Acknowledgements}
\section*{Acknowledgments}

AMM acknowledges financial support from a predoctoral contract funded by Universitat Rovira i Virgili under grant 2025PMF-PIPF-14.
AMM and XRG acknowledge partial financial support from the Spanish Ministry of Science and Innovation grant RED2022-134301-T of AEI.

\bibliographystyle{abbrv}
{\small
\bibliography{references.bib}
}
\end{document}